\def\myMRbibitem{\@ifnextchar[\my@lbibitem\my@bibitem}
\def\mybiblabel#1#2{\@biblabel{{\hyperref{http://www.ams.org/mathscinet-getitem?mr=#1}{}{}{#2}}}}
\def\myhyperanchor#1{\Hy@raisedlink{\hyper@anchorstart{cite.#1}\hyper@anchorend}}
\def\my@lbibitem[#1]#2#3#4\par{%
    \item[\mybiblabel{#2}{#1}\myhyperanchor{#3}\hfill]#4%
    \@ifundefined{ifbackrefparscan}{}{\BR@backref{#3}}%
    \if@filesw{\let\protect\noexpand\immediate
       \write\@auxout{\string\bibcite{#3}{#1}}}\fi\ignorespaces%
}
\def\my@bibitem#1#2#3\par{%
    \refstepcounter\@listctr
    \item[\mybiblabel{#1}{\the\value\@listctr}\myhyperanchor{#2}\hfill]#3%
    \@ifundefined{ifbackrefparscan}{}{\BR@backref{#2}}%
    \if@filesw\immediate\write\@auxout
        {\string\bibcite{#2}{\the\value\@listctr}}\fi\ignorespaces%
}
\renewcommand*{\backref}[1]{}
\renewcommand*{\backrefalt}[4]{\quad \tiny
    \ifcase #1 (NOT CITED.)%
    \or        (Cited on page~#2.)%
    \else      (Cited on pages~#2.)%
    \fi}
\declaretheoremstyle[
headfont=\normalfont\itshape,
bodyfont=\normalfont,
qed=\ensuremath{\triangleleft}
]{myremark}
\declaretheorem[name=Theorem]{maintheorem}
\declaretheorem[numberwithin=section]{theorem}
\declaretheorem[sibling=theorem]{lemma}
\declaretheorem[sibling=theorem]{corollary}
\declaretheorem[sibling=theorem]{proposition}
\declaretheorem[sibling=theorem, style=remark]{claim}
\declaretheorem[sibling=theorem, style=myremark]{remark}
\declaretheorem[numbered=no, style=remark, name=Acknowledgements]{ack}
\DeclareFontFamily{U} {MnSymbolA}{}
\DeclareFontShape{U}{MnSymbolA}{m}{n}{
   <-6> MnSymbolA5
   <6-7> MnSymbolA6
   <7-8> MnSymbolA7
   <8-9> MnSymbolA8
   <9-10> MnSymbolA9
   <10-12> MnSymbolA10
   <12-> MnSymbolA12}{}
\DeclareFontShape{U}{MnSymbolA}{b}{n}{
   <-6> MnSymbolA-Bold5
   <6-7> MnSymbolA-Bold6
   <7-8> MnSymbolA-Bold7
   <8-9> MnSymbolA-Bold8
   <9-10> MnSymbolA-Bold9
   <10-12> MnSymbolA-Bold10
   <12-> MnSymbolA-Bold12}{}
\DeclareSymbolFont{MnSyA} {U} {MnSymbolA}{m}{n}
\DeclareMathSymbol{\top}{\mathord}{MnSyA}{219}
\DeclareMathSymbol{\bot}{\mathord}{MnSyA}{217}
\numberwithin{equation}{section}         
\newcommand{\C}{\mathbb{C}}
\newcommand{\R}{\mathbb{R}}
\newcommand{\Z}{\mathbb{Z}}
\renewcommand{\P}{\mathbb{P}}
\newcommand{\B}{\mathbb{B}}
\newcommand{\K}{\mathbb{K}}
\newcommand{\D}{\mathbb{D}}
\newcommand{\cC}{\mathcal{C}}
\newcommand{\cE}{\mathcal{E}}
\newcommand{\cH}{\mathcal{H}}\newcommand{\cI}{\mathcal{I}}
\newcommand{\cM}{\mathcal{M}}\newcommand{\cN}{\mathcal{N}}
\newcommand{\cU}{\mathcal{U}}
\newcommand{\cV}{\mathcal{V}}
\newcommand{\GL}{\mathrm{GL}}
\newcommand{\SL}{\mathrm{SL}}
\newcommand{\sA}{\mathsf{A}}
\newcommand{\fm}{\mathfrak{m}}
\DeclareMathOperator{\supp}{supp}
\DeclareMathOperator{\Int}{int}
\DeclareMathOperator{\tr}{tr}
\newcommand{\tribar}[1]{\mathopen{| {\kern -1.5pt} | {\kern -1.5pt} |} {#1} \mathclose{| {\kern -1.5pt} | {\kern -1.5pt} |}}
\newcommand{\arxiv}[1]{Preprint \href{http://arxiv.org/abs/#1}{arXiv:{#1}}}
\renewcommand{\epsilon}{\varepsilon}
\renewcommand{\phi}{\varphi}
\renewcommand{\emptyset}{\varnothing}
\renewcommand{\setminus}{\smallsetminus} 
\renewcommand{\angle}{\measuredangle}
\newcommand{\multicone}{M}
\newcommand{\multiconedual}{M_\mathrm{co}}
\newcommand{\project}[1]{#1'}    
\begin{document}

\title[Lyapunov-optimizing measures]{The entropy of Lyapunov-optimizing measures of some matrix cocycles}

\author[J.~Bochi]{Jairo Bochi}
\address{Facultad de Matem\'aticas, Pontificia Universidad Cat\'olica de Chile, Av.~Vicu\~{n}a Mackenna 4860, Santiago, Chile}
\urladdr{\href{http://www.mat.uc.cl/~jairo.bochi}{www.mat.uc.cl/$\sim$jairo.bochi}}
\email{jairo.bochi@mat.uc.cl}

\author[M.~Rams]{Micha{\l} Rams}
\address{Institute of Mathematics, Polish Academy of Sciences, ul.~\'{S}niadeckich~8. 00-956 Warsaw, Poland}
\urladdr{\href{http://www.impan.pl/~rams}{www.impan.pl/$\sim$rams}}
\email{rams@impan.pl}

\date{March, 2016 (this version)}

\begin{abstract}
We consider one-step cocycles of $2 \times 2$ matrices, and we are interested in their Lyapunov-optimizing measures, i.e., invariant probability measures that maximize or minimize a Lyapunov exponent. If the cocycle is dominated, that is, the two Lyapunov exponents are uniformly separated along all orbits, then Lyapunov-optimizing measures always exist and are characterized by their support. Under an additional hypothesis of nonoverlapping between the cones that characterize domination, we prove that the Lyapunov-optimizing measures have zero entropy. This conclusion certainly fails without the domination assumption, even for typical one-step $\mathrm{SL}(2,\mathbb{R})$-cocycles; indeed we show that in the latter case there are measures of positive entropy with zero Lyapunov exponent.
\end{abstract}
\subjclass[2010]{15B48 (primary) 37H15, 37D30, 93C30 (secondary)}
\maketitle

\section{Introduction}

\emph{Ergodic Optimization} is concerned with the maximization or minimization
of Birkhoff averages of a given function (called the \emph{potential})
over a given dynamical system: see \cite{Jenkinson}.
A paradigm of this subject is that for sufficiently
hyperbolic base dynamics and for typical potentials, optimizing orbits should have low dynamical complexity.
This is confirmed in by a recent result by Contreras~\cite{Contreras},
who showed that the optimizing orbits with respect to generic Lipschitz potentials
over an expanding base are periodic.
An important component of Contreras' proof is the
fact previously shown by Morris \cite{Morris08} that
in this generic situation, optimizing orbits have subexponential complexity
(i.e., zero entropy).

\medskip

In this paper we are interested in Ergodic Optimization in a noncommutative setting.
We will replace Birkhoff sums by matrix products,
and the quantities we want to maximize or minimize are the associated Lyapunov exponents.
We would like to know whether the low complexity phenomena mentioned above
is also typical in this noncommutative setting.

A natural starting point is to consider \emph{one-step} matrix cocycles.
In this case, the optimization problems above can be restated
in more elementary terms: we are given finitely many square matrices,
and we want to find sequences of products of these matrices attaining
the maximum or minimum growth rate.
These maximization and minimization problems were
first considered by Rota and Strang \cite{RotaStrang} and by Gurvits \cite{Gurvits},
respectively.
The associated growth rates are called \emph{joint spectral radius}
and \emph{joint spectral subradius}, respectively;
they play an important role in Control Theory
and there is a large body of literature about them (especially about the former):
see the monograph \cite{Jungers_book} and references therein.
An important contribution to this field was made
by Bousch and Mairesse \cite{BM} who showed that the maximizing products
are not always periodic, thus disproving the so called
\emph{Finiteness Conjecture}.
(Of course, for one-step cocycles the parameter space is finite-dimensional
and thus perturbative arguments are more difficult.)

\medskip

In this paper we deal with $2 \times 2$ one-step cocycles.
We give explicit open conditions that ensure that
the Lyapunov-optimizing orbits form a set of low complexity,
more precisely of zero topological entropy.
These conditions are related to hyperbolicity on the projective space,
and are satisfied in some of the counterexamples to the Finiteness Conjecture
exhibited in the literature.

In order to substantiate the importance of the hyperbolicity hypotheses,
we also show that for typical non-hyperbolic one-step $\SL(2,\R)$-cocycles
the set of minimizing orbits has positive topological entropy.

Let us proceed with the precise definitions and results.

\subsection{Extremal Lyapunov exponents for $2 \times 2$ matrix cocycles}

Let $\Omega$ be a compact metric space and
let  $T \colon \Omega \to \Omega$ be a continuous transformation.
Let $A \colon \Omega \to \GL(2,\R)$ be a continuous map.
The pair $(T,A)$ is a called a \emph{$2 \times 2$ matrix cocycle}.
We are interested in the following products, the logarithms of which play the role of Birkhoff sums in our noncommmutative setting:
\begin{equation} \label{eq:main_products}
A^{(n)}(\omega) \coloneqq A(T^{n-1} \omega) \cdots A(\omega) , \quad
\omega \in \Omega, \ n \ge 0.
\end{equation}
The \emph{Lyapunov exponents} of the cocycle at a point $\omega \in \Omega$,
when they exist, are the limits:
\begin{equation}\label{eq:LE}
\lambda_1(A, \omega) \coloneqq \lim_{n \to +\infty} \frac{1}{n} \log \| A^{(n)}(\omega) \| \, , \quad
\lambda_2(A, \omega) \coloneqq \lim_{n \to +\infty} \frac{1}{n} \log \fm(A^{(n)}(\omega))  \, .
\end{equation}
where, for definiteness, $\|L\|$ is the Euclidian operator norm of
a matrix $L$ (i.e.\ the largest singular value of $L$), and $\fm(L) \coloneqq \|L^{-1}\|^{-1}$ is its \emph{mininorm} (i.e.\ the smallest singular value of $L$).

Let $i\in\{1,2\}$.
If $\mu$ is a $T$-invariant probability measure then
$\lambda_i(A,\omega)$ exists for $\mu$-almost every $\omega$,
we denote $\lambda_i(A,\mu) = \int \lambda_i(A,\omega) \, d\mu(\omega)$.
If $\mu$ is ergodic then $\lambda_i(A,\omega) = \lambda_i(A,\mu)$ for $\mu$-almost every $\omega$.

The \emph{maximal} (or \emph{top}) and \emph{minimal} (or \emph{bottom})
Lyapunov exponents are defined respectively as:
\begin{equation}\label{eq:extremal_lambdas}
\lambda^\top_i (A) \coloneqq \sup_{\mu \in \cM_T} \lambda_i(A,\mu) \, , \quad
\lambda^\bot_i (A) \coloneqq \inf_{\mu \in \cM_T} \lambda_i(A,\mu) \, ,
\end{equation}
where $\cM_T$ denotes the set of all $T$-invariant Borel probability measures.
These four numbers are called the \emph{extremal Lyapunov exponents} of the cocycle.

A basic question is whether the $\sup$'s and $\inf$'s that appear in
\eqref{eq:extremal_lambdas} are attained.
The answer is ``yes'' in the cases of $\lambda^\top_1$ and $\lambda^\bot_2$,
and ``not necessarily'' in the  cases of $\lambda^\top_2$ and $\lambda^\bot_1$;
see \autoref{ss:nonexistence}.
However, under the assumption of domination (that we will explain next),
all $\sup$'s and $\inf$'s in \eqref{eq:extremal_lambdas} are attained.

\subsection{Domination}

Consider a $2 \times 2$ matrix cocycle $(T,A)$ where $T$ is a homeomorphism.
Suppose that for each $\omega \in \Omega$ we are given a splitting
of $\R^2$ as the sum of two one-dimensional subspaces $e_1(\omega)$, $e_2(\omega)$.
We say that this is a \emph{dominated splitting} with respect to the cocycle $(T,A)$
if the following properties hold:
\begin{itemize}
\item equivariance:
\begin{equation}\label{eq:equivariance}
A(\omega) (e_i(\omega)) = e_i( T \omega) \quad
\text{for all $\omega \in \Omega$ and $i \in \{1,2\}$;}
\end{equation}
\item dominance:  there are constants $c>0$ and $\delta>0$ such that
\begin{equation}\label{eq:domination_via_splitting}
\frac{\| A^{(n)} (\omega) | e_1(\omega) \|}{\| A^{(n)} (\omega) | e_2(\omega) \|} \ge c e^{\delta n} \quad
\text{for all $\omega \in \Omega$ and $n \ge 1$.}
\end{equation}
\end{itemize}

An important property of dominated splittings is that they are always continuous,
that is, $e_1$ and $e_2$, viewed as maps from $\Omega$ to the projective space $\P^1$, are automatically continuous (see e.g.~\cite[\S~B.1]{BDV_book}).

We say that a cocycle is \emph{dominated} if it admits a dominated splitting.
Some authors say that the cocycle is \emph{exponentially separated}, which is perhaps a better terminology. Domination is also sometimes called \emph{projective hyperbolicity}, because it can be expressed in terms of uniform contraction and expansion on the projective space.

As shown in \cite{Yoccoz,BoGo}, a $2\times 2$ cocycle $(T,A)$
is dominated if and only if there are constants $c>0$ and $\delta>0$ such that
\begin{equation}\label{eq:domination_via_non_conformality}
\frac{\|A^{(n)} (\omega)\|}{\fm (A^{(n)} (\omega))}  \ge c e^{\delta n}
\quad \text{for all $\omega \in \Omega$ and $n \ge 0$.}
\end{equation}
Note that the LHS is a measure of ``non-conformality'' of the matrix $A^{(n)} (\omega)$.

If a cocycle is dominated then the Lyapunov exponents \eqref{eq:LE} are always distinct;
actually $\delta>0$ as in \eqref{eq:domination_via_non_conformality}
is a uniform lower bound for the gap between them.
Moreover (see \autoref{ss:preliminaries_general})
the Lyapunov exponents are given by integrals:
\begin{equation} \label{eq:exponents_as_integrals}
\lambda_i(A,\mu) = \int \phi_i \, d\mu \, , \quad \text{where }
\phi_i(\omega) \coloneqq \log \| A(\omega) | e_i(\omega) \| \, , \quad (i=1,2).
\end{equation}

As a consequence of these formulas,
the problem of maximizing or minimizing Lyapunov exponents for dominated cocycles
is equivalent to the optimization of Birkhoff averages of the continuous functions $\phi_i$,
and so many standard results apply (see \cite{Jenkinson}).\footnote{In order to avoid complications, our definition \eqref{eq:extremal_lambdas} of the extremal Lyapunov exponents only considers regular points, as the alert reader has noticed. On the other hand, non-regular points have no effect in the optimization of Birkhoff averages (see \cite{Jenkinson}). Therefore, for dominated cocycles at least, non-regular points have no effect in the optimization of Lyapunov exponents.}
In particular, one can easily show that:
\begin{align*}
\lambda^\top_i(A) &= \lim_{n \to \infty} \frac{1}{n} \sup_{\omega \in \Omega} \log \| A^{(n)} (\omega) | e_i(\omega) \| \, , \\
\lambda^\bot_i(A) &= \lim_{n \to \infty} \frac{1}{n} \inf_{\omega \in \Omega} \log \| A^{(n)} (\omega) | e_i(\omega) \| \, .
\end{align*}
Another consequence is a fact we mentioned already: all $\sup$'s and $\inf$'s that appear in
\eqref{eq:extremal_lambdas} are attained in the dominated case.

\subsection{One-step cocycles}

Fix an integer $k \ge 2$.
Let $\Omega = \{1,\dots,k\}^\Z$ be the space of bi-infinite words on $k$ symbols.
With some abuse of notation, we denote this set by $k^\Z$.
Let $T \colon k^\Z \to k^\Z$ be the shift transformation.

Given a $k$-tuple of matrices $\sA = (A_1,\dots,A_k) \in \GL(2,\R)^k$,
we associate with it the locally constant map
$A \colon k^\Z \to \GL(d,\R)$ given by $A(\omega) = A_{\omega_0}$.
In this case, $(T,A)$ is called a \emph{one-step cocycle},
and the products \eqref{eq:main_products} are simply
$$
A^{(n)} (\omega) = A_{\omega_{n-1}} \cdots A_{\omega_0} \, .
$$
The $k$-tuple of matrices $\sA$ is called the \emph{generator} of the cocycle.
We denote $\lambda^\top_i(\sA) \coloneqq \lambda^\top_i(A)$ and $\lambda^\bot_i(\sA) \coloneqq \lambda^\bot_i(A)$.

We remark that for one-step cocycles, the values $\lambda_1^\top(\sA)$ and $\lambda_1^\bot(\sA)$ can be
alternatively defined in a more elementary way
(without speaking of measures) as:
\begin{align}
\lambda_1^\top(\sA) &= \lim_{n\to \infty} \frac{1}{n} \log \sup_{i_1, \dots, i_n} \| A_{i_n} \dots A_{i_1} \| \, , \label{eq:alternative_top}\\
\lambda_1^\bot(\sA) &= \lim_{n\to \infty} \frac{1}{n} \log \inf_{i_1, \dots, i_n} \| A_{i_n} \dots A_{i_1} \| \label{eq:alternative_bot}
\end{align}
(see \autoref{ss:alternative}).

The numbers $\varrho^\top(\sA) \coloneqq e^{\lambda^\top_1(\sA)}$ and $\varrho^\bot(\sA) \coloneqq e^{\lambda^\bot_1(\sA)}$
are called \emph{joint spectral radius} and \emph{joint spectral subradius}
and constitute an active topic of research: see \cite{Jungers_book}.
Let us remark that that the joint spectral radius is always a continuous function of the matrices, while the joint spectral subradius is not; in fact discontinuities of the latter are related to a lack of domination: see \cite{BMo}.

\subsection{Domination for one-step $\GL(2,\R)$ cocycles}

A one-step cocycle $(T,A)$ is dominated if and only if
the number
\begin{equation}\label{eq:diff_bot}
(\lambda_1-\lambda_2)^\bot(A) \coloneqq  \inf_{\mu \in \cM_T} (\lambda_1(A,\mu) - \lambda_2(A,\mu))
\end{equation}
is positive; see \autoref{ss:alternative} for the (easy) proof.
Let us see still another characterization
of domination for one-step cocycles.

\medskip

The \emph{standard symmetric cone} in $\R^2_* \coloneqq \R^2 \setminus \{0\}$ is
$$
C_+ \coloneqq \{ (x,y) \in \R^2_* ; \; xy \ge 0 \}
$$
A \emph{cone} in $\R^2_*$ is an image of $C_+$ by a linear isomorphism.
A \emph{multicone} in $\R^2_*$  is a disjoint union of finitely many cones.

We say that a multicone $\multicone \subset \R^2_*$ is \emph{strictly forward-invariant}
with respect to $\sA = (A_1, \dots, A_k)$ if
the image multicone $\bigcup_i A_i(\multicone)$
is contained in the interior of~$\multicone$.

For example, if the $A_i$'s have positive entries then the
standard symmetric cone $C_+$
is a strictly forward-invariant multicone for $(A_1, \dots, A_k)$.
For more complicated examples, see \cite{ABY}.

It was proved in \cite{ABY,BoGo} that
\emph{the one-step cocycle generated by $\sA$ is dominated
if and only if $\sA$ has a strictly forward-invariant multicone.}

If $\multicone$ is a multicone, its \emph{complementary multicone} $\multiconedual$ is defined as
the closure (relative to $\R^2_*$) of $\R^2_* \setminus \multicone$.
Notice that if $\multicone$ is strictly forward-invariant with respect to $(A_1, \dots, A_k)$
then $\multiconedual$ is \emph{strictly backwards-invariant}, that is, strictly forward-invariant with respect to
$(A_1^{-1}, \dots, A_k^{-1})$.

\subsection{Mather sets}\label{ss:Mather}

Under the assumptions above, the extremal Lyapunov exponents
``live'' in certain invariant sets:

\begin{maintheorem}\label{t:Mather}
Suppose that the one-step cocycle generated by $\sA \in \GL(2,\R)^k$ is dominated.
For each $\star \in \{\top, \bot\}$,
let $K^\star$ be the union of all supports of measures $\mu \in \cM_T$ such that
$\lambda(\mu) = \lambda_1^\star$.
Then:
\begin{itemize}
	\item $K^\star$ is a compact, nonempty, $T$-invariant set;
	\item any measure $\mu \in \cM_T$ supported in $K^\star$ satisfies $\lambda(\mu) = \lambda^\star$.
\end{itemize}
\end{maintheorem}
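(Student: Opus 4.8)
The plan is to reduce the statement about Lyapunov exponents to a statement about Birkhoff averages of the continuous potential $\phi_1$, using the integral formula \eqref{eq:exponents_as_integrals} valid for dominated cocycles, and then invoke the classical theory of Mather sets (maximizing/minimizing sets) for Birkhoff averages. Fix $\star = \top$ for concreteness; the case $\star = \bot$ is symmetric (or follows by replacing $\phi_1$ with $-\phi_1$, noting that $\cM_T$ is unchanged). By \eqref{eq:exponents_as_integrals}, for every $\mu \in \cM_T$ we have $\lambda_1(A,\mu) = \int \phi_1 \, d\mu$, so $\lambda_1^\top(\sA) = \sup_{\mu \in \cM_T} \int \phi_1 \, d\mu$, and the measures realizing the supremum are exactly the $\phi_1$-maximizing measures. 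Since $\Omega = k^\Z$ is compact and $\phi_1$ is continuous, the map $\mu \mapsto \int \phi_1\, d\mu$ is upper semicontinuous on the (weak-$*$ compact) set $\cM_T$, so the set $\cM_T^{\max}$ of maximizing measures is nonempty, and it is clearly convex and weak-$*$ compact.

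Next I would show $K^\top$ is compact, nonempty and $T$-invariant. Nonemptiness is immediate since $\cM_T^{\max} \ne \emptyset$ and any $\mu$ in it has nonempty support contained in $K^\top$. Invariance: each $\supp\mu$ is $T$-invariant (indeed $T$-bi-invariant, as $\mu$ is invariant and $T$ is a homeomorphism), hence so is the union. The key point is compactness, equivalently closedness of $K^\top$. For this I would use the standard characterization: a point $\omega$ lies in $\bigcup_{\mu \in \cM_T^{\max}} \supp\mu$ if and only if every neighborhood of $\omega$ has positive measure for some maximizing measure. Given a sequence $\omega^{(m)} \to \omega$ with each $\omega^{(m)} \in \supp\mu_m$, $\mu_m \in \cM_T^{\max}$, pass to a weak-$*$ convergent subsequence $\mu_m \to \mu$; by upper semicontinuity $\mu \in \cM_T^{\max}$, and a routine argument (for any open $U \ni \omega$, eventually $\omega^{(m)} \in U$, so $\mu_m(U) > 0$; using the portmanteau theorem with a slightly smaller open set one gets $\mu(U) > 0$) shows $\omega \in \supp\mu \subset K^\top$. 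Hence $K^\top$ is closed, and being a subset of the compact space $\Omega$, it is compact.

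For the second bullet, suppose $\mu \in \cM_T$ is supported in $K^\top$; I must show $\lambda_1(A,\mu) = \lambda_1^\top$, i.e.\ $\int \phi_1 \, d\mu = \max \int \phi_1 \, d\nu$. The clean way is to invoke the well-known fact from ergodic optimization (see \cite{Jenkinson}) that the union $K^\top = \bigcup_{\nu \in \cM_T^{\max}} \supp\nu$ — the \emph{Mather set} of $\phi_1$ — has the property that \emph{every} invariant measure carried by it is maximizing; this in turn rests on the subadditive/Atkinson-type argument that $\phi_1 \le \lambda_1^\top$ "on average along $K^\top$", more precisely that there is a continuous coboundary correction making $\phi_1 - \lambda_1^\top \le 0$ on a neighborhood and $= 0$ on $K^\top$ after subtracting $u\circ T - u$ — but since the excerpt permits citing standard results of ergodic optimization, I would simply cite this. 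Alternatively, for a self-contained argument: by the ergodic decomposition it suffices to treat $\mu$ ergodic; then by Birkhoff's theorem $\mu$-a.e.\ $\omega$ has $\frac1n S_n\phi_1(\omega) \to \int\phi_1\,d\mu$, while the defining property of points in $K^\top$ (being generic for limits of maximizing measures) forces this average not to be strictly less than $\lambda_1^\top$; combined with $\int\phi_1\,d\mu \le \lambda_1^\top$ by definition of the sup, equality follows.

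I expect the main obstacle to be the second bullet, specifically establishing rigorously that invariant measures supported on the Mather set are automatically optimizing — this is the nontrivial input and is where one genuinely uses that the optimization is over a Birkhoff average of a continuous function rather than some wilder functional. The translation via \eqref{eq:exponents_as_integrals} is what makes this applicable; without domination the formula fails and indeed (as the abstract notes) the conclusion itself fails.
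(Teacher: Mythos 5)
Your reduction to ergodic optimization of the continuous potential $\phi_1$ via \eqref{eq:exponents_as_integrals} is the right starting point (it is the alternative route the paper itself mentions), but both bullets have genuine gaps as you have argued them. For the compactness of $K^\top$: your sequential argument does not work, because the portmanteau inequality for open sets reads $\mu(U)\le\liminf_m\mu_m(U)$, which is the wrong direction for your purpose; knowing $\mu_m(U)>0$ for every $m$ gives no lower bound on $\mu(U)$. Concretely, a limit of points of $\supp\mu_m$ need not lie in $\supp(\lim\mu_m)$: take $\mu_m=(1-\tfrac1m)\delta_{1^\infty}+\tfrac1m\delta_{2^\infty}$, so that $2^\infty\in\supp\mu_m$ for all $m$ while $\mu_m\to\delta_{1^\infty}$. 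The correct argument (the one the paper uses, following Morris) is to take a countable weak-$*$ dense sequence $(\nu_n)$ in the compact set of optimizing measures and put $\nu=\sum_n 2^{-n}\nu_n$; then $\nu$ is itself optimizing and $\supp\nu=K^\top$, which exhibits $K^\top$ as the (closed) support of a single measure.

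The more serious gap is the second bullet. The assertion ``every invariant measure carried by the Mather set is maximizing'' is \emph{not} a theorem about merely continuous potentials: there exist continuous functions on the full shift whose unique maximizing measure is fully supported (any fully supported ergodic measure can be realized as the unique maximizing measure of some $\phi\in C^0$), in which case the union of supports is all of $k^\Z$ and almost no invariant measure is maximizing. The subordination-type statement you want rests on a sub-action $u$ with $\phi_1\le\lambda_1^\top+u\circ T-u$ everywhere and equality on the optimizing orbits, and the Ma\~n\'e--Conze--Guivarc'h lemma producing such a $u$ requires H\"older (or Walters) regularity of $\phi_1$, not mere continuity. That regularity does hold here, because the dominated splitting --- hence $\phi_1$ --- is H\"older, but you neither prove nor cite this; it is precisely the missing input. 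The paper avoids the issue by constructing the Barabanov function $p^\star$ directly, which yields a continuous $\psi^\star$ with $\int\psi^\star\,d\mu=\lambda_1(\mu)$ for all $\mu\in\cM_T$ and $\psi^\top\le\lambda_1^\top$ pointwise; both bullets then follow immediately. Your fallback ``self-contained'' argument also fails at the same spot: membership in the support of a maximizing measure does not constrain the Birkhoff average of $\phi_1$ at that point, since supports of ergodic measures contain points with arbitrary (or nonexistent) Birkhoff limits.
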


In particular, $\lambda_1$-optimizing measures exist.

We call $K^\top$ and $K^\bot$ \emph{upper} and \emph{lower Mather sets}, respectively.
Our upper Mather set corresponds to what Morris~\cite{Morris13} calls a Mather set.
The terminology is coherent with Lagrangian Dynamics,
where Mather sets were first studied in \cite{Mather}.
The existence of Mather sets is related to the Bousch's ``subordination principle'': see \cite{Morris07} for the commutative context, and \cite{CZ} for the subadditive context.

Actually the existence of the upper Mather set is guaranteed
for $1$-step cocycles (in any dimension) without assumptions of domination: see \cite{Morris13}.

The existence of both Mather sets in \autoref{t:Mather}
can be deduced from H\"older continuity of the Oseledets directions
using the usual (commutative) ergodic optimization theory.
However, the proof of \autoref{t:Mather} that we will present
is self-contained and gives extra information which will be useful
in the proof of our major result, \autoref{t:zero} below.

\subsection{Zero entropy}\label{ss:zero}

We say that $\sA = (A_1,\dots,A_k)$ satisfies the \emph{forward NOC (non-overlapping condition)}
if it has a strictly forward-invariant multicone $\multicone \subset \R^2_*$ such that
$$
A_i (\multicone) \cap A_j (\multicone) = \emptyset \quad \text{whenever } i \neq j \, .
$$
We say that  $\sA = (A_1,\dots,A_k)$ satisfies the \emph{backwards NOC}
if $(A_1^{-1}, \dots, A_k^{-1})$ satisfies the forward NOC.
We say that $(A_1,\dots,A_k)$ satisfies the \emph{NOC} if
it satisfies both the forward and the backwards NOC.
An intrinsic characterization of these conditions will be given later (\autoref{p:NOC}).

\begin{remark}\label{r:NOCs}
The forward and the backwards NOC are not equivalent:
for example, if
$$
A_1 \coloneqq \begin{pmatrix} \alpha^{-1} & 0 \\ 0 & \alpha \end{pmatrix}, \quad
A_2 \coloneqq \begin{pmatrix} \beta^{-1}  & 0 \\ 1 & \beta   \end{pmatrix}, \quad
\text{with $\alpha>0$, $\beta>0$ sufficiently small}
$$
then $(A_1,A_2)$ satisfies the forward NOC but not the backwards NOC. 

The former statement is easy to check directly. For the latter, note that the actions of $A_1^{-1}$ and $A_2^{-1}$ on the projective space $\P^1$ have a common attracting fixed point, namely the line spanned by $(0,1)$. Hence, however we choose the invariant multicone $M$ for $\{A_1^{-1}, A_2^{-1}\}$, it must contain this line. This implies that $A_1^{-1} M \cap A_2^{-1} M \neq \emptyset$. 
\end{remark}

The main result of this paper is the following:

\begin{maintheorem}\label{t:zero}
For every $k$ and every $\sA \in \GL(2,\R)^k$,
if the one-step cocycle generated by $\sA$ is dominated
and satisfies the NOC then
the restriction of the shift map to either Mather set $K^\top$ or $K^\bot$
has zero topological entropy.
\end{maintheorem}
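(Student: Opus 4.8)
The plan is to exhibit a topological conjugacy (or at least a semiconjugacy with uniformly bounded fibers, which suffices for entropy) between the shift restricted to a Mather set and a subshift on an alphabet whose structure forces zero entropy. By symmetry — replacing $\sA$ by $(A_1^{-1},\dots,A_k^{-1})$ interchanges $\top$ and $\bot$ and exchanges the forward and backwards NOC — it is enough to treat $K^\top$ under the forward NOC together with domination. Fix a forward-invariant multicone $\multicone$ witnessing both domination and the forward NOC. The key geometric input is that for each $\omega \in \Omega$ the equivariant direction $e_1(\omega)$ (the expanding Oseledets line) lies in $\multicone$, and more precisely $e_1(\omega) = \bigcap_{n\ge 1} A^{(n)}(T^{-n}\omega)(\multicone)$, a nested intersection of cones whose diameters in $\P^1$ shrink exponentially by domination. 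Thus $e_1(\omega)$ depends only on the past $\dots\omega_{-2}\omega_{-1}$, continuously, and knowing which cone $A_{\omega_{-1}}(\multicone)$ the direction $e_1(T\omega)$ falls into already pins down $\omega_{-1}$, because the images $A_i(\multicone)$ are pairwise disjoint by the NOC.

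First I would make precise the extra information coming out of \autoref{t:Mather}: I expect its proof to identify $K^\top$ with the set of $\omega$ whose forward itinerary is, at every step, "$\phi_1$-maximizing", i.e. a point lies in $K^\top$ iff along its orbit the one-sided sums of $\phi_1 = \log\|A(\cdot)|e_1(\cdot)\|$ stay within a bounded distance of the maximal growth. Second, I would use the cone picture to define a coding map. Consider the map sending $\omega \mapsto e_1(\omega) \in \P^1$; its image is a compact subset $\Lambda \subset \multicone/\!\sim$, and the NOC gives a well-defined "which-cone" partition of $\Lambda$ into $k$ clopen pieces $\Lambda_1,\dots,\Lambda_k$ (where $\Lambda_i$ consists of directions lying in $A_i(\multicone)$ — but note $e_1(\omega)\in A_{\omega_{-1}}(\multicone)$, so this reads off $\omega_{-1}$). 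Iterating, the past $(\dots,\omega_{-2},\omega_{-1})$ is recovered from $e_1(\omega)$, and the forward orbit $e_1(T^n\omega)$ recovers $(\omega_0,\omega_1,\dots)$. Hence the whole symbolic sequence is determined by the orbit of $e_1(\omega)$, and the dynamics of $T$ on $K^\top$ is conjugate to the dynamics of the projective action on $\Lambda^\top := \{e_1(\omega): \omega\in K^\top\}$.

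The heart of the matter is then to show that this projective system has zero entropy, and here the maximality condition from the Mather set must do the work: domination alone does not force zero entropy (the full shift could in principle survive), but the NOC plus maximality should. The mechanism I would push is that the forward NOC makes the cones $A_i(\multicone)$ not merely disjoint but separated, so on $\Lambda^\top$ the "renormalized" induced dynamics is an expanding map with a Markov partition into $k$ pieces, and the subset realized by maximizing orbits is the Mather/maximizing set for the Birkhoff optimization of $\phi_1$ over that expanding system. At this point I would invoke the commutative ergodic-optimization machinery (as in \cite{Morris08}, \cite{Jenkinson}): maximizing sets for the combination of an expanding map with a separation/non-overlapping hypothesis on the coding partition have zero entropy, because the non-overlapping forces the maximizing set to project injectively in a way incompatible with exponential growth of $(n,\epsilon)$-separated sets. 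Concretely, I would estimate $N(n,\epsilon)$, the number of length-$n$ words appearing in $\Lambda^\top$, and argue that the NOC forces each such word to be determined (up to bounded ambiguity) by the location of a single point $e_1$ under $n$ iterates of a map whose orbits that stay in $\Lambda^\top$ form a set of topological entropy zero — equivalently, that $\Lambda^\top$ supports no horseshoe.

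The step I expect to be the main obstacle is precisely this last one: translating "NOC + maximality" into "no horseshoe / zero entropy" rigorously. Disjointness of the $A_i(\multicone)$ is easy to use for \emph{coding}; what is delicate is ruling out that the maximizing set $K^\top$ itself contains a sub-full-shift on two words $u, v$ of equal $\phi_1$-sum. I would attack this by a compactness/uniqueness argument: if two distinct finite words $u,v$ of the same length both appear in $K^\top$ with the same value of the partial sum of $\phi_1$, then concatenations $u^{\infty}$, $v^{\infty}$, and mixed sequences all carry $\lambda_1$-maximizing measures; but the NOC says the cones $A_u(\multicone)$ and $A_v(\multicone)$ are disjoint, which combined with domination forces the corresponding periodic $e_1$-directions to be distinct and their cone-images to be nested strictly inside $\multicone$ in a way that makes the $\phi_1$-value strictly submaximal unless the itinerary is eventually unique — contradicting the presence of a $2$-symbol free choice. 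Making this dichotomy airtight (strict inequality from strict nesting, uniform over the compact set of admissible configurations) is where the real work lies; once it is in place, zero entropy of $K^\top$, and by the symmetry argument of $K^\bot$, follows.
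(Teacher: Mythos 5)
Your coding observations are correct and do appear in the paper (the forward NOC makes $e_1$ injective as a function of the past, the backwards NOC makes $e_2$ injective as a function of the future), but the proposal has two genuine gaps. First, the opening symmetry reduction is wrong: replacing $\sA$ by $(A_1^{-1},\dots,A_k^{-1})$ (and $T$ by $T^{-1}$) turns $\lambda_1$ into $-\lambda_2$, so it exchanges $\lambda_1$-maximization with $\lambda_2$-minimization, not with $\lambda_1$-minimization; there is no symmetry interchanging $K^\top$ and $K^\bot$, and indeed the paper must treat the two cases by genuinely different geometric arguments. Relatedly, \emph{both} NOCs are needed for each Mather set (one to read the past off $e_1$, the other to read the future off $e_2$), whereas you use only the forward NOC for $K^\top$.

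Second, and more seriously, the step you yourself flag as the main obstacle --- converting ``NOC $+$ maximality'' into zero entropy --- is not actually carried out, and the mechanisms you propose do not work. There is no ``commutative ergodic-optimization machinery'' asserting that maximizing sets of expanding maps with separated Markov partitions have zero entropy: Morris's theorem is about \emph{generic} potentials, and for the specific potential $\phi_1$ arising here nothing of the sort is automatic. Your fallback --- ruling out a sub-full-shift on two words of equal $\phi_1$-sum via ``strict nesting forces strict submaximality'' --- is unsubstantiated, uses only the $e_1$-cones, and in any case would be insufficient: a subshift can have positive entropy without containing an embedded full $2$-shift (e.g.\ positive-entropy minimal subshifts). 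The paper's actual mechanism is different and essentially couples the \emph{two} invariant directions: a ``Barabanov function'' $p^\star$ on the multicone (built by a Schauder fixed-point argument) yields, for pairs of points of $K^\star$, a cross-ratio inequality on the quadruple $(e_1(\xi),e_1(\eta);e_2(\xi),e_2(\eta))$, which forbids the coparallel (resp.\ crossing) configuration of the associated hyperbolic geodesics; a disjoint-intervals (resp.\ disjoint-ideal-triangles) packing argument then shows that $e_1$ determines $e_2$ outside a countable set of directions, and the two NOCs upgrade this to ``the past determines the future $\mu$-a.e.'', which gives $h_\mu(T)=0$ by a conditional-entropy computation. None of this Barabanov/cross-ratio input is present in your outline, so the central step of the proof is missing.
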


The conclusion of the theorem means that for each $\star \in \{ \top, \bot\}$,
the number $w^\star (\ell)$  of words of length $\ell$ in the alphabet $\{1, \dots, k\}$
that can be extended to a bi-infinite word in
the Mather set $K^\star$
is a subexponential function of $\ell$, that is,
\begin{equation}\label{eq:subexp}
\lim_{\ell \to \infty} \frac{1}{\ell} \log w^\star(\ell) = 0.
\end{equation}
(see \cite[p.~265--266]{Petersen}).

There are examples where \autoref{t:zero}
applies and $K^\top$ is non-discrete:
In the family of examples given in \cite{BM} where a maximizing measure is Sturmian non-periodic,
the NOC condition holds for some choices of the parameters.\footnote{See \cite{JP} and references therein for more examples of non-periodic Lyapunov-maximizing measures.}

There are also examples where \autoref{t:zero}
applies and either $K^\top$ or $K^\bot$ is not uniquely ergodic: see \autoref{ss:non_uniqueness}.

\subsection{Positive entropy}

As a counterpoint to \autoref{t:zero},
we will exhibit non-trivial situations where
$\lambda_1$-minimizing measures with \emph{positive} entropy exist.

A cocycle $(T,A)$ is called \emph{uniformly hyperbolic} if
it has an equivariant splitting into two subbundles, one being uniformly expanding
and the other being uniformly contracting.
Any uniformly hyperbolic cocycle is dominated,
and the converse holds for $\SL(2,\R)$-cocycles.

\begin{maintheorem}\label{t:positive}
Fix $k \ge 2$ and let $T$ be the full shift in $k$ symbols.
There exists an open and dense subset $\cU$ of $\SL(2,\R)^k$
such that for every $\sA \in \cU$,
\begin{enumerate}
\item either the one-step cocycle over $T$ generated by $\sA$
is uniformly hyperbolic;
\item
or there exists a compact $T$-invariant set $K \subset k^\Z$
of positive topological entropy and such that the norms $\|A^{(n)}(\omega)\|$
are uniformly bounded over $(\omega,n) \in K \times \Z$.
\end{enumerate}
\end{maintheorem}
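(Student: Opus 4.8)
The plan is to split $\SL(2,\R)^k$ according to the value of the joint spectral subradius $\varrho^\bot(\sA) = e^{\lambda_1^\bot(\sA)}$, which for $\SL(2,\R)$ matrices is always $\le 1$ since $\fm(L) \le 1 \le \|L\|$ for $L \in \SL(2,\R)$. The open set $\cU$ will be the union of the interior $\cU_1$ of the uniformly hyperbolic locus together with an open set $\cU_2$ on which we can produce the invariant set $K$ of alternative~(ii). The hyperbolic locus is already open, so the real work is to show that outside its closure one lands, after an arbitrarily small perturbation, in $\cU_2$; density of $\cU = \cU_1 \cup \cU_2$ then follows.

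First I would treat the non-hyperbolic side. If $\sA$ is \emph{not} uniformly hyperbolic, then (for $\SL(2,\R)$-cocycles) it is not dominated, so by the characterization recalled in the excerpt, $(\lambda_1 - \lambda_2)^\bot(\sA) = 0$, i.e. there are invariant measures with Lyapunov exponent gap arbitrarily close to zero; equivalently $\lambda_1^\bot(\sA) = 0$ (both exponents tend to $0$ together since $\lambda_1 + \lambda_2 = 0$). The elementary formula \eqref{eq:alternative_bot} then gives, for every $n$, products $A_{i_n}\cdots A_{i_1}$ of norm $e^{o(n)}$. The heart of the argument is to upgrade this to a \emph{uniformly bounded} family indexed by a positive-entropy subshift, after a perturbation. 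The idea: pick a long word $u = (i_1, \dots, i_m)$ whose product $B_u \coloneqq A_{i_m}\cdots A_{i_1}$ has norm close to $1$; by a small perturbation of the matrices I can arrange that $B_u$ is \emph{elliptic} (conjugate to a rotation) — indeed among matrices of near-unit norm the elliptic ones are robustly present, since $|\tr B_u| < 2$ is an open condition and can be reached by an arbitrarily small perturbation when $\|B_u\|$ is near $1$. Better still, perturb so that $B_u$ is a rational rotation, say $B_u^p = \Id$. Then every bi-infinite concatenation of blocks $u$ has bounded products. To gain entropy, I would use \emph{two} such words $u, v$ of the same length with $B_u$ and $B_v$ both finite-order elliptic; after a further small perturbation one can make the group (or semigroup) they generate still have bounded orbits — e.g. arrange $B_u, B_v$ to be simultaneously conjugate into $\mathrm{SO}(2)$, or into a common compact subgroup — so that \emph{every} word in $\{u,v\}^{\Z}$ yields $\|A^{(n)}(\omega)\|$ bounded uniformly. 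The set $K$ is then the subshift of $k^\Z$ consisting of all bi-infinite concatenations of the blocks $u$ and $v$; it is compact, shift-invariant, and carries topological entropy $\ge \tfrac{1}{m}\log 2 > 0$.

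The main obstacle is exactly the step of making the perturbed products \emph{simultaneously} bounded rather than merely each individually bounded: boundedness of $\{B_u^{a_1} B_v^{b_1} B_u^{a_2}\cdots\}$ is a joint-spectral-radius-$=1$ statement for the pair $(B_u, B_v)$, and a generic pair of elliptic $\SL(2,\R)$ matrices generates an unbounded (indeed free) group. So I must use the perturbation freedom carefully — the cleanest route is to force $B_u$ and $B_v$ to share a common invariant conformal structure, i.e. to lie in a common conjugate of $\mathrm{SO}(2)$; this is codimension-one-ish but achievable because I get to choose $u$, $v$ and perturb all of $A_1, \dots, A_k$. Concretely: perturb first so that $B_u$ is a rational rotation about some point $p \in \P^1$ (fixed points of elliptic elements), then perturb the matrices used only in $v$ (choosing $u, v$ with disjoint symbol content, using $k \ge 2$ and passing to a higher power of the alphabet if needed) so that $B_v$ is also elliptic with the same fixed point $p$; then $\langle B_u, B_v\rangle \subset $ the conjugate of $\mathrm{SO}(2)$ stabilizing $p$, which is compact. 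Finally one checks this is an open condition on $\sA$: being uniformly bounded on $K$ persists under small perturbations because $K$ is a fixed subshift and the products over it depend continuously on $\sA$ — wait, that is not automatic for infinite orbits, so instead one observes that the conjugacy of $B_u, B_v$ into $\mathrm{SO}(2)$ with the \emph{same} conjugator is not open; the correct openness comes from a different source: near such $\sA$ the cocycle is still non-hyperbolic (hyperbolicity is open, so its complement is closed, but we need an \emph{open} set in the complement). This is resolved by noting that once $K$ with bounded products exists, $0 = \lambda_1^\bot \le \lambda_1(\sA,\mu)$ for $\mu$ supported on $K$ forces $\lambda_1^\bot(\sA) = 0$, hence non-domination, hence non-hyperbolicity — and this conclusion, together with the bounded set $K$, survives in a neighborhood provided we built $K$ from a \emph{strictly} contracting configuration, i.e. arrange $B_u = B_v = \Id$ exactly (achievable: perturb so two distinct long words both give the identity matrix), for then \emph{every} word in $\{u,v\}^\Z$ has products in a fixed finite set, robustly. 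That last observation — reducing to "two distinct words equal the identity", which is visibly an open-up-to-perturbation and then locally-bounded condition — is the technical linchpin, and I would organize the write-up around it.
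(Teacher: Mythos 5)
Your overall decomposition (hyperbolic locus plus a second open set realizing alternative~(ii), with density coming from perturbing non-hyperbolic tuples) matches the paper's, and your diagnosis of the central difficulty --- that individual boundedness of $B_u$ and $B_v$ does not give joint boundedness of the semigroup they generate --- is exactly right. But your resolution of that difficulty does not work, and the gap is precisely where you locate the ``technical linchpin.''

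The condition ``$B_u=B_v=\Id$ for two fixed words $u,v$'' is a closed, positive-codimension condition on $\sA\in\SL(2,\R)^k$, not an open one; and the property you actually need --- that the products $A^{(n)}(\omega)$ are uniformly bounded over the \emph{fixed} subshift $K$ of concatenations of $u$ and $v$ --- is not open in $\sA$ either. After an arbitrarily small perturbation, $B_u=\Id+\epsilon E_1$ and $B_v=\Id+\epsilon E_2$ with $E_1\neq E_2$ generically generate a semigroup whose elements have unbounded (typically exponentially growing) norms, so the products over $K$ blow up. The same objection defeats the variant ``conjugate $B_u$ and $B_v$ into a common copy of $\mathrm{SO}(2)$'': sharing a conjugator is again a non-open condition, as you yourself half-observe mid-paragraph before retreating to the identity trick. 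So the set $\cU_2$ you construct is not open, and no neighborhood of your perturbed tuple satisfies alternative~(ii) by your argument; the statement requires an open set.

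The paper escapes this by never fixing $K$ in advance. It isolates the open condition $\cC$: for every unit vector $v$ there is a product $P$ in the semigroup $\langle\sA\rangle$ with $\|Pv\|<1$ (openness follows from compactness of the circle, which bounds the word lengths uniformly). For each $\sA\in\cC$ it then builds $K$ \emph{adaptively}: a quantitative lemma (\autoref{l:bounded_products}) shows that if at each step one applies a word contracting the currently most-expanded image direction of the partial product, the products stay uniformly bounded; since the contracting word can be chosen of uniformly bounded length $\ell-1$, one may prescribe the symbols at positions $0,\ell,2\ell,\dots$ arbitrarily and fill in the gaps, yielding entropy at least $\ell^{-1}\log k$. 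Density of $\cC$ outside $\overline{\cH}$ is then obtained via Yoccoz's theorem ($\cH\cup\cE$ dense) and a perturbation producing an irrational rotation plus a hyperbolic element $R^nA_iR^m$ in the semigroup, which together contract any direction. If you want to repair your write-up, the key change is to replace ``bounded products on a fixed subshift'' by an open contraction property of the semigroup and to let $K$ depend on $\sA$.
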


Notice that in the first case we have $\lambda_1^\bot(\sA) > 0$,
while in the second case
it follows from the entropy variational principle (see \cite[p.~269]{Petersen})
that there exists a measure $\mu \in \cM_T$ such that
$h_\mu(T) > 0$ and
$\lambda_1(\sA, \mu) = 0$.

For a nonlinear version of \autoref{t:positive}, see \cite[Theorem~2]{BoBoDi}.

\subsection{Organization of the paper and overview of the proofs}

In \autoref{s:preliminaries} we collect basic fact about dominated cocycles.

A standard procedure to solve ergodic optimization problems
is to look for a change of variables under which the
optimizing orbits become evident, or ``revealed''.
Following this idea, in \autoref{s:Barabanov}
we construct what we call ``Barabanov functions''
(in analogy to the Barabanov norms from joint spectral radius theory),
and immediately use them to prove the existence
of the Mather sets (\autoref{t:Mather}).

In \autoref{s:geometry} we use the Barabanov functions to prove that
the directions of the dominated splitting for points on the Mather sets
must obey severe geometrical obstructions, which in turn imply that
one direction uniquely determines the other, with an at most countable number of exceptions.
Using this property, we prove \autoref{t:zero} in \autoref{s:zero}.

Let us mention that some of the key parts in sections~\ref{s:Barabanov} and \ref{s:geometry} were inspired by ideas from the paper of Bousch and Mairesee \cite{BM}.
Nevertheless, we do not use their results directly.

The simpler proof of \autoref{t:positive} is given in \autoref{s:positive},
and is independent of the previous sections.

In \autoref{s:appendix} we present complementary information,
including counterexamples showing the limits of our results
and alternative definitions for some of the concepts we have discussed.
In the final subsection \ref{ss:open},
we pose a few problems and suggest some directions for future research.

\section{Preliminaries: Basic facts about $2 \times 2$ dominated cocycles}\label{s:preliminaries}

In this section we collect some simple facts 
about dominated cocycles that will be needed in the sequel.
Even though these facts are standard,
for the convenience of the reader we will provide some of the proofs.

\subsection{General cocycles}\label{ss:preliminaries_general}

In this subsection, let $\Omega$ be a compact metric space,
let $T \colon \Omega \to \Omega$ be a homeomorphism,
let $A \colon \Omega \to \GL(2,\R)$ be a continuous map,
and assume that the cocycle $(T,A)$ has a dominated splitting
into directions $e_1$, $e_2$.

\begin{lemma}\label{l:dom_easy}
If $\omega\in\Omega$ and $x \in \R^2 \setminus e_2(\omega)$ then
$$
0<\lim_{n \to \infty} \frac{\|A^{(n)}(\omega) x\|}{\|A^{(n)}(\omega) | e_1(\omega) \|}<\infty, \quad
\lim_{n \to \infty} \angle\big(A^{(n)}(\omega) x, e_1(T^n \omega) \big) = 0 \, .
$$
\end{lemma}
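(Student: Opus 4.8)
The plan is to work in the splitting $\R^2 = e_1(\omega) \oplus e_2(\omega)$ and follow the two components of $A^{(n)}(\omega) x$ separately; dominance forces the $e_2$-component to become negligible compared with the $e_1$-component, and both assertions will fall out of this.

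First I would write $x = x_1 + x_2$ with $x_i \in e_i(\omega)$, which is possible and unique because a dominated splitting is in particular a direct sum decomposition of $\R^2$; the hypothesis $x \notin e_2(\omega)$ means precisely that $x_1 \neq 0$. Set $u_n \coloneqq A^{(n)}(\omega) x_1$ and $v_n \coloneqq A^{(n)}(\omega) x_2$, so that $A^{(n)}(\omega) x = u_n + v_n$. By equivariance \eqref{eq:equivariance} we have $u_n \in e_1(T^n\omega)$ and $v_n \in e_2(T^n\omega)$, and since these subspaces are one-dimensional, $\|u_n\| = \|x_1\|\, \| A^{(n)}(\omega) | e_1(\omega) \|$ and $\|v_n\| = \|x_2\|\, \| A^{(n)}(\omega) | e_2(\omega) \|$. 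Hence, by the dominance inequality \eqref{eq:domination_via_splitting},
\[
\epsilon_n \coloneqq \frac{\|v_n\|}{\|u_n\|} = \frac{\|x_2\|}{\|x_1\|} \cdot \frac{\| A^{(n)}(\omega) | e_2(\omega) \|}{\| A^{(n)}(\omega) | e_1(\omega) \|} \le \frac{\|x_2\|}{c\,\|x_1\|}\, e^{-\delta n} \xrightarrow[n\to\infty]{} 0
\]
(and $\epsilon_n = 0$ identically if $x_2 = 0$).

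For the norm statement I would then write
\[
\frac{\|A^{(n)}(\omega) x\|}{\| A^{(n)}(\omega) | e_1(\omega) \|} = \|x_1\| \cdot \frac{\|u_n + v_n\|}{\|u_n\|} \, ,
\]
and expand $\|u_n + v_n\|^2/\|u_n\|^2 = 1 + 2\langle u_n, v_n\rangle/\|u_n\|^2 + \epsilon_n^2$; since $|\langle u_n, v_n\rangle| \le \|u_n\|\,\|v_n\|$, the cross term is at most $2\epsilon_n$ in absolute value, so $\|u_n + v_n\|/\|u_n\| \to 1$ and the displayed ratio converges to $\|x_1\| \in (0,\infty)$. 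For the angle statement, note that $u_n$ spans $e_1(T^n\omega)$, whence $\angle\big(A^{(n)}(\omega) x, e_1(T^n\omega)\big) = \angle(u_n + v_n, u_n)$; decomposing $v_n = v_n' + v_n''$ into its components parallel and orthogonal to $u_n$, one gets $\sin \angle(u_n + v_n, u_n) = \|v_n''\|/\|u_n + v_n\| \le \|v_n\|/(\|u_n\| - \|v_n\|) = \epsilon_n/(1 - \epsilon_n) \to 0$, using $\epsilon_n < 1$ for large $n$.

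There is no serious obstacle here: the only points requiring care are that the dominated splitting genuinely is a direct sum (so the decomposition $x = x_1 + x_2$ is well defined and $x_1 \neq 0$ under the hypothesis) and that one must produce an actual limit rather than mere boundedness, which is why I expand the square of the norm instead of settling for the triangle inequality. Note that continuity of $e_1, e_2$ and compactness of $\Omega$ are not needed for this particular lemma, although they do provide a uniform lower bound on $\angle(e_1,e_2)$ that yields an alternative, slightly coarser argument.
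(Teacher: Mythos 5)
The paper declares this lemma ``easy and left to the reader,'' and your argument is exactly the intended one: decompose $x = x_1 + x_2$ along the splitting, use dominance to show the $e_2$-component ratio $\epsilon_n$ decays exponentially, and deduce that the norm ratio converges to $\|x_1\|$ and the angle to $0$. Your write-up is correct and complete, including the care taken to squeeze $\|u_n+v_n\|/\|u_n\|$ between $1-\epsilon_n$ and $1+\epsilon_n$ so that an actual limit, not just boundedness, is obtained.
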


\begin{proof}
Easy and left to the reader.
\end{proof}

\begin{proposition}\label{p:dom_uniqueness}
The dominated splitting is unique.
\end{proposition}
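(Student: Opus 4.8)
The plan is to suppose $(T,A)$ admits two dominated splittings, say into directions $e_1, e_2$ and into directions $f_1, f_2$ (with the usual convention that the first direction is the ``dominating'' one), and to show that $e_1 \equiv f_1$ and $e_2 \equiv f_2$. The basic principle is that the dominating direction $e_1$ is characterized dynamically as the direction of maximal forward growth, while $e_2$ is the direction of maximal backward growth; since these characterizations do not refer to the splitting itself, two splittings must agree.

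First I would fix $\omega \in \Omega$ and show $f_1(\omega) = e_1(\omega)$. Suppose not. Since $f_1(\omega) \neq e_1(\omega)$, in particular $f_1(\omega)$ could a priori equal $e_2(\omega)$, so I would treat two cases. If $f_1(\omega) \notin e_2(\omega)$, then by \autoref{l:dom_easy} applied to $x \in f_1(\omega)$ we get
$$
\lim_{n\to\infty} \angle\bigl(A^{(n)}(\omega) f_1(\omega), e_1(T^n\omega)\bigr) = 0
\quad\text{and}\quad
0 < \lim_{n\to\infty} \frac{\|A^{(n)}(\omega)|f_1(\omega)\|}{\|A^{(n)}(\omega)|e_1(\omega)\|} < \infty .
$$
On the other hand, applying \autoref{l:dom_easy} to the splitting $f_1, f_2$ and to any $x \in e_1(\omega) \setminus f_2(\omega)$ — which exists provided $e_1(\omega) \neq f_2(\omega)$ — gives that $A^{(n)}(\omega) e_1(\omega)$ aligns with $f_1(T^n\omega)$ and that $\|A^{(n)}(\omega)|e_1(\omega)\| \asymp \|A^{(n)}(\omega)|f_1(\omega)\|$. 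Combining, both $e_1(T^n\omega)$ and $f_1(T^n\omega)$ attract the orbit of the $2$-dimensional-minus-$1$ set of directions; but the dominance inequality \eqref{eq:domination_via_splitting} for the splitting $e_1,e_2$ forces $\|A^{(n)}(\omega)|e_1(T^n\omega)$-component$\|$ to dominate the $e_2(T^n\omega)$-component, and similarly for $f$. The contradiction is extracted by picking a direction lying in $e_2(\omega)$ but not in $f_2(\omega)$ (or vice versa) if $e_2(\omega) \neq f_2(\omega)$: such a direction would have to both grow like the subdominant rate (from the $e$-splitting) and like the dominant rate (from the $f$-splitting), which is impossible by \eqref{eq:domination_via_non_conformality}. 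So either $f_1(\omega) = e_1(\omega)$, or we are forced into the degenerate configuration $\{e_1(\omega), e_2(\omega)\} = \{f_2(\omega), f_1(\omega)\}$ with the roles swapped, i.e.\ $f_1(\omega) = e_2(\omega)$ and $f_2(\omega) = e_1(\omega)$.

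To rule out the swapped configuration, I would use backward iteration. If $f_1(\omega) = e_2(\omega)$, then on one hand $f_1(\omega)$ is the forward-dominating direction for the $f$-splitting, so vectors in $f_1(\omega)$ grow at the top rate; on the other hand, since $f_1(\omega) = e_2(\omega)$ and the $e$-splitting is equivariant, $A^{(n)}(\omega) e_2(\omega) = e_2(T^n\omega)$, and \eqref{eq:domination_via_splitting} shows this direction grows strictly slower (by the factor $c e^{\delta n}$) than the $e_1$-direction — but any vector not in $e_2(\omega)$ grows at the $e_1$-rate, so the $e_2$-direction is the unique slowest direction, while being $f_1(\omega)$ it is supposed to be the fastest. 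Applying this to a vector in $e_1(\omega)$, which is not in $f_2(\omega) = e_1(\omega)$ — wait, that is in $f_2(\omega)$, so instead apply \autoref{l:dom_easy} with the $f$-splitting to a vector in $e_2(\omega) \setminus f_2(\omega)$; this is nonempty since $f_2(\omega) = e_1(\omega) \neq e_2(\omega)$. Such a vector grows at the $f_1$-rate $=$ $e_2$-rate by one splitting, but at the $e_1$-rate by the other (it is not in $e_2(\omega)$), contradicting $\delta > 0$. This eliminates the swap, giving $f_1 \equiv e_1$; then $f_2 \equiv e_2$ follows by running the identical argument for the inverse cocycle $(T^{-1}, A^{-1}\circ T^{-1})$, for which $e_2, e_1$ (in that order) is the dominated splitting.

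I expect the main obstacle to be purely bookkeeping: carefully enumerating the degenerate cases (when $e_i(\omega)$ coincides with $f_j(\omega)$ for various $i,j$) and making sure each one is excluded by the appropriate forward or backward growth comparison, since \autoref{l:dom_easy} requires the chosen vector to avoid the relevant subspace. There is no deep difficulty — everything reduces to the observation that a single nonzero vector cannot simultaneously realize two growth rates that differ by the uniform exponential gap $\delta$ coming from \eqref{eq:domination_via_non_conformality}. Continuity of the splittings is not even needed for uniqueness, though it is reassuring to note it is automatic.
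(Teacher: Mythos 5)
Your proposal uses the same two ingredients as the paper's proof: \autoref{l:dom_easy} (every direction off $e_2(\omega)$ grows at the top rate and aligns with $e_1$ under forward iteration) combined with the uniform exponential gap \eqref{eq:domination_via_splitting}, plus a passage to the inverse cocycle for the remaining equality. The paper packages the forward half more compactly as the identity
\[
\frac{\|A^{(n)}|e_1\|}{\|A^{(n)}|e_2\|}\cdot\frac{\|A^{(n)}|f_2\|}{\|A^{(n)}|e_1\|}\cdot\frac{\|A^{(n)}|f_1\|}{\|A^{(n)}|f_2\|}\cdot\frac{\|A^{(n)}|e_2\|}{\|A^{(n)}|f_1\|}=1,
\]
where the first and third factors blow up while, if $f_2(\omega)\neq e_2(\omega)$, the second and fourth have nonzero finite limits by \autoref{l:dom_easy} --- no case analysis required. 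Your contradiction (a vector in $e_2(\omega)\setminus f_2(\omega)$ would have to grow both at the subdominant $e$-rate and at the dominant $f$-rate) is the same computation unrolled.

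There is one logical mix-up to fix: the forward-iteration argument proves $f_2=e_2$, not $f_1=e_1$. Forward dynamics cannot single out $e_1(\omega)$ pointwise, because \emph{every} direction other than $e_2(\omega)$ is attracted to $e_1(T^n\omega)$ and grows at the top rate; only the slow direction is dynamically distinguished forward in time ($e_1(\omega)$ is distinguished backward in time, cf.\ \autoref{p:nested}). So the conclusion your first contradiction actually delivers is $f_2\equiv e_2$, and the inverse-cocycle step then yields $f_1\equiv e_1$ --- the opposite attribution to the one you wrote. Relatedly, the separate treatment of the ``swapped configuration'' is both unnecessary (it is already covered by the case $e_2(\omega)\neq f_2(\omega)$) and garbled: the vector you pick there lies in $e_2(\omega)$, so the parenthetical ``it is not in $e_2(\omega)$'' is false; what you need is that it is not in $f_2(\omega)$, hence by \autoref{l:dom_easy} applied to the $f$-splitting it grows comparably to $\|A^{(n)}(\omega)\|$, which is incompatible with the $e_2$-rate. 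With the labels straightened out the argument is correct and is essentially the paper's.
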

	
\begin{proof}
Let $f_1 \oplus f_2$ be another dominated splitting for the cocycle $(T,A)$.
Fix $\omega \in \Omega$ and
consider the product
$$
\frac{\|A^{(n)}(\omega) | e_1(\omega)\|}{\|A^{(n)}(\omega) | e_2(\omega) \|} \cdot
\frac{\|A^{(n)}(\omega) | f_2(\omega)\|}{\|A^{(n)}(\omega) | e_1(\omega) \|} \cdot
\frac{\|A^{(n)}(\omega) | f_1(\omega)\|}{\|A^{(n)}(\omega) | f_2(\omega) \|} \cdot
\frac{\|A^{(n)}(\omega) | e_2(\omega)\|}{\|A^{(n)}(\omega) | f_1(\omega) \|}
= 1 \, .
$$
Then the first and the third factors
tend to $\infty$ as $n \to \infty$.
If $f_2(\omega) \neq e_2(\omega)$ then by \autoref{l:dom_easy}
the second and the fourth factors have nonzero finite limits as  $n \to \infty$,
which is a contraction.
This shows that $f_2 = e_2$.
The same reasoning for the inverse cocycle shows that $f_1 = e_1$.
\end{proof}
	
\begin{lemma}\label{l:dom_comparison}
There exists $C>1$ such that
\begin{equation}\label{eq:dom_comparison}
C^{-1} \| A^{(n)} (\omega) | e_2(\omega) \| \le
\fm (A^{(n)} (\omega)) \le
\|A^{(n)} (\omega)\| \le
C \| A^{(n)} (\omega) | e_1(\omega) \|
\end{equation}
for any $\omega \in \Omega$ and $n \ge 0$.
\end{lemma}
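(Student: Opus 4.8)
The plan is to compare each of the quantities $\|A^{(n)}(\omega)|e_i(\omega)\|$, $\fm(A^{(n)}(\omega))$, and $\|A^{(n)}(\omega)\|$ by estimating how close the directions $e_1$ and $e_2$ are to the directions of maximal expansion and maximal contraction of the matrix $A^{(n)}(\omega)$ (i.e.\ its singular directions). The key geometric input is that the angle $\angle(e_1(\omega),e_2(\omega))$ is bounded below, uniformly in $\omega$: indeed $e_1,e_2\colon\Omega\to\P^1$ are continuous (a general property of dominated splittings, as recalled in the introduction) and never coincide, so on the compact space $\Omega$ the angle between them attains a positive minimum $\theta_0>0$. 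Since rescaling does not affect the claimed inequalities, I will fix at each point a unit vector $u_i(\omega)$ spanning $e_i(\omega)$.

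First I would prove the rightmost inequality. Write an arbitrary unit vector $x\in\R^2$ in the basis $u_1(\omega),u_2(\omega)$: by the angle bound there are coefficients $a,b$ with $|a|,|b|\le c_0$ (where $c_0$ depends only on $\theta_0$) and $x=au_1(\omega)+bu_2(\omega)$. Then
\[
\|A^{(n)}(\omega)x\| \le |a|\,\|A^{(n)}(\omega)|e_1(\omega)\| + |b|\,\|A^{(n)}(\omega)|e_2(\omega)\|
\le 2c_0\,\|A^{(n)}(\omega)|e_1(\omega)\|,
\]
using \eqref{eq:domination_via_splitting} (with $c\le 1$, $\delta>0$, or simply the easy fact that $\|A^{(n)}(\omega)|e_2\|\le \|A^{(n)}(\omega)|e_1\|$ for $n$ large, and absorbing the finitely many remaining $n$ into the constant) to bound the $e_2$-term by the $e_1$-term. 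Taking the sup over $x$ gives $\|A^{(n)}(\omega)\|\le C\,\|A^{(n)}(\omega)|e_1(\omega)\|$ for a uniform $C$. The leftmost inequality $C^{-1}\|A^{(n)}(\omega)|e_2(\omega)\|\le\fm(A^{(n)}(\omega))$ is dual: apply the inequality just proved to the inverse cocycle $(T^{-1},\tilde A)$ with $\tilde A(\omega)=A(T^{-1}\omega)^{-1}$, whose dominated splitting is the same pair of directions with the roles of $e_1$ and $e_2$ swapped, and translate $\|(A^{(n)})^{-1}\|$ back into $\fm(A^{(n)})^{-1}$. The two middle inequalities $\fm(L)\le\|L\|$ are immediate from the definition of the mininorm.

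The main obstacle — really the only subtlety — is making the passage "bound the $e_2$-term by the $e_1$-term" uniform in both $\omega$ and $n$, including the small values of $n$ where the constant $c$ in \eqref{eq:domination_via_splitting} could a priori make $\|A^{(n)}(\omega)|e_2\|$ somewhat larger than $\|A^{(n)}(\omega)|e_1\|$. This is handled by noting there are only finitely many such $n$ (namely those with $ce^{\delta n}<1$), that for each of them $\sup_{\omega}\|A^{(n)}(\omega)|e_2(\omega)\|/\|A^{(n)}(\omega)|e_1(\omega)\|$ is finite by compactness and continuity, and that $n=0$ gives ratio $1$; so we enlarge $C$ to accommodate this finite set. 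Everything else is routine linear algebra, and I would simply remark that the details are left to the reader, as the paper does for the companion Lemma~\ref{l:dom_easy}.
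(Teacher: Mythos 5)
Your proposal is correct and follows exactly the route the paper intends: the paper's own proof consists of the single remark that the lemma ``is a consequence of the fact that the angles between the directions of the dominated splitting are uniformly bounded from below,'' with details left to the reader, and your argument (decomposing a unit vector in the basis $u_1(\omega),u_2(\omega)$ with uniformly bounded coefficients, absorbing the $e_2$-term via \eqref{eq:domination_via_splitting}, and dualizing to the inverse cocycle for the mininorm bound) supplies precisely those details. No gaps; your caution about small $n$ is in fact unnecessary, since \eqref{eq:domination_via_splitting} already gives $\| A^{(n)}(\omega)|e_2(\omega)\| \le c^{-1}\| A^{(n)}(\omega)|e_1(\omega)\|$ uniformly for all $n\ge 1$.
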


\begin{proof}
This is a consequence of the fact that the angles between the
directions of the dominated splitting are uniformly bounded from below.
The details are left to the reader.
\end{proof}

It follows that the Lyapunov exponents defined by \eqref{eq:LE}
can be determined from the restriction of the cocycle to the directions that form the
dominated splitting:

\begin{corollary}\label{c:reduce_1D}
If the cocycle  $(T,A)$ is dominated then, for any $i \in \{1,2\}$,
\begin{equation}\label{eq:reduce_1D}
\lambda_i(A,\omega)
= \lim_{n \to \infty} \frac{1}{n} \log\|A^{(n)}(\omega) | e_i(\omega) \|
\end{equation}
for every $\omega \in \Omega$ such that at least one of these quantities is well-defined.
\end{corollary}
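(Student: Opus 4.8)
The plan is to reduce everything to the uniform two-sided comparison of \autoref{l:dom_comparison}. First I would record the two completely elementary one-sided bounds valid for an arbitrary invertible matrix $L$ and an arbitrary line $\ell \subset \R^2$: writing $\|L|\ell\|$ for the (unique) stretching factor of $L$ along $\ell$, one has $\fm(L) \le \|L|\ell\| \le \|L\|$, simply because $\|Lv\|$ lies between the smallest and the largest singular value of $L$ for any unit vector $v$. Specializing to $L = A^{(n)}(\omega)$ and $\ell = e_i(\omega)$ yields
\[
\|A^{(n)}(\omega) | e_1(\omega)\| \le \|A^{(n)}(\omega)\| ,
\qquad
\fm(A^{(n)}(\omega)) \le \|A^{(n)}(\omega) | e_2(\omega)\| .
\]

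Next I would pair these with the nontrivial inequalities of \autoref{l:dom_comparison}, which supply a constant $C>1$ with $\|A^{(n)}(\omega)\| \le C \, \|A^{(n)}(\omega) | e_1(\omega)\|$ and $C^{-1} \, \|A^{(n)}(\omega) | e_2(\omega)\| \le \fm(A^{(n)}(\omega))$. Together, for each $i \in \{1,2\}$, the quantity $\|A^{(n)}(\omega) | e_i(\omega)\|$ and the quantity appearing in the definition \eqref{eq:LE} of $\lambda_i(A,\omega)$ (namely $\|A^{(n)}(\omega)\|$ for $i=1$ and $\fm(A^{(n)}(\omega))$ for $i=2$) differ by a multiplicative factor confined to $[C^{-1},C]$, uniformly in $\omega$ and $n$.

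Finally I would apply $\tfrac1n\log(\cdot)$ and let $n\to\infty$: since $\tfrac1n\log C\to 0$, a sandwich argument shows that $\lim_{n\to\infty}\tfrac1n\log\|A^{(n)}(\omega) | e_i(\omega)\|$ exists precisely when $\lambda_i(A,\omega)$ does, and the two limits are then equal. I do not anticipate any real obstacle; the only point requiring a little care is that the statement assumes only \emph{one} of the two limits to exist a priori, which is exactly why one needs the uniform two-sided bound above rather than merely a one-sided estimate, and why it is essential that $C$ be independent of $n$.
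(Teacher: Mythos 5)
Your proposal is correct and is essentially identical to the paper's proof: it combines the uniform two-sided comparison of \autoref{l:dom_comparison} with the same two elementary estimates $\|A^{(n)}(\omega)|e_1(\omega)\|\le\|A^{(n)}(\omega)\|$ and $\fm(A^{(n)}(\omega))\le\|A^{(n)}(\omega)|e_2(\omega)\|$, and then passes to the limit of $\tfrac1n\log(\cdot)$. The extra care you take about only one limit being assumed to exist a priori is exactly the right point, and the uniform constant $C$ handles it.
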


\begin{proof}
Use \autoref{l:dom_comparison} together with the obvious estimates:
\[
\| A^{(n)} (\omega) | e_2(\omega) \| \ge \fm (A^{(n)} (\omega)) \quad \text{and} \quad
\| A^{(n)} (\omega) | e_1(\omega) \| \le \|A^{(n)} (\omega)\| \, . \qedhere
\]
\end{proof}

Notice that the RHS of \eqref{eq:reduce_1D} is a limit of Birkhoff averages,
so the integral formulas \eqref{eq:exponents_as_integrals} follow.

\begin{remark}
Actually \autoref{l:dom_comparison} implies that
the dominated splitting coincides with the Oseledets splitting wherever
the latter is defined.
The properties expressed by \autoref{c:reduce_1D} and formulas~\eqref{eq:exponents_as_integrals} hold in general for Oseledets splittings.
\end{remark}

\subsection{One-step cocycles}\label{ss:preliminaries_1_step}

Let us fix some notation.
The projective space of $\R^2$ is denoted by $\P^1$. 
Given $x \in \R^2_*$, let $\project{x}$ denote the unique line in $\P^1$ containing $x$.
For $x$, $y\in \R^2_*$, 
we denote by $\angle(x,y)$ the angle between the lines $x'$, $y'$;
after restriction to $\P^1$ this gives us the usual metric there.
Given a linear isomorphism $L$ of $\R^2$, let $\project{L}$
the self-map of $\P^1$ defined by $\project{L}(\project{u}) = \project{(L(u))}$.
If $\multicone \subset \R^2_*$ is a multicone then
let $\project{\multicone} \coloneqq \big\{ \project{x} \in \P^1 ; \; x \in \multicone \big\}$.

\smallskip

The following result provides an useful ``adapted metric'' on the multicone.
Similar constructions appear in \cite{ABY,BoGo,BMo}.

\begin{lemma}\label{l:adapted}
Assume that $(A_1, \dots, A_k) \in \GL(2,\R)^k$ generates a dominated one-step cocycle,
and let $\multicone \subset \R^2$ be a strictly forward-invariant multicone.
There exist a metric $d$ on the projectivization $\project{M}$
and constants $c_1>1$ and $0 < \tau < 1$ such that
for all $x$, $y \in \multicone$, we have
\begin{align}
&d \left( \project{A_i}\project{x} , \project{A_i}\project{y}\right)
\le \tau d \left( \project{x} , \project{y}\right)
\quad \text{for all } i \in \{1,\dots,k\} ,
\label{eq:metric_1}
\\
&c_1^{-1} \angle \left( x, y \right) \le
d \left( \project{x} , \project{y}\right) \le
c_1 \angle \left( x, y \right).
\label{eq:metric_2}
\end{align}
\end{lemma}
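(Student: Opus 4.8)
The plan is to build the metric $d$ cone by cone and then glue. Since $\multicone = \bigsqcup_{j=1}^m \cC_j$ is a disjoint union of finitely many closed cones, and each cone $\cC_j$ projectivizes to a closed arc $\project{\cC_j} \subset \P^1$, the space $\project{\multicone}$ is a disjoint union of $m$ closed arcs. Forward invariance says $\project{A_i}(\project{\multicone})$ is contained in the interior of $\project{\multicone}$ for every $i$; in particular each $\project{A_i}$ maps each arc $\project{\cC_j}$ into the interior of some arc $\project{\cC_{\sigma_i(j)}}$, and the image arc is a \emph{compactly contained} subarc. This is the geometric fact that will power the contraction: a projective map that sends an arc strictly inside another arc (or itself) contracts a suitable metric. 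The cleanest way to package this is via the Hilbert (projective) metric on each arc: for an arc $I \subsetneq \P^1$ that is not all of $\P^1$, lift it to an interval in an affine chart and use the cross-ratio metric $d_I$. A projective map sending the closure of $I$ into the interior of $J$ is then a uniform contraction from $(I,d_I)$ to $(J,d_J)$, with contraction factor bounded by a constant depending only on how deeply $\bar I$ sits inside $J$ (Birkhoff's contraction / the Schwarz–Pick-type lemma for cross-ratios).

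First I would fix, for each arc $\project{\cC_j}$, a slightly larger open arc $\widehat{I}_j \subsetneq \P^1$ still disjoint from the other $\project{\cC_{j'}}$, chosen so that $\project{A_i}(\project{\cC_j}) \subset \widehat{I}_{\sigma_i(j)}$ for all $i$ — possible by forward invariance and compactness. Equip $\widehat I_j$ with its Hilbert metric $d_j$, and on $\project{\multicone}$ define $d$ to be $d_j$ within arc $j$ and a large fixed constant $D$ between points in different arcs (choosing $D$ larger than all intra-arc diameters so the triangle inequality holds trivially across arcs). By the contraction property of projective maps between nested arcs, there is $\tau_0 < 1$ with $d_{\sigma_i(j)}(\project{A_i}\project{x},\project{A_i}\project{y}) \le \tau_0\, d_j(\project{x},\project{y})$ for $x,y$ in the same cone $\cC_j$; since $\project{A_i}$ also decreases the ``different-arc'' distance from $D$ to something $\le D$, and in fact — because distinct arcs land in arcs and we can arrange $D$ appropriately, or simply because $\project{A_i}(\project{\multicone})$ has diameter-per-component bounded below $D$ — one gets a uniform $\tau<1$ for all pairs $x,y \in \multicone$. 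This gives \eqref{eq:metric_1}. For \eqref{eq:metric_2}: on each compact arc $\project{\cC_j}$ the Hilbert metric $d_j$ and the angle metric $\angle$ are both genuine metrics inducing the same topology, hence (compactness, plus smoothness of both away from the arc endpoints, where one checks they vanish to the same order) they are bi-Lipschitz comparable with some constant $c_1^{(j)}$; between different arcs, $\angle(x,y)$ is bounded above and below by positive constants and $d = D$, so again comparable. Take $c_1 = \max_j c_1^{(j)}$ enlarged to cover the cross-arc case.

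The main obstacle, and the step needing genuine care rather than routine estimates, is handling the \emph{endpoints} of the arcs and the interaction of the Hilbert metrics of the enlarged arcs $\widehat I_j$ with the inclusions: I must ensure simultaneously that (a) each $\widehat I_j$ is a proper subarc of $\P^1$ (so the Hilbert metric is defined and finite), (b) the $\widehat I_j$ remain pairwise disjoint (so $d$ is well-defined and the different-arc case is uniform), and (c) the images $\project{A_i}(\project{\cC_j})$ are compactly inside $\widehat I_{\sigma_i(j)}$ with a uniform gap, which is what yields $\tau_0<1$ uniformly over the finitely many pairs $(i,j)$. All three follow from forward invariance $\bigcup_i A_i(\multicone) \subset \Int \multicone$ together with compactness of $\project{\multicone}$ and finiteness of $k$ and $m$, but the enlargement must be done in the right order. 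An alternative that sidesteps the Hilbert-metric bookkeeping — and is perhaps what the authors do, given the reference to \cite{ABY,BoGo,BMo} — is: pick on each arc \emph{any} smooth metric $d_j$ first, prove directly that some high iterate $\project{A_{i_N}\cdots A_{i_1}}$ contracts it (because the image arc sits compactly inside, so the derivative of the projective map, measured in these metrics, is $<1$ in sup-norm after enough steps by a telescoping/continuity argument), and then average over iterates $d \coloneqq \sum_{n=0}^{N-1}\tau^{-n}(\text{pullback of }d_j)$ to upgrade the eventual contraction to a one-step contraction. Either route works; I would present the Hilbert-metric version as it makes the cross-ratio contraction inequality essentially automatic, relegating (a)–(c) to a short lemma about proper subarcs.
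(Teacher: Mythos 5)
Your construction of $d$ within a single connected component of $\project{\multicone}$ (Hilbert metric on a slightly enlarged arc, compact inclusion of the images giving a uniform contraction factor, bi-Lipschitz comparison with the angle) is exactly what the paper does, and your worries about the enlargement order and arc endpoints are the routine part. The genuine gap is in the cross-component case of \eqref{eq:metric_1}. Setting $d(\project{x},\project{y})=D$ (a constant) for $\project{x},\project{y}$ in different components does \emph{not} give $d(\project{A_i}\project{x},\project{A_i}\project{y})\le\tau D$ in the case where the two image points again lie in \emph{different} components, since then the left-hand side equals $D$ as well. Your hedge (``$\project{A_i}(\project{\multicone})$ has diameter-per-component bounded below $D$'') only covers the case where the images coalesce into one component, and coalescence after one step is not guaranteed: e.g.\ with three components it is perfectly consistent (with the constraint that no single projective map strictly preserves two disjoint arcs) to have $\project{A_1}$ send $\project{\cC_1}\to\Int\project{\cC_1}$, $\project{\cC_2}\to\Int\project{\cC_3}$, $\project{\cC_3}\to\Int\project{\cC_1}$; then a pair $(\project{x},\project{y})\in\project{\cC_1}\times\project{\cC_2}$ stays split for one more step and your metric records no decrease.

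The paper's remedy is to replace the constant $D$ by the integer-valued ``coalescence time'' $\ell(\project{x},\project{y})$: the least $n\ge 0$ such that $\project{A^{(n)}(\omega)}\project{x}$ and $\project{A^{(n)}(\omega)}\project{y}$ lie in the same component for \emph{every} word $\omega$. Two facts make this work, and both are missing from your proposal: (a) $\ell$ is finite and uniformly bounded by the square of the number of components, via a pigeonhole argument on pairs of components combined with the observation that no projectivized linear map can have two disjoint strictly invariant arcs (this kills any recurrent splitting pattern); (b) $\ell(\project{A_i}\project{x},\project{A_i}\project{y})\le\max(\ell(\project{x},\project{y})-1,0)$, so after rescaling the intra-component Hilbert metrics to be $\le 1/2$ one gets a genuine one-step contraction with $\tau$ depending only on the number of components, and $\ell$ satisfies an ultrametric inequality so the glued $d$ is a metric. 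Your alternative route (prove an $N$-step contraction and telescope) would also have to confront exactly this coalescence issue, since for split pairs no amount of ``image arc sits compactly inside'' controls the distance until the pair merges into one component; so either way the pigeonhole/no-two-invariant-arcs lemma is the missing ingredient.
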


\begin{proof}
By a compactness argument, there exists an open neighborhood $U$ of $\project{\multicone}$
in $\P^1$ such that $\project{A_i}(U) \subset \project{\multicone}$ for all $i \in \{1,\dots,k\}$.
We can assume that each connected component of $U$ contains exactly one connected component of~$\project{M}$.

Endow each connected component of $U$ with its Hilbert metric,
and restrict it to the corresponding connected component of $\project{M}$.
We use the same letter $d$ to denote all those metrics.
Rescaling if necessary, we can assume that $d \le 1/2$ whenever defined.
Moreover, there are constants $c_1>1$ and $0 < \tau < 1$ such that
properties~\eqref{eq:metric_1} and \eqref{eq:metric_2}
hold whenever $\project{x}$ and $\project{y}$
are in the same connected component of $\project{\multicone}$.

Given $\project{x}$, $\project{y} \in \project{\multicone}$,
define $\ell(\project{x}, \project{y})$ as
the least integer $n \ge 0$ with the property
that for all $\omega \in k^\Z$, the directions
$\project{A^{(n)}(\omega)}\project{x}$ and $\project{A^{(n)}(\omega)}\project{y}$
belong to the same connected component of $\project{M}$. Notice that such a number always exists
and is not bigger than the square of the number of components of $M'$; this follows by a pigeon-hole argument using the fact that no linearly-induced map can have two disjoint strictly invariant intervals.


The function $\ell$
has the following property:
$$
\ell \left( \project{A_i}\project{x} , \project{A_i}\project{y}\right)
\le
\max \left( \ell \left( \project{x} , \project{y}\right) - 1, 0 \right),
\quad \text{for all } i \in \{1,\dots,k\} ,
$$
and satisfies an ultrametric inequality:
$$
\ell \left( \project{x} , \project{y}\right)
\le
\max \left( \ell \left( \project{x} , \project{z} \right) , \ell \left( \project{y} , \project{z} \right)  \right).
$$
We now extend $d$ by setting
$d \left( \project{x} , \project{y}\right) \coloneqq \ell \left( \project{x} , \project{y}\right)$
if $\project{x}$ and $\project{y}$ are in different connected components of $\project{\multicone}$.
Then $d$ is a distance function.
Moreover, increasing $c_1$ and $\tau$ if necessary, properties
\eqref{eq:metric_1} and \eqref{eq:metric_2} are satisfied.
\end{proof}

\begin{proposition}\label{p:nested}
Assume that $(A_1, \dots, A_k) \in \GL(2,\R)^k$
generates a dominated one-step cocycle.
Let $e_1$, $e_2 \colon k^\Z \to \P^1$ be the invariant directions forming the dominated splitting,
and let $M \subset \R^2_*$ be a strictly forward-invariant multicone,
and let $\multiconedual$ be the (strictly backwards-invariant) complementary multicone.
Then for any $\omega \in k^\Z$ we have
\begin{align}
\{ e_1(\omega) \} &=
\bigcap_{n=1}^\infty \project{A_{\omega_{-1}}} \cdots \project{A_{\omega_{-n}}} (\project{\multicone}) \, ,  \label{eq:nested_1}
\\
\{ e_2(\omega) \} &=
\bigcap_{n=1}^\infty (\project{A_{\omega_{n-1}}} \cdots \project{A_{\omega_{0}}})^{-1} (\project{\multiconedual}) \, . \label{eq:nested_2}
\end{align}
\end{proposition}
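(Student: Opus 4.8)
The plan is to establish the two formulas \eqref{eq:nested_1} and \eqref{eq:nested_2} by exploiting the equivariance of the dominated splitting together with the strict invariance of the multicones. I will focus on \eqref{eq:nested_1}; formula \eqref{eq:nested_2} then follows by applying the same argument to the inverse cocycle, using that $\multiconedual$ is forward-invariant for $(A_1^{-1},\dots,A_k^{-1})$ and that the dominated splitting for the inverse cocycle is $e_1$, $e_2$ with the roles reversed.

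First I would observe that the intersection $\bigcap_{n\ge 1}\project{A_{\omega_{-1}}}\cdots\project{A_{\omega_{-n}}}(\project{\multicone})$ is a nested intersection of nonempty compact sets: nestedness holds because $\project{A_i}(\project{\multicone})\subset \Int\project{\multicone}$ for every $i$, so adjoining one more factor $\project{A_{\omega_{-n-1}}}$ on the right maps $\project{\multicone}$ into its own interior before the remaining factors are applied. Hence the intersection is a nonempty compact subset of $\project{\multicone}$; the content of the proposition is that it is the single point $e_1(\omega)$. To see that $e_1(\omega)$ lies in the intersection, I would use equivariance \eqref{eq:equivariance}: for each $n$ we have $e_1(\omega)=\project{A_{\omega_{-1}}}\cdots\project{A_{\omega_{-n}}}\big(e_1(T^{-n}\omega)\big)$, and $e_1(T^{-n}\omega)\in\project{\multicone}$ since the dominated direction always lies in the forward-invariant multicone (this is standard for dominated cocycles, and also follows from \autoref{l:dom_easy} applied with any $x\in\multicone$, whose forward images converge to $e_1$ while staying in $\Int\multicone$). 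Therefore $e_1(\omega)$ belongs to every set in the nested family.

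The key remaining step is to show the intersection contains no other point, and this is where I would invoke the adapted metric from \autoref{l:adapted}. Suppose $\project u$ lies in the intersection; then for every $n$ there is some $\project{v_n}\in\project{\multicone}$ with $\project u=\project{A_{\omega_{-1}}}\cdots\project{A_{\omega_{-n}}}(\project{v_n})$. Writing $\project u$ also as $\project{A_{\omega_{-1}}}\cdots\project{A_{\omega_{-n}}}(e_1(T^{-n}\omega))$ and applying the contraction estimate \eqref{eq:metric_1} exactly $n$ times, I get $d(\project u,\project u)$ — which is zero — while on the other hand the composed map contracts $d(e_1(T^{-n}\omega),\project{v_n})$ by a factor $\tau^n$; this is consistent, so I need to argue in the other direction. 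Instead, take two points $\project u_1,\project u_2$ in the intersection, both of the form $\project{A_{\omega_{-1}}}\cdots\project{A_{\omega_{-n}}}(\project{w_i^{(n)}})$ with $\project{w_i^{(n)}}\in\project{\multicone}$; then $d(\project u_1,\project u_2)\le\tau^n\, d(\project{w_1^{(n)}},\project{w_2^{(n)}})\le\tau^n\cdot(\operatorname{diam}\project{\multicone})$, which tends to $0$ as $n\to\infty$, forcing $\project u_1=\project u_2$. Thus the intersection is a single point, which by the previous paragraph must be $e_1(\omega)$.

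I expect the main obstacle to be purely bookkeeping rather than conceptual: one must check that the adapted metric $d$, which is genuinely a metric on all of $\project{\multicone}$ (including the ultrametric part across different components), gives a uniform bound on $\operatorname{diam}\project{\multicone}$ and that the $n$-fold contraction estimate composes correctly — the maps $\project{A_{\omega_{-j}}}$ each send $\project{\multicone}$ into itself, so \eqref{eq:metric_1} applies at every stage and the factors of $\tau$ genuinely multiply. A minor point to handle with care is that the contraction in \eqref{eq:metric_1} is stated for $x,y$ ranging over $\multicone$, so one must confirm that all intermediate images $\project{A_{\omega_{-j}}}\cdots\project{A_{\omega_{-n}}}(\project{\multicone})$ stay inside $\project{\multicone}$, which is immediate from forward-invariance. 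Finally, for \eqref{eq:nested_2} I would note that the backwards orbit structure of the original cocycle corresponds to the forward orbit structure of the inverse, and that $(\project{A_{\omega_{n-1}}}\cdots\project{A_{\omega_0}})^{-1}=\project{A_{\omega_0}^{-1}}\cdots\project{A_{\omega_{n-1}}^{-1}}$, so the identical nested-contraction argument — now with the adapted metric for the inverse cocycle on $\project{\multiconedual}$ — yields $e_2(\omega)$.
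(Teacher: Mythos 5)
Your overall architecture is sound but genuinely different from the paper's. You prove \eqref{eq:nested_1} by showing (a) the nested intersection is a single point, via the $\tau^n$-contraction of the adapted metric of \autoref{l:adapted}, and (b) $e_1(\omega)$ belongs to every term, via equivariance plus the claim that $e_1(\sigma)\in\project{\multicone}$ for every $\sigma$. The paper never assumes anything about where $e_1$ lives: it uses the same contraction argument to show that the intersections \eqref{eq:nested_1}--\eqref{eq:nested_2} define a continuous equivariant splitting, verifies via \autoref{l:angle_derivative} that this splitting is dominated, and then invokes uniqueness of dominated splittings (\autoref{p:dom_uniqueness}) to identify it with $e_1\oplus e_2$. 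The detour through uniqueness exists precisely to avoid your step (b): the inclusion $e_1(k^\Z)\subset\project{\multicone}$ is treated in the paper as a \emph{consequence} of this proposition, not an input.

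Step (b) is where your write-up is too thin. Your justification --- apply \autoref{l:dom_easy} to some $x\in\multicone$ and note that the forward images converge in angle to $e_1$ while staying in $\Int\multicone$ --- only shows $\angle\big(A^{(n)}(\sigma)x,\,e_1(T^n\sigma)\big)\to 0$ for a \emph{fixed} $\sigma$, i.e.\ that $e_1$ approaches $\project{\multicone}$ \emph{along the forward orbit of $\sigma$}; to conclude $e_1(\omega)\in\project{\multicone}$ for the given $\omega$ you must apply this at the moving base points $T^{-n}\omega$ and take $n$ steps each time, which requires the convergence in \autoref{l:dom_easy} to be uniform in the base point (true, because the domination estimate \eqref{eq:domination_via_splitting} is uniform, but it has to be said). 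You must also choose $x\in\multicone$ with $\angle(x,e_2(T^{-n}\omega))$ bounded away from zero --- possible since $\project{\multicone}$ contains an interval, but this too needs a word to avoid circularity with the other half of the statement. Once step (b) is repaired, the rest --- the $\tau^n\cdot\operatorname{diam}\project{\multicone}$ estimate forcing the intersection to be a singleton, and passing to the inverse cocycle on $\multiconedual$ for \eqref{eq:nested_2} --- is correct and coincides with the paper's own use of \autoref{l:adapted}. What the paper's route buys in exchange for the extra domination check is that it needs no a priori information about $e_1$, $e_2$, and as a by-product it reproves that a forward-invariant multicone implies domination.
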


In particular, $e_1(\omega) \in \project{\multicone}$ and $e_2(\omega) \in \project{\multiconedual}$. 
To prove this proposition,
we will use \autoref{l:adapted} and the following fact:

\begin{lemma}\label{l:angle_derivative}
If $B \in \GL(2,\R)$ and $x\in \R^2_*$ then
$$
\lim_{y \to x} \frac{\angle(Bx,By)}{\angle(x,y)} =
|\det B| \left(\frac{\|Bx\|}{\|x\|}\right)^{-2} \, .
$$
\end{lemma}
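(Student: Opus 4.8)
The plan is to reduce the statement to the elementary identity relating the angle to the wedge product. For nonzero $u, w \in \R^2$ write $u \wedge w \coloneqq u_1 w_2 - u_2 w_1 = \det[u \mid w]$, where $[u \mid w]$ is the matrix with columns $u$, $w$. Then $|u \wedge w| = \|u\|\,\|w\|\,\sin\angle(u,w)$; this holds whether $\angle$ denotes the angle between vectors or the angle between the corresponding lines, since $\sin$ is invariant under $\theta \mapsto \pi - \theta$. The one algebraic fact needed is $Bu \wedge Bw = (\det B)\,(u \wedge w)$, which is immediate from $[Bu \mid Bw] = B\,[u \mid w]$ and multiplicativity of the determinant. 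Combining these two identities, for every $y \in \R^2_*$ not proportional to $x$ we obtain
\begin{equation*}
\frac{\angle(Bx, By)}{\angle(x,y)}
= \frac{\angle(Bx,By)}{\sin\angle(Bx,By)} \cdot \frac{\sin\angle(x,y)}{\angle(x,y)} \cdot |\det B| \cdot \frac{\|x\|\,\|y\|}{\|Bx\|\,\|By\|} \, .
\end{equation*}

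Then I would let $y \to x$ (through vectors with $\project{y} \neq \project{x}$, so that the ratio is defined). Since $\project{B}$ is a continuous self-map of $\P^1$, we have $\angle(Bx, By) \to 0$, hence the first factor tends to $1$ because $\sin t \sim t$ as $t \to 0$; the second factor tends to $1$ for the same reason, since $\angle(x,y) \to 0$; the factor $|\det B|$ is constant; and by continuity of the Euclidean norm the last fraction tends to $\|x\|^2 / \|Bx\|^2$. Multiplying, the limit equals $|\det B|\,\|x\|^2 / \|Bx\|^2 = |\det B|\,(\|Bx\|/\|x\|)^{-2}$, as claimed.

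There is essentially no obstacle. The only points that deserve a word are that the limit is implicitly over $y$ with $\project{y}\neq\project{x}$, and that $\angle(Bx,By)\to 0$; the latter can be taken from the continuity of $\project{B}$, or proved directly from the displayed identity itself, since it gives $\sin\angle(Bx,By) = |\det B|\,\|x\|\,\|y\|\,\sin\angle(x,y)\big/(\|Bx\|\,\|By\|)$, which tends to $0$. (Alternatively, one could parametrize directions near $\project{x}$ by an angular coordinate $\theta$, note that $\angle$ agrees with $|\Delta\theta|$ to first order, and compute the derivative $d\bar\theta/d\theta = \det B / \|B(\cos\theta,\sin\theta)\|^2$ of the induced circle map; but the wedge-product computation above is shorter and avoids choosing coordinates.)
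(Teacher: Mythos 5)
Your proof is correct and follows essentially the same route as the paper, which argues via areas of the triangles $0,x,y$ and $0,Bx,By$ (the area being half the absolute wedge product) and the fact that $B$ scales areas by $|\det B|$. You have simply written out in full the first-order comparison $\sin t \sim t$ that the paper compresses into the phrase ``up to an error $o(\|x-y\|)$''.
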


\begin{proof}
Up to an error $o(\|x-y\|)$,
the triangle with vertices $0$, $x$, $y$ has area $\|x\|^2 \angle(x,y)$,
while the triangle with vertices $0$, $Bx$, $By$ has area $\|Bx\|^2 \angle(Bx,By)$. Since the linear map $B$ expands areas by a factor $|\det B|$, the lemma follows.
\end{proof}

\begin{proof}[Proof of \autoref{p:nested}]
Assume given a multicone $M$ for $(A_1,\dots,A_k)$.
We will show that formulas \eqref{eq:nested_1}, \eqref{eq:nested_2} define
directions forming a dominated splitting;
therefore the proposition will follow from the uniqueness of dominated splittings
(\autoref{p:dom_uniqueness}).

Indeed, for every $\omega \in k^\Z$, the RHS of \eqref{eq:nested_1}
is a nested intersection of compact subsets of $\P^1$
whose diameters, by \autoref{l:adapted}, converge to $0$.
Therefore the intersections contains a single point in $\P^1$; call it $e_1(\omega)$.
Analogously we define $e_2(\omega)$ using \eqref{eq:nested_2}.
This pair of directions forms a continuous splitting with the equivariance property~\eqref{eq:equivariance}.
Let us check that this splitting is dominated.
By \autoref{l:adapted}, there exist constants $c_1>1$, $0<\tau<1$ such that
$$
\frac{\angle \big( A^{(n)}(\omega)x,A^{(n)}(\omega)y \big)}{\angle(x,y)} \le c_1^2 \tau^n
\quad \text{for all } \omega\in k^\Z, \ x,y\in M,\ n\ge 0 \, .
$$
Taking $x \in e_1(\omega)$ and using \autoref{l:angle_derivative},
we conclude that
$$
\| A^{(n)}(\omega) | e_1(\omega) \| \ge c_1^{-1} |\det A^{(n)}(\omega)|^{1/2} \tau^{-n/2}
\quad \text{for all } \omega\in k^\Z, \ n\ge 0 \, .
$$
By an analogous argument, there exist constants $\bar{c}_1>1$, $0<\bar{\tau}<1$ such that
$$
\| A^{(n)}(\omega) | e_2(\omega) \| \le \bar{c}_1 |\det A^{(n)}(\omega)|^{1/2} \bar{\tau}^{n/2}
\quad \text{for all } \omega\in k^\Z, \ n\ge 0 \, .
$$
Therefore the domination property~\eqref{eq:domination_via_splitting}
holds for appropriate constants $c$, $\delta$.
As explained before, the proposition follows from uniqueness of dominated splittings.\footnote{Incidentally, we have just proved that the existence of a strictly forward-invariant multicone implies domination. For the proof of the converse, see \cite{ABY,BoGo}.}
\end{proof}

Let $\Z_-$ (resp.\ $\Z_+$) be the set of negative (resp.\ nonnegative) integers.
Define the following projections:
\begin{alignat}{2}
\pi_- \colon k^\Z &\to k^{\Z_-} \, , &\qquad \pi_-(\omega) &= (\dots, \omega_{-2},\omega_{-1}) \, ,
\label{eq:pi-}
\\
\pi_+ \colon k^\Z &\to k^{\Z_+} \, , &\qquad \pi_+(\omega) &= (\omega_0, \omega_1,\dots) \, .
\label{eq:pi+}
\end{alignat}

As a straightforward consequence of \autoref{p:nested}, we have:

\begin{corollary}\label{c:factor_projections}
Assume that $(A_1, \dots, A_k) \in \GL(2,\R)^k$ generates a dominated one-step cocycle.
Then, for each $\omega \in k^\Z$, the invariant directions $e_1(\omega)$ and $e_2(\omega)$
forming the dominated splitting depend only on
$\pi_-(\omega)$ and $\pi_+(\omega)$, respectively,
and therefore there exist continuous maps $\tilde{e}_1$, $\tilde{e}_2$
such that the following diagrams commute:
\begin{equation}\label{eq:diagrams}
\begin{tikzcd}
k^\Z \arrow{d}[swap]{\pi_-}\arrow{r}{e_1} & \P^1 \\
k^{\Z_-} \arrow{ru}[swap]{\tilde{e}_1}
\end{tikzcd}
\qquad\qquad
\begin{tikzcd}
k^\Z \arrow{d}[swap]{\pi_+}\arrow{r}{e_2} & \P^1 \\
k^{\Z_+} \arrow{ru}[swap]{\tilde{e}_2}
\end{tikzcd}
\end{equation}	
\end{corollary}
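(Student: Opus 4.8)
The plan is to deduce everything from \autoref{p:nested}, which already expresses $e_1(\omega)$ and $e_2(\omega)$ as nested intersections indexed by the past and the future of $\omega$, respectively. First I would observe that in formula~\eqref{eq:nested_1} the sets $\project{A_{\omega_{-1}}} \cdots \project{A_{\omega_{-n}}} (\project{\multicone})$ depend only on the coordinates $\omega_{-1}, \dots, \omega_{-n}$ of $\omega$, hence only on $\pi_-(\omega)$; since $e_1(\omega)$ is determined by this whole family, $e_1$ factors through $\pi_-$, giving a well-defined map $\tilde e_1 \colon k^{\Z_-} \to \P^1$ making the left diagram in~\eqref{eq:diagrams} commute. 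Continuity of $\tilde e_1$ follows because $\pi_-$ is a topological quotient map (it is continuous, surjective, and closed as a map between compact Hausdorff spaces) and $e_1$ is continuous (a dominated splitting is automatically continuous, as recalled in the introduction). Symmetrically, \eqref{eq:nested_2} shows $e_2(\omega)$ depends only on $\omega_0, \omega_1, \dots$, i.e.\ only on $\pi_+(\omega)$, producing a continuous $\tilde e_2 \colon k^{\Z_+} \to \P^1$ making the right diagram commute.

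Next I would prove injectivity of $\tilde e_1$ under the forward NOC. Take two one-sided sequences $\xi = (\dots,\xi_{-2},\xi_{-1})$ and $\eta = (\dots,\eta_{-2},\eta_{-1})$ in $k^{\Z_-}$ with $\xi \neq \eta$; let $-m$ be the largest index at which they differ, so $\xi_{-m} \neq \eta_{-m}$ but $\xi_{-j} = \eta_{-j}$ for $0 < j < m$. By~\eqref{eq:nested_1}, $\tilde e_1(\xi)$ lies in the set
\[
\project{A_{\xi_{-1}}} \cdots \project{A_{\xi_{-m}}}(\project{\multicone})
= \project{A_{\xi_{-1}}} \cdots \project{A_{\xi_{-(m-1)}}} \big( \project{A_{\xi_{-m}}}(\project{\multicone}) \big),
\]
and likewise $\tilde e_1(\eta)$ lies in $\project{A_{\eta_{-1}}} \cdots \project{A_{\eta_{-(m-1)}}}\big(\project{A_{\eta_{-m}}}(\project{\multicone})\big)$. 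Since the matrices $A_{\xi_{-1}},\dots,A_{\xi_{-(m-1)}}$ coincide with $A_{\eta_{-1}},\dots,A_{\eta_{-(m-1)}}$, both points lie in the image under the common invertible projective map $\project{A_{\xi_{-1}}} \cdots \project{A_{\xi_{-(m-1)}}}$ of the sets $\project{A_{\xi_{-m}}}(\project{\multicone})$ and $\project{A_{\eta_{-m}}}(\project{\multicone})$. But the forward NOC says $A_{\xi_{-m}}(\multicone) \cap A_{\eta_{-m}}(\multicone) = \emptyset$, so these two sets are disjoint, and an invertible map preserves disjointness; therefore $\tilde e_1(\xi) \neq \tilde e_1(\eta)$. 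Hence $\tilde e_1$ is one-to-one. Applying this to the inverse cocycle $(A_1^{-1},\dots,A_k^{-1})$, whose forward-invariant multicone is $\multiconedual$ and whose forward NOC is the backwards NOC of $\sA$, yields injectivity of $\tilde e_2$ via~\eqref{eq:nested_2} by the identical argument.

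Finally, for the Cantor set conclusions: $\tilde e_1$ is a continuous injection from the compact metrizable space $k^{\Z_-}$ into the Hausdorff space $\P^1$, hence a homeomorphism onto its image; since $k^{\Z_-}$ is homeomorphic to the Cantor set (a nonempty, compact, metrizable, perfect, totally disconnected space), so is $e_1(k^\Z) = \tilde e_1(k^{\Z_-})$, and symmetrically for $e_2(k^\Z)$. I do not anticipate a genuine obstacle here: the only point requiring a little care is the ``largest differing index'' reduction in the injectivity argument, which works precisely because composing with the \emph{common} prefix of projective maps is an injective operation — and this is exactly where the one-step structure (locally constant cocycle) and the NOC are both used. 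The continuity of $\tilde e_i$, while routine, relies on $\pi_\pm$ being quotient maps, which should be stated explicitly.
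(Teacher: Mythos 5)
Your proposal is correct and follows exactly the route the paper intends: the paper states this corollary as a ``straightforward consequence'' of \autoref{p:nested} without writing out the details, and your argument (factorization through $\pi_\pm$ because the nested intersections \eqref{eq:nested_1}--\eqref{eq:nested_2} only involve past/future coordinates, injectivity via the longest common prefix and disjointness of the images $A_i(\multicone)$ under the NOC, and the continuous-injection-from-a-compact-space argument for the Cantor set claim) is precisely the intended filling-in. The only point worth a passing remark is that for $\tilde e_2$ one should use whichever forward-invariant multicone witnesses the backwards NOC for $(A_1^{-1},\dots,A_k^{-1})$ rather than insisting it be $\multiconedual$; since \autoref{p:nested} applies to any such multicone, this changes nothing.
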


The NOC conditions introduced in \autoref{ss:zero} can be characterized in terms of the invariant directions as follows:

\begin{proposition}\label{p:NOC}
Assume that $(A_1, \dots, A_k) \in \GL(2,\R)^k$ generates a dominated one-step cocycle.
Let  $\tilde{e}_1$, $\tilde{e}_2$ be the maps given by \autoref{c:factor_projections}.
Then:
\begin{itemize}
\item $\tilde{e}_1$ is one-to-one if and only if the forward NOC holds, in which case $e_1 ( k^\Z )$ is a Cantor set.
\item $\tilde{e}_2$ is one-to-one if and only if the backwards NOC holds, in which case $e_2 ( k^\Z )$ is a Cantor set.
\end{itemize}
\end{proposition}

\begin{proof}
We only consider the first statement, since the second one is analogous.
The ``if'' part is a direct consequence of formula \eqref{eq:nested_1}.
Conversely, assume that $\tilde{e}_1$ is one-to-one.
Consider a strictly forward-invariant multicone $M$ and the adapted metric $d$ on $M'$ given by \autoref{l:adapted}.
It follows from equivariance \eqref{eq:equivariance} that the compact sets $A_i'(e_1(k^\Z))$ are disjoint; in particular their $d$-distances from one another are bigger than some $\epsilon>0$. Consider the closed $\epsilon/2$-neighborhood of $e_1(k^\Z)$. This set is a projectivized multicone (because each component has size at least $\epsilon$, hence there are finitely many of them), is strictly forward-invariant (by \eqref{eq:metric_1}), and its images under the maps $A_i'$ do not overlap. So the forward NOC holds.
\end{proof}

\section{Barabanov functions and Mather sets}\label{s:Barabanov}

\subsection{Statements}

A \emph{Barabanov norm} for a compact set $\sA$ of $d \times d$ matrices is a norm $\tribar{\mathord{\cdot}}$ on $\R^d$ such that
\begin{equation}\label{eq:true_Barabanov}
\max_{A \in \sA} \, \tribar{A x} = \varrho^\top(\sA) \, \tribar{x} \quad \text{for all } x \in \R^d,
\end{equation}
where $\varrho^\top(\sA) = e^{\lambda_1^\top(\sA)}$ is the joint spectral radius of $\sA$.\footnote{The definition of joint spectral radius of a (non-necessarily finite) set of matrices is analogous to the definition given previously for $k$-tuples of matrices.}
It is known that a Barabanov norm exists whenever $\sA$ is irreducible (i.e., has no nontrivial invariant subspace): see \cite{Barabanov,Wirth}.

For definiteness, let us consider finite sets $\sA \subset \GL(2,\R)$.
One may wonder about the existence of a version of the Barabanov for the
joint spectral subradius $\varrho^\bot(\sA) = e^{\lambda_1^\bot(\sA)}$,
that is, a norm $\tribar{\mathord{\cdot}}$ such that
\begin{equation}\label{eq:min_Barabanov}
\min_{A \in \sA} \, \tribar{A x} = \varrho^\bot(\sA) \, \tribar{x} \quad \text{for all } x \in \R^2.
\end{equation}
Unfortunately, no such norm can in general exist, even assuming irreducibility of $\sA$.
For example, if the cocycle is such that $\lambda_2^\top(\sA) < \lambda_1^\bot(\sA)$
then applying relation \eqref{eq:min_Barabanov}
to the orbit of a nonzero vector in the second Oseledets direction $e_2$
we reach a contradiction.

This example shows that if such a ``minimizer Barabanov norm'' exists,
relation \eqref{eq:min_Barabanov} cannot hold for all vectors,
but only for vectors away from the $e_2$-directions.
In general, the set of $e_2$-directions can be large or even the whole $\P^1$,
but for dominated cocycles it is a proper compact subset of $\P^1$.

As we show in this section, under the assumption of
domination it is indeed possible to construct an object that retains
the most useful properties of (the logarithm of) a ``minimizer Barabanov norm''.
For convenience, we simultaneously consider both
the maximizer and minimizer cases:

\begin{theorem}\label{t:Barabanov}
Assume that $(A_1, \dots, A_k) \in \GL(2,\R)^k$ generates a dominated one-step cocycle,
and let $\multicone \subset \R^2$ be a strictly forward-invariant multicone.
Then there exist functions
$$
p^\top \colon \multicone \to \R
\quad \text{and} \quad
p^\bot \colon \multicone \to \R
$$
with the following properties:
\begin{itemize}
\item extremality: for all $x \in \multicone$,
\begin{align}
\max_{i \in \{1,\dots, k\}} p^\top (A_i x) &= p^\top (x) + \lambda_1^\top  \, , \label{eq:ptop} \\
\min_{i \in \{1,\dots, k\}} p^\bot (A_i x) &= p^\bot (x) + \lambda_1^\bot  \, ; \label{eq:pbot}
\end{align}
\item $\log$-homogeneity:
for all $\star \in \{\top, \bot\}$, $x \in \multicone$, and $t \in \R_*$,
\begin{equation}
p^\star(tx) = p^\star(x)+ \log|t|   \, ; \label{eq:p_homog}
\end{equation}
\item regularity: there exists $c_0>0$ such that for all  $\star \in \{\top, \bot\}$
and $x$, $y \in \multicone$,
\begin{equation}
|p^\star(x) - p^\star(y)| \le c_0 \angle(x,y) + |\log \|x\| - \log\|y\| | \, . \label{eq:p_Lip}
\end{equation}
\end{itemize}
\end{theorem}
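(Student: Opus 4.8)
The plan is to build $p^\top$ and $p^\bot$ by a fixed-point / limit construction modeled on the classical existence proof for Barabanov norms, but carried out on the multicone $\multicone$ (equivalently on $\project{\multicone}$, using the $\log$-homogeneity to reduce to the projectivization) where domination gives us the contraction we need. Fix $\star\in\{\top,\bot\}$ and write $\lambda=\lambda_1^\star$. Consider the operator $\mathcal{L}^\star$ acting on functions $q\colon\multicone\to\R$ by
\begin{equation*}
(\mathcal{L}^\top q)(x) = \max_{i} \bigl(q(A_i x) - \lambda\bigr),
\qquad
(\mathcal{L}^\bot q)(x) = \min_{i} \bigl(q(A_i x) - \lambda\bigr),
\end{equation*}
both restricted to the space of functions satisfying the $\log$-homogeneity \eqref{eq:p_homog} and a uniform modulus-of-continuity bound of the form \eqref{eq:p_Lip} (with a fixed constant $c_0$ to be determined). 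A function $p^\star$ with the extremality property \eqref{eq:ptop}--\eqref{eq:pbot} is exactly a fixed point of $\mathcal{L}^\star$. First I would check that $\mathcal{L}^\star$ preserves this function space: $\log$-homogeneity is immediate from forward-invariance of $\multicone$ and the $\log$-homogeneity of $x\mapsto q(A_i x)$; the regularity estimate is preserved because $\project{A_i}$ contracts the adapted metric $d$ on $\project{\multicone}$ by the factor $\tau<1$ of \autoref{l:adapted}, while the radial part $\log\|A_i x\|-\log\|A_i y\|$ is controlled by $\log\|x\|-\log\|y\|$ plus an angular error (via \autoref{l:angle_derivative} and $C^1$-dependence of $\log\|A_i\cdot\|$ on the direction). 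Choosing $c_0$ large enough relative to $\tau$ and the matrices makes the modulus bound strictly decrease the angular part, so the space is invariant.

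Next I would obtain the fixed point. The natural candidate is the Cesàro or direct limit: starting from $q_0\equiv 0$ (or $q_0(x)=\log\|x\|$), the iterates $q_n=(\mathcal{L}^\star)^n q_0$ satisfy $q_{n+1}(x)-q_n(x)$ as products over length-$n$ words, and one shows $\{q_n - q_n(x_0)\}$ is equicontinuous (uniform modulus bound) and pointwise bounded, hence precompact by Arzelà–Ascoli on the compact space $\project{\multicone}$. A subsequential limit along $n\to\infty$ of $q_n - c_n$, for suitable normalizing constants $c_n$, will be a fixed point provided the normalization is chosen so that $c_{n+1}-c_n\to\lambda$; this is where one uses that $\lambda_1^\star$ is precisely the exponential growth/decay rate of the extremal products, i.e.\ \eqref{eq:alternative_top}--\eqref{eq:alternative_bot} together with \autoref{l:dom_comparison} relating $\|A^{(n)}(\omega)\|$ and $\|A^{(n)}(\omega)|e_1(\omega)\|$. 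Concretely, $\sup_x q_n(x)$ (resp.\ $\inf_x q_n(x)$) grows like $n\lambda + O(1)$, which pins down the additive normalization and forces the limit to satisfy $\mathcal{L}^\star p^\star = p^\star$, i.e.\ \eqref{eq:ptop}--\eqref{eq:pbot} exactly.

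The regularity \eqref{eq:p_Lip} for the limit is inherited from the uniform bound on the $q_n$. The main obstacle, and the step I would spend the most care on, is the normalization in the minimizer case $\star=\bot$: unlike the joint spectral radius, the subradius is delicate, and one must verify that $\inf_x (\mathcal{L}^\bot)^n q_0(x) = n\lambda_1^\bot + O(1)$ rather than drifting — this is exactly the place where domination (and the fact that $\multicone$ avoids the $e_2$-directions, so the obstruction from the earlier $\lambda_2^\top<\lambda_1^\bot$ example does not arise) is essential. One reduces this to the one-dimensional restriction $\|A^{(n)}(\omega)|e_1(\omega)\|$ via \autoref{c:reduce_1D}/\autoref{l:dom_comparison}: on $\multicone$, iterating $A_i$'s drives directions exponentially toward $e_1$-directions (by \autoref{l:adapted} and \autoref{l:dom_easy}), so $q_n(x)$ is comparable up to $O(1)$ to $\log\|A^{(n)}(\omega_x)|e_1\|$ for the optimal word $\omega_x$, and the infimum of the latter over words is $n\lambda_1^\bot+O(1)$ by \eqref{eq:alternative_bot}. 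Once this uniform two-sided bound is in hand, a standard diagonal/subsequence extraction produces $p^\bot$, and the same argument (with $\max$, and the easier estimate \eqref{eq:alternative_top}) produces $p^\top$.
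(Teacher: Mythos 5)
Your setup --- a max/min transfer operator acting on $\log$-homogeneous functions on $\multicone$, reduced via \eqref{eq:p_homog} to uniformly Lipschitz functions on the compact projectivization $\project{\multicone}$ equipped with the adapted metric of \autoref{l:adapted}, with the angular contraction factor $\tau<1$ making the Lipschitz class invariant --- is exactly the framework the paper uses, and that part of your argument is fine. The gap is in how you extract the fixed point. You iterate, normalize by constants $c_n$, and assert that an Arzel\`a--Ascoli subsequential limit of $q_n-c_n$ is a fixed point ``provided $c_{n+1}-c_n\to\lambda$''. This does not follow: the operator is only nonexpansive ($1$-Lipschitz in the sup norm), not a contraction modulo constants, and subsequential limits of iterates of a nonexpansive map need not be fixed points even when fixed points exist (a rotation of a disk about its center is an isometry whose only fixed point is the center, yet no orbit has the center as a limit point). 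To conclude $\mathcal{L}^\star p=p$ from $q_{n_j}-c_{n_j}\to p$ you would need $\|q_{n_j+1}-q_{n_j}\|_\infty\to 0$, and nothing in your argument supplies this. The paper sidesteps the issue entirely: it passes to the quotient of $C(\project{\multicone})$ by the constants, notes that the image $\hat\K$ of the Lipschitz class is compact and convex, and invokes the Schauder fixed point theorem to produce $T^\star f_0^\star=f_0^\star+\beta^\star$ for some a priori unknown constant $\beta^\star$.

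A second, related problem is the step you yourself flag as delicate: the two-sided bound $\inf_x (\mathcal{L}^\bot)^n q_0(x)=O(1)$, i.e.\ $\min_{|w|=n}\log\|A_w x\|=n\lambda_1^\bot+O(1)$ uniformly on $\project{\multicone}$. This is not a consequence of \eqref{eq:alternative_bot} together with \autoref{l:dom_comparison} and \autoref{c:reduce_1D}, which only give the $o(n)$ statement $\tfrac1n\log(\cdot)\to\lambda_1^\bot$; upgrading $o(n)$ to $O(1)$ for extremal Birkhoff sums of $\phi_1$ is a Ma\~n\'e--Conze--Guivarc'h type statement essentially equivalent to the existence of the Barabanov function, so as written your normalization step is circular. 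The paper instead identifies the eigenvalue \emph{after} the fixed point exists: defining $\psi^\star(\omega)=p^\star(A_{\omega_0}x)-p^\star(x)$ for $x\in e_1(\omega)$, it shows $\int\psi^\star\,d\mu=\lambda_1(\mu)$ for every invariant $\mu$ (via \eqref{eq:easy_comparison} and \eqref{eq:exponents_as_integrals}), deduces $\beta^\bot\le\lambda_1^\bot\le\lambda_1^\top\le\beta^\top$ from $\psi^\top\le\beta^\top$ and $\psi^\bot\ge\beta^\bot$, and closes the sandwich by constructing (Claim~\ref{cl:choose_future}) forward orbits along which $\psi^\star\equiv\beta^\star$, hence an invariant measure realizing $\lambda_1(\mu)=\beta^\star$. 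To repair your proof, either adopt the Schauder route, or replace the subsequence extraction by a genuine $\limsup$/$\liminf$ construction (using that $\limsup$ commutes with a finite max) and prove the uniform $O(1)$ bounds honestly via an almost-superadditivity argument based on domination.
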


Related functions were used by Bousch and Mairesse \cite[\S~2.1]{BM}.
Our construction combines their techniques with properties of multicones
and the Hilbert metric.
A higher-dimensional version of our construction (based on a preliminary version of this paper)
was obtained in \cite{BMo}.

Let us also mention that similar constructions also play an important
role on ergodic optimization,
action minimization in Lagrangian dynamics,
and optimal control: see \cite{BM} and references therein.

Back to the context of products of matrices, Morris \cite{Morris13} has studied relations
between the Barabanov norm \eqref{eq:true_Barabanov} and the upper Mather set.

In this paper, the upper and lower Barabanov functions are the main tools in the analysis of the upper and lower Mather sets.

\subsection{Proofs}

In the following proof of \autoref{t:Barabanov},
we will also establish some facts that are necessary for
the subsequent proof of \autoref{t:Mather}.

\begin{proof}[Proof of \autoref{t:Barabanov}]
For each $i \in \{1,\dots,k\}$, define $h_i \colon \P^1 \to \R$ by
$$
h_i (\project{x}) \coloneqq \log \frac{\|A_i x\|}{\|x\|} \, ,
$$
where, as before, a prime denotes projectivization and $\| \mathord{\cdot} \|$ denotes the Euclidian metric.
Fix a constant $c_2 > 0$ such that
$$
\left| h_i(\project{x}) - h_i(\project{y}) \right| \le c_2 \angle(x,y) \quad
\text{for all $x$, $y \in \R^2_*$ and all $i$.}
$$

Let $\multicone$ be a strictly forward-invariant multicone for $(A_1, \dots, A_k)$,
and let $d$ be the metric on the projectivization $\project{\multicone}$
given by \autoref{l:adapted}.
Let $\B$ be the vector space of continuous functions from $\project{\multicone}$ to $\R$,
endowed with the uniform (supremum) distance $| \mathord{\cdot} |_\infty$.
Let $c_3 \coloneqq c_1 c_2 / {(1-\tau)}$ and let
$\K \subset \B$ be the set of functions that are $c_3$-Lipschitz with respect to $d$.

For each function $f \in \K$, define two functions $T^\star f \colon \project{\multicone} \to \R$
(where $\star \in \{\top, \bot\}$) by
\begin{align*}
(T^\top f) (\project{x}) &\coloneqq \max_{i \in\{1,\dots,k\}} \left[ f\left( \project{A_i}\project{x}\right) + h_i(\project{x}) \right] \, , \\
(T^\bot f) (\project{x}) &\coloneqq \min_{i \in\{1,\dots,k\}} \left[ f\left( \project{A_i}\project{x}\right) + h_i(\project{x}) \right] \, .
\end{align*}
We claim that $T^\star f \in \K$.
Indeed, for all $\project{x}$, $\project{y} \in \project{\multicone}$, we have
\begin{align*}
\left| (T^\star f) (\project{x}) - (T^\star f) (\project{y}) \right|
&\le \max_i
\Big| \left[ f\left( \project{A_i}\project{x}\right) + h_i(\project{x}) \right]
    - \left[ f\left( \project{A_i}\project{y}\right) + h_i(\project{y}) \right] \Big| \\
&\le \max_i  \left| f\left( \project{A_i}\project{x}\right) - f\left( \project{A_i}\project{x}\right) \right|
   + \max_i \left| h_i(\project{x}) - h_i(\project{y}) \right| \\
&\le c_3 \max_i d \left( \project{A_i}\project{x} , \project{A_i}\project{y }\right) +  c_2 \angle(x,y) \\
&\le c_3 \tau d(\project{x},\project{y}) + c_1 c_2 d(\project{x},\project{y}) \\
&=   c_3 d(\project{x},\project{y}) \, .
\end{align*}
Thus we have defined maps $T^\star \colon \K \to \K$.
Next, we claim that these maps are continuous.
Indeed, for all $f$, $g \in \K$, 
we have
\begin{align*}
|T^\star f - T^\star g|_\infty
&=   \sup_{\project{x} \in \project{\multicone}} \left| (T^\star f) (\project{x}) - (T^\star g) (\project{x}) \right| \\
&\le \sup_{\project{x} \in \project{\multicone}} \max_i  \left| f\left( \project{A_i}\project{x}\right) - g \left( \project{A_i}\project{x}\right) \right| \\
&\le |f-g|_\infty \, .
\end{align*}

Let $\hat \B$ be the quotient of the space $\B$ by the subspace of constant functions;
it is a Banach space endowed with the quotient norm
$|\hat f|_\infty \coloneqq \inf \{|f|_\infty ; \; \pi(f)=\hat{f}\}$,
where $\pi \colon \B \to \hat \B$ denotes the quotient projection.
By the Arzel\`a--Ascoli theorem, the convex set $\hat\K \coloneqq \pi(\K)$ is compact.
Since $T^\star$ commutes with the addition of a constant, there exists a map
$\hat T^\star \colon \hat \K \to \hat \K$ such that $\pi \circ T^\star = \hat T^\star \circ \pi$.
The map $\hat T^\star$ is continuous;
in particular, by the Schauder theorem, it has a fixed point $\hat{f}_0^\star$.
This means that there exist $f_0^\star \in \K$ and $\beta^\star \in \R$ such that
$T^\star f_0^\star = f_0^\star + \beta^\star$.
Define
$$
p^\star(x) \coloneqq f_0^\star(\project{x}) + \log \|x\| \quad \text{for all } x \in \multicone.
$$
Note that for every $x \in \multicone$,
the following properties hold: property \eqref{eq:p_homog},
\begin{align}
\max_{i \in \{1,\dots, k\}} p^\top (A_i x)  &= p^\top (x) + \beta^\top  \, , \label{eq:disguise_top} \\
\min_{i \in \{1,\dots, k\}} p^\bot (A_i x)  &= p^\bot (x) + \beta^\bot  \, , \label{eq:disguise_bot}
\end{align}
and
\begin{equation}\label{eq:easy_comparison}
\Big|  p^\star (x) - \log \|x\| \Big| \le c_4
\quad
\end{equation}
where $c_4 \coloneqq \max \big( |f_0^\top|_\infty \, , |f_0^\bot|_\infty \big)$.

Taking $c_0 = c_1 c_3$, we see that property \eqref{eq:p_Lip} holds.
To complete the proof of \autoref{t:Barabanov} we need to show that
the numbers $\beta^\top$ and $\beta^\bot$
that appear in \eqref{eq:disguise_top} and \eqref{eq:disguise_bot}
are respectively equal to the numbers
$\lambda_1^\top$ and $\lambda_1^\bot$ that appear in \eqref{eq:ptop} and \eqref{eq:pbot}.
As we prove these equalities, we will also establish
some facts that will be useful in the forthcoming proof of \autoref{t:Mather}.

\medskip

For each $\star \in \{ \top, \bot \}$, let us define
a function $\psi^\star \colon k^\Z \to \R$ by
\begin{equation}\label{eq:def_psi}
\psi^\star (\omega) \coloneqq p^\star (A_{\omega_0} x) - p^\star(x) \, ,
\quad \text{where $x \in e_1(\omega) \setminus \{0\}$ is arbitrary.}
\end{equation}
By \autoref{p:nested} we have $e_1(\omega) \subset M$, so
the expression above makes sense,
and by \eqref{eq:p_homog} it does not depend on the choice of $x$;
thus $\psi^\star$ is well-defined.
Moreover it is a continuous function.

By equivariance of the $e_1$ direction,
for every $\omega \in k^\Z$, $x \in e_1(\omega) \setminus \{0\}$, and $n\ge 1$ we have
$$
p^\star (A^{(n)}(\omega) x) - p^\star(x) = \sum_{j=0}^{n-1} \psi^\star(T^j \omega) .
$$
Letting $\phi_1(\omega) \coloneqq \log \| A(\omega) | e_1(\omega) \|$,
it follows from \eqref{eq:easy_comparison} that
$$
-2c_4 \le \sum_{j=0}^{n-1} \psi^\star(T^j \omega)  - \sum_{j=0}^{n-1} \phi_1^\star(T^j \omega) \le 2c_4 \, .
$$
Integrating with respect to some $\mu \in \cM_T$,
dividing by $n$, and making $n \to \infty$,
we conclude that $\int \psi^\star \, d\mu = \int \phi_1 \, d\mu$.
Recalling the integral formula \eqref{eq:exponents_as_integrals}
(proved in \autoref{ss:preliminaries_general}),
we conclude that
$$
\lambda_1(\mu) = \int \psi^\star \, d\mu \quad \text{for any $\mu \in \cM_T$.}
$$
On the other hand, by \eqref{eq:disguise_top} and \eqref{eq:disguise_bot}, we have
$$
\psi^\top \le \beta^\top \quad \text{and} \quad
\psi^\bot \ge \beta^\bot \, ,
$$
which in particular implies that
\begin{equation}\label{eq:sandwich}
\beta^\bot \le \lambda_1^\bot \le \lambda_1^\top \le \beta^\top \, .
\end{equation}
Moreover, for any $\mu \in \cM_T$,
we have $\lambda_1(\mu) = \beta^\star$ if and only if
$\psi^\star = \beta^\star$ $\mu$-almost everywhere,
or equivalently, if the $T$-invariant set
\begin{equation}\label{eq:def_L}
L^\star \coloneqq \{ \omega \in k^\Z ; \; \psi^\star(T^n \omega) = \beta^\star \ \forall \, n \in \Z \}
\end{equation}
has total $\mu$-measure.

We will show that $L^\star$ is compact and nonempty.
We begin by showing the following:

\begin{claim}\label{cl:choose_future}
For any $\omega_- \in k^{\Z_-}$
there exists $\omega_+ \in k^{\Z_+}$ such that
if $\omega = \omega_- \omega_+$ is concatenation of $\omega_-$ and $\omega_+$
then $\psi^\star(T^n \omega) = \beta^\star$ for all $n \ge 0$.
\end{claim}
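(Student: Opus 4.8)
The plan is to build the future word $\omega_+$ greedily, one symbol at a time, always choosing the index that realizes the extremum in the relevant relation among \eqref{eq:disguise_top}--\eqref{eq:disguise_bot}.

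First I would note that, by \autoref{c:factor_projections}, the direction $e_1(\omega)$ depends only on $\pi_-(\omega)$; hence once the past $\omega_-$ is fixed, the line $e_1(\omega_-\omega_+)$ is already determined and does not change as $\omega_+$ varies. Moreover, by \autoref{p:nested}, this line lies inside the forward-invariant multicone $\multicone$, so $p^\star$ is defined on it. Fix a nonzero vector $x_0$ on this line.

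Next I would define symbols $\omega_0,\omega_1,\dots\in\{1,\dots,k\}$ and vectors $x_0,x_1,\dots$ by recursion. Given $x_n\in\multicone\setminus\{0\}$, relation \eqref{eq:disguise_top} (in the case $\star=\top$) or \eqref{eq:disguise_bot} (in the case $\star=\bot$) provides an index $i_n$ with $p^\star(A_{i_n}x_n)=p^\star(x_n)+\beta^\star$; set $\omega_n\coloneqq i_n$ and $x_{n+1}\coloneqq A_{i_n}x_n$. Put $\omega\coloneqq\omega_-\omega_+$ with $\omega_+=(\omega_0,\omega_1,\dots)$. An immediate induction using equivariance \eqref{eq:equivariance} shows $x_n\in e_1(T^n\omega)\setminus\{0\}$ for all $n\ge0$ (the base case is the choice of $x_0$, and the step is $x_{n+1}=A(T^n\omega)x_n\in A(T^n\omega)\big(e_1(T^n\omega)\big)=e_1(T^{n+1}\omega)$, with $x_{n+1}\neq 0$ since $A_{i_n}$ is invertible); in particular $x_n\in\multicone$ by \autoref{p:nested}, so each application of \eqref{eq:disguise_top}--\eqref{eq:disguise_bot} above is legitimate. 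Finally, computing $\psi^\star$ at $T^n\omega$ with the representative $x_n\in e_1(T^n\omega)\setminus\{0\}$ allowed in the definition \eqref{eq:def_psi} gives
\[
\psi^\star(T^n\omega)=p^\star\big(A_{\omega_n}x_n\big)-p^\star(x_n)=p^\star(x_{n+1})-p^\star(x_n)=\beta^\star,
\]
as desired.

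I do not expect a real obstacle: the argument is a routine greedy construction, with no circularity since $\psi^\star(T^n\omega)$ depends only on $\omega_-$ and the finitely many digits $\omega_0,\dots,\omega_n$ already chosen. The only points needing attention are that $e_1(\omega)$ is well-defined before the future is chosen --- this is precisely \autoref{c:factor_projections} --- and that the iterates $x_n$ remain inside $\multicone$ so that the extremality relations \eqref{eq:disguise_top}--\eqref{eq:disguise_bot} can be invoked at every step, which is guaranteed by the invariance of the multicone (\autoref{p:nested}).
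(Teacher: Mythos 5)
Your proposal is correct and follows essentially the same greedy, symbol-by-symbol construction as the paper: use \autoref{c:factor_projections} to fix $e_1$ from the past alone, then repeatedly choose the letter realizing the extremum in \eqref{eq:disguise_top} or \eqref{eq:disguise_bot}. If anything, your write-up is slightly more careful than the paper's, since you correctly invoke \eqref{eq:disguise_top}--\eqref{eq:disguise_bot} (with $\beta^\star$) rather than \eqref{eq:ptop}--\eqref{eq:pbot}, which at that stage of the argument are not yet established.
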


\begin{proof}[Proof of the claim]
Recall from \autoref{c:factor_projections}
that a semi-infinite word
$\omega_- = (\dots, \omega_{-2}, \omega_{-1})$,
determines a direction $\tilde{e}_1(\omega_-)$,
and by \eqref{eq:ptop} or \eqref{eq:pbot}
there exists a letter $\omega_0$ such that
$\psi^\star (\omega)$ (which is well-defined even if $\omega_1$, $\omega_2$, \dots are still undefined)
equals $\beta^\star$.
Next we consider the shifted word $(\dots, \omega_{-1}, \omega_{0})$,
and repeat the reasoning above to find $\omega_1$ such that $\psi^\star(T \omega) = \beta^\star$.
Continuing by induction, we find the desired $\omega_+$, thus proving the claim.
\end{proof}

Let $L^\star_+$ be the set of $\omega \in k^\Z$ such that
$\psi^\star(T^n \omega) = \beta^\star$ for all $n \ge 0$,
which by \autoref{cl:choose_future} is nonempty.
Since $L^\star_+$ is compact and contains $T(L^\star_+)$, the set
$L^\star = \bigcap_{n \ge 0} T^n (L^\star_+)$ is compact and nonempty,
as announced.
In particular, there exists at least one $T$-invariant probability measure $\mu^\star$
supported on $L^\star$,
and so with $\lambda_1(\mu^\star) = \beta^\star$.
Together with \eqref{eq:sandwich} this implies that $\beta^\star = \lambda_1^\star$.
So  \eqref{eq:ptop} and \eqref{eq:pbot}
respectively follow from \eqref{eq:disguise_top} and \eqref{eq:disguise_bot}
and the proof of \autoref{t:Barabanov} is complete.
\end{proof}

\begin{proof}[Proof of \autoref{t:Mather}]
For each $\star \in \{ \top, \bot\}$,
let $\cM_T^\star$ be the set of measures $\mu \in \cM_T$ such that $\lambda(\mu) = \lambda_1^\star$.
We have seen in the proof of \autoref{t:Barabanov} that there exists a nonempty compact $T$-invariant set
$L^\star$ such that $\mu \in \cM_T^\star$ if and only if $\supp \mu \subset L^\star$.

Define the Mather set $K^\star$ as the union of the supports of
all measures $\mu$ in $\cM_T^\star$, so
\begin{equation}\label{eq:Mather_included}
K^\star \subset L^\star \, .
\end{equation}
To show that $K^\star$ is a compact set, we follow an argument from \cite{Morris13}.
The set of all Borel probabilities on $k^\Z$ with the usual weak-star topology
is metrizable and compact, and $\cM_T$ is a compact subset.
Since $L^\star$ is compact, using Urysohn's lemma we see that
the set $\cM_T^\star$ is also compact.
In particular, it has a countable dense sequence $(\nu_n^\star)$.
Consider $\nu^\star \coloneqq \sum 2^{-n} \nu_n^\star$, which is an element of $\cM_T^\star$.
It is then easy to show that $\supp \nu^\star = K^\star$,
which in particular shows that $K^\star$ is compact.

The remaining assertions in \autoref{t:Mather} are now obvious, and the proof is complete.
\end{proof}

\section{Properties of Lyapunov-optimal orbits}\label{s:geometry}

In this section we explore consequences of \autoref{t:Barabanov}.
Let us remark that the results of this section do not require the nonoverlapping condition.

Fix a dominated one-step cocycle with generator $(A_1, \dots, A_k)$,
a strictly forward-invariant multicone $\multicone$,
and Barabanov functions $p^\top$, $p^\bot$ on $\multicone$.

\subsection{Geometrical obstructions}

In this subsection, we will show that the invariant directions
of points on the Mather sets must obey certain geometrical obstructions.

We begin by considering sets of optimal future trajectories.
For each $\star \in \{\top, \bot\}$, let
$$
J^\star \coloneqq \big\{ (\omega_+,x) \in k^{\Z_+} \times \multicone; \;
p^\star (A^{(n)}(\omega_+) x) = p^\star(x) + n \lambda^\star_1 \ \forall n \ge 0 \big\} \, .
$$
Since the functions $p^\star$ are continuous, these sets are closed.

Notice that, as a consequence of properties \eqref{eq:ptop} and \eqref{eq:pbot} of the Barabanov functions, the following holds:
$$
\forall \, x \in \multicone \ \exists \, \omega_+ \in k^{\Z_+} \text{ such that } (\omega_+,x) \in J^\star \, .
$$

\begin{lemma} \label{l:ineq_barabanov}
If $(\omega_+,x) \in J^\star$ and $y \in \multicone$ are such that $x-y \in \tilde{e}_2(\omega_+)$ (see \autoref{f:xy})
then:
\begin{alignat*}{2}
p^\top (x) &\le p^\top (y) &\quad &\text{if $\star = \top$,}\\
p^\bot (x) &\ge p^\bot (y) &\quad &\text{if $\star = \bot$.}
\end{alignat*}
\end{lemma}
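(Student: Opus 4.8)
\emph{Plan.} The idea is to run both $x$ and the nearby point $y$ forward along the word $\omega_+$ and to exploit that their difference $x-y$ lies in the contracting direction $\tilde{e}_2(\omega_+)$: the regularity estimate \eqref{eq:p_Lip} will then force the two $p^\star$-orbits to become indistinguishable, while \eqref{eq:ptop}--\eqref{eq:pbot} control how fast each of them grows. First I would dispose of the trivial case $x=y$ and put $v \coloneqq x-y \ne 0$, a vector spanning the line $\tilde{e}_2(\omega_+)$. Choose any $\omega \in k^\Z$ with $\pi_+(\omega)=\omega_+$; then $A^{(n)}(\omega)$ depends only on $\omega_0,\dots,\omega_{n-1}$, so it is the product indexed by $\omega_+$, and by \autoref{c:factor_projections} we have $e_2(\omega)=\tilde{e}_2(\omega_+)$, hence $v \in e_2(\omega)$. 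Since $\multicone$ is forward-invariant, $A^{(n)}(\omega)x$ and $A^{(n)}(\omega)y$ stay in $\multicone$ for all $n$, so $p^\star$ is defined along both orbits. I would also record that $x \notin e_2(\omega)$: by \autoref{p:nested} and the backwards-invariance of $\multiconedual$, the direction $e_2(\omega)$ lies in the interior of $\project{\multiconedual}$, which is disjoint from $\project{\multicone} \ni \project{x}$.

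The second ingredient is the one-sided telescoping coming from the extremality of the Barabanov functions. Iterating \eqref{eq:ptop} gives $p^\top(A^{(n)}(\omega)z) \le p^\top(z) + n\lambda_1^\top$ for every $z \in \multicone$ and $n \ge 0$, with equality for $z=x$ and all $n$ precisely because $(\omega_+,x) \in J^\top$. Subtracting,
$$
p^\top(x) - p^\top(y) \;\le\; p^\top\bigl(A^{(n)}(\omega)x\bigr) - p^\top\bigl(A^{(n)}(\omega)y\bigr) \qquad \text{for all } n \ge 0 ,
$$
and, symmetrically, iterating \eqref{eq:pbot},
$$
p^\bot(x) - p^\bot(y) \;\ge\; p^\bot\bigl(A^{(n)}(\omega)x\bigr) - p^\bot\bigl(A^{(n)}(\omega)y\bigr) \qquad \text{for all } n \ge 0 .
$$

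It remains to let $n\to\infty$. Since $v \in e_2(\omega)$ while $x \notin e_2(\omega)$, domination \eqref{eq:domination_via_splitting} together with \autoref{l:dom_easy} gives $\|A^{(n)}(\omega)v\| \big/ \|A^{(n)}(\omega)x\| \to 0$. Writing $A^{(n)}(\omega)y = A^{(n)}(\omega)x - A^{(n)}(\omega)v$, this forces both $\angle\bigl(A^{(n)}(\omega)x, A^{(n)}(\omega)y\bigr) \to 0$ and $\bigl|\log\|A^{(n)}(\omega)x\| - \log\|A^{(n)}(\omega)y\|\bigr| \to 0$, so by \eqref{eq:p_Lip} we get $\bigl|p^\star(A^{(n)}(\omega)x) - p^\star(A^{(n)}(\omega)y)\bigr| \to 0$ for each $\star \in \{\top,\bot\}$. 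Passing to the limit in the two displayed inequalities yields $p^\top(x) \le p^\top(y)$ and $p^\bot(x) \ge p^\bot(y)$, as claimed.

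Every computation here is routine; the one point to get right is the built-in asymmetry of the argument — equality along the orbit of $x$ is available only because $(\omega_+,x) \in J^\star$, whereas for $y$ we have merely the one-sided extremality inequality — together with the (harmless) observation that ``$x-y \in \tilde{e}_2(\omega_+)$'' is a condition on the forward coordinates alone, so extending $\omega_+$ to a bi-infinite word affects neither $A^{(n)}(\omega)$ nor $e_2(\omega)$. I do not expect any genuine obstacle beyond this bookkeeping.
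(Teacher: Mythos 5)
Your proposal is correct and follows essentially the same route as the paper's proof: iterate the one-sided extremality inequalities to get the monotone comparison $p^\star(x_n)-p^\star(y_n)$ versus $p^\star(x)-p^\star(y)$, then use \autoref{l:dom_easy} (difference in the $e_2$-direction) together with the regularity estimate \eqref{eq:p_Lip} to show $p^\star(x_n)-p^\star(y_n)\to 0$. The only cosmetic difference is that you extend $\omega_+$ to a bi-infinite word and invoke \autoref{c:factor_projections}, whereas the paper works directly with $\omega_+$; this changes nothing.
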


\begin{figure}[htb]
	\centering
	\begin{tikzpicture}[scale=2.3]
		\draw[thin]  (-.125,-.25) -- (.5,1) node[right] {\scriptsize $e_2(\omega)$};
		\draw[-stealth,thick]      (0,0) -- (1,0)   node[right]{\scriptsize $x \in M$ (optimal w.r.t.\ $\omega_+$)};
		\draw[dashed]  (.875,-.25) -- (1.5,1);
		\draw[-stealth,thick]      (0,0) -- (1.25,.5)  node[right]{\scriptsize $y\in M$};
	\end{tikzpicture}
	\caption{\autoref{l:ineq_barabanov}}
	\label{f:xy}
\end{figure}

\begin{proof}
Let $\omega_+ \in k^{\Z_+}$  and  $x$, $y \in \multicone$ be such that $x-y \in \tilde{e}_2(\omega_+)$.
Let $x_n \coloneqq A^{(n)}(\omega_+)x$ and $y_n \coloneqq A^{(n)}(\omega_+)y$, for $n \ge 0$.
Since $x$, $y \not\in \tilde{e}_2(\omega_+)$,
it follows from \autoref{l:dom_easy}
that the quantities
\begin{equation}\label{eq:vanishing}
\frac{\|x_n - y_n\|}{\|x_n\|} , \quad
\frac{\|x_n - y_n\|}{\|y_n\|} , \quad \text{and} \quad
\angle(x_n,y_n) \quad
\text{tend to $0$ as $n \to \infty$.}
\end{equation}

Let us now show that
\begin{equation}\label{eq:more_vanishing}
\lim_{n \to \infty} \big[ p^\star(y_n)  - p^\star(x_n) \big] = 0 \, .
\end{equation}
Indeed, by property \eqref{eq:p_Lip} of Barabanov functions,
$$
\big| p^\star(y_n)  - p^\star(x_n) \big| \le
c_0 \angle(y_n, x_n) + \big| \log\|y_n\| - \log\|x_n\| \big| \, .
$$
The first term tends to zero as $n \to \infty$.
The second term can be estimated as:
$$
\big| \log\|y_n\| - \log\|x_n\| \big|
\le \max \left( \frac{\|y_n\|}{\|x_n\|}-1, \frac{\|x_n\|}{\|y_n\|}-1 \right)
\le \frac{\|x_n - y_n\|}{\min \left( \|x_n\| , \|y_n\| \right)} ,
$$
which by \eqref{eq:vanishing} tends to zero as well.
This proves \eqref{eq:more_vanishing}.

Next, assume $(\omega_+,x) \in J^\star$.
So, for all $n \ge 0$,
\[
p^\star(x_n) = p^\star(x) + n\lambda^\star_1 \, .
\]
By properties \eqref{eq:ptop} and \eqref{eq:pbot} we have:
\begin{alignat*}{2}
p^\top(y_n) &\le p^\top(y) + n\lambda^\top_1 &\quad &\text{if $\star = \top$,} \\
p^\bot(y_n) &\ge p^\bot(y) + n\lambda^\bot_1 &\quad &\text{if $\star = \bot$.}
\end{alignat*}
In particular,
\begin{alignat*}{2}
p^\top(y_n)  - p^\top(x_n) &\le p^\top(y) - p^\top(x) &\quad &\text{if $\star = \top$,} \\
p^\bot(y_n)  - p^\bot(x_n) &\ge p^\bot(y) - p^\bot(x) &\quad &\text{if $\star = \bot$.}
\end{alignat*}
Taking limits as $n\to \infty$ and recalling \eqref{eq:more_vanishing}
we obtain the lemma.
\end{proof}

Given vectors $x_1$, $y_1$, $x_2$, $y_2\in \R^2_*$, no three of them collinear,
we define their \emph{cross-ratio}
\[
[x_1, y_1; x_2, y_2] \coloneqq \frac{x_1\times x_2}{x_1 \times y_2} \cdot \frac{y_1\times y_2}{y_1 \times x_2} \in \R \cup \{\infty\} \, ,
\]
where $\times$ denotes cross-product in $\R^2$, i.e.\ determinant. See \cite[Section I.6]{BK}.
The cross-ratio actually depends only on the directions defined by the four vectors, which allows us to apply the same definition to $4$-tuples in $(\P^1)^4$
without three coinciding points.
Moreover, the cross-ratio is invariant under linear transformations.

\smallskip

We now use \autoref{l:ineq_barabanov} to prove the following important \autoref{l:ineq_cross_ratio_J},
whose character is similar to Proposition~2.6 from \cite{BM}:

\begin{lemma} \label{l:ineq_cross_ratio_J}
For all $(\xi,x_1)$, $(\eta,y_1) \in J^\star$
and nonzero vectors $x_2\in \tilde{e}_2(\xi)$, $y_2\in \tilde{e}_2(\eta)$ we have
\begin{alignat*}{2}
|[x_1, y_1; x_2, y_2]| &\ge 1 &\quad &\text{if $\star = \top$,} \\
|[x_1, y_1; x_2, y_2]| &\le 1 &\quad &\text{if $\star = \bot$.}
\end{alignat*}
\end{lemma}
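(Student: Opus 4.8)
The plan is to reduce the cross-ratio inequality to the monotonicity of Barabanov functions established in Lemma~\ref{l:ineq_barabanov}. First I would set up the geometric picture: given $(\xi,x_1),(\eta,y_1)\in J^\star$ and nonzero $x_2\in\tilde e_2(\xi)$, $y_2\in\tilde e_2(\eta)$, these four directions are in general position (three of them coinciding would force, e.g., $x_1\in\tilde e_2(\xi)$, contradicting $x_1\in M$ together with $\tilde e_2(\xi)\notin M'$; the degenerate cases can be disposed of separately or handled by the convention that the cross-ratio is $\infty$, which satisfies the $\star=\top$ inequality trivially and must be ruled out for $\star=\bot$). Rescaling the vectors does not change the cross-ratio, so I am free to choose convenient representatives.

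The key step is to produce, from $x_1$ and $y_2\in\tilde e_2(\eta)$, a vector $\tilde x_1$ with $\tilde x_1 - y_1 \in \tilde e_2(\eta)$ so that Lemma~\ref{l:ineq_barabanov} applies with the pair $(\eta,y_1)$: namely $\tilde x_1=y_1+s y_2$ for the unique $s\in\R$ making $\tilde x_1$ proportional to $x_1$ (this $s$ is finite and nonzero precisely because of general position). By $\log$-homogeneity \eqref{eq:p_homog}, $p^\star(\tilde x_1) = p^\star(x_1) + \log|t|$ where $\tilde x_1 = t x_1$. Symmetrically, I would produce $\tilde y_1 = x_1 + r x_2$ proportional to $y_1$, say $\tilde y_1 = u y_1$, and apply Lemma~\ref{l:ineq_barabanov} with the pair $(\xi,x_1)$. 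For $\star=\top$ the two applications give $p^\top(x_1)\le p^\top(\tilde x_1/t)\cdot$-type comparisons; more precisely they yield $p^\top(x_1)\le p^\top(y_1)+\log|t|$ and $p^\top(y_1)\le p^\top(x_1)+\log|u|$ (after unwinding homogeneity), hence $-\log|tu|\le 0$, i.e.\ $|tu|\ge 1$. The final step is the linear-algebra identity expressing $tu$ in terms of cross-products: writing everything in coordinates, $t$ and $u$ are ratios of $2\times 2$ determinants, and one checks that $tu = \pm[x_1,y_1;x_2,y_2]^{\pm1}$ (the precise form dictating that $|tu|\ge1$ becomes $|[x_1,y_1;x_2,y_2]|\ge1$, matching the stated orientation). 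The $\star=\bot$ case is identical with all inequalities reversed.

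The main obstacle I anticipate is bookkeeping rather than conceptual: getting the cross-product identity $tu=\pm[x_1,y_1;x_2,y_2]$ with the correct exponent and sign so that the direction of the inequality comes out as stated (and not inverted). Concretely, from $\tilde x_1=y_1+sy_2=tx_1$ one reads $t = (y_1\times y_2)/(x_1\times y_2)$ by taking cross products with $y_2$, and from $\tilde y_1 = x_1+rx_2=uy_1$ one reads $u=(x_1\times x_2)/(y_1\times x_2)$; multiplying gives exactly $tu=[x_1,y_1;x_2,y_2]$ with the convention in the lemma, so no inversion occurs. The other point needing care is confirming that Lemma~\ref{l:ineq_barabanov} is genuinely applicable — it requires $\tilde x_1, y_1\in M$ (so that $p^\star$ is defined on them) and $\tilde x_1-y_1\in\tilde e_2(\eta)$; the second is by construction, and for the first one uses that $x_1\in M$ together with the fact that $M$ is a union of cones, hence closed under positive and negative scaling, so $\tilde x_1 = t x_1\in M$. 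A brief remark handling the general-position hypotheses and the possibility of vanishing/infinite cross-ratio completes the argument.
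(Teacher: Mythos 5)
Your proposal is correct and follows essentially the same route as the paper: the paper writes $x_1=\alpha x_2+\beta y_1$ and $y_1=\gamma y_2+\delta x_1$ (your $u=\beta$, $t=\delta$), applies Lemma~\ref{l:ineq_barabanov} twice, and identifies $\beta\delta$ with the cross-ratio exactly as you do. The only differences are cosmetic (chaining the two inequalities versus adding them, and explicitly flagging the degenerate/general-position cases, which the paper disposes of via \autoref{p:nested}).
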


\begin{proof}
Let us consider the case of $J^\top$; the other case is analogous.

Recall from \autoref{p:nested} that any $e_1$ direction is different from any $e_2$ direction.
So, neither $x_1$ nor $y_1$ can be collinear to $x_2$ or $y_2$.
Hence the cross-ratio is well defined.
Moreover, we can write:
$$
x_1 = \alpha x_2 + \beta y_1 \quad\text{and}\quad
y_1 = \gamma y_2 + \delta x_1.
$$
By \autoref{l:ineq_barabanov},
\[
p^\top (x_1) \leq p^\top (\beta y_1) \leq p^\top (\beta \delta x_1) = p^\top (x_1) + \log |\beta\delta|.
\]
Hence, $|\beta \delta| \geq 1$.
Substituting
$$
\beta  = \frac{x_1\times x_2}{y_1 \times x_2} \quad\text{and}\quad
\delta = \frac{y_1 \times y_2}{x_1 \times y_2}
$$
we obtain the assertion.
\end{proof}

The sets $J^\star$ are related with the Mather sets $K^\star$.
Indeed, as a consequence of the inclusion \eqref{eq:Mather_included}
and the definitions \eqref{eq:def_L} and \eqref{eq:def_psi}, we have:
$$ 
\left.
\begin{array}{l}
\omega \in K^\star \\
x \in e_1(\omega)\setminus \{0\}
\end{array}
\right\}
\ \Rightarrow \
(\pi_+(\omega),x) \in J^\star \, .
$$ 
where $\pi_+$ is the projection defined by \eqref{eq:pi+}.
Therefore we immediately obtain
the following consequence of \autoref{l:ineq_cross_ratio_J}:

\begin{lemma}\label{l:ineq_cross_ratio_K}
If $\xi$, $\eta \in K^\star$ then
\begin{alignat*}{2}
\big| [e_1(\xi), e_1(\eta) ; e_2(\xi), e_2(\eta)] \big| &\ge 1 &\quad &\text{if $\star = \top$,} \\
\big| [e_1(\xi), e_1(\eta) ; e_2(\xi), e_2(\eta)] \big| &\le 1 &\quad &\text{if $\star = \bot$.}
\end{alignat*}
\end{lemma}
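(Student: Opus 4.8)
The plan is to obtain \autoref{l:ineq_cross_ratio_K} as an immediate consequence of \autoref{l:ineq_cross_ratio_J}: the only thing to check is that every point of a Mather set $K^\star$, together with any vector on its $e_1$-direction, gives a point of $J^\star$ --- which is exactly the implication displayed just before the statement.

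First I would establish that implication carefully. Let $\omega \in K^\star$ and $x \in e_1(\omega)\setminus\{0\}$; note $x \in \multicone$ by \autoref{p:nested}, so $p^\star(x)$ makes sense. By \eqref{eq:Mather_included} we have $\omega \in L^\star$, so the definition \eqref{eq:def_L} of $L^\star$ together with the equality $\beta^\star = \lambda_1^\star$ proved at the end of the proof of \autoref{t:Barabanov} gives $\psi^\star(T^j\omega) = \lambda_1^\star$ for all $j \ge 0$. Substituting this into the telescoping identity
\[
p^\star\big(A^{(n)}(\omega) x\big) - p^\star(x) = \sum_{j=0}^{n-1} \psi^\star(T^j\omega)
\]
(established in the proof of \autoref{t:Barabanov} using equivariance of the $e_1$-direction) yields $p^\star(A^{(n)}(\omega)x) = p^\star(x) + n\lambda_1^\star$ for every $n \ge 0$. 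Since $A^{(n)}(\omega)$ depends only on $\pi_+(\omega)$, this is precisely the statement that $(\pi_+(\omega), x) \in J^\star$.

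With that in hand, I would take $\xi, \eta \in K^\star$ and pick nonzero $x_1 \in e_1(\xi)$, $y_1 \in e_1(\eta)$, $x_2 \in e_2(\xi)$, $y_2 \in e_2(\eta)$. By \autoref{c:factor_projections}, $e_2(\xi) = \tilde{e}_2(\pi_+(\xi))$ and $e_2(\eta) = \tilde{e}_2(\pi_+(\eta))$, so $x_2$ and $y_2$ lie on the required $\tilde{e}_2$-directions; and by the implication just proved, $(\pi_+(\xi), x_1)$ and $(\pi_+(\eta), y_1)$ lie in $J^\star$. Hence \autoref{l:ineq_cross_ratio_J}, applied to these two pairs and the vectors $x_2$, $y_2$, gives $|[x_1, y_1; x_2, y_2]| \ge 1$ when $\star = \top$ and $|[x_1, y_1; x_2, y_2]| \le 1$ when $\star = \bot$. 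Since the cross-ratio depends only on the four directions, $[x_1, y_1; x_2, y_2] = [e_1(\xi), e_1(\eta); e_2(\xi), e_2(\eta)]$, which is the assertion.

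I do not expect a real obstacle here: the lemma is \autoref{l:ineq_cross_ratio_J} transcribed onto the Mather sets, and the one substantive point --- that membership in $K^\star$ forces forward optimality of $p^\star$ along the $e_1$-orbit --- is just an unwinding of the definitions of $L^\star$ and $\psi^\star$ from \autoref{s:Barabanov}.
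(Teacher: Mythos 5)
Your proposal is correct and follows exactly the route the paper takes: the lemma is deduced from \autoref{l:ineq_cross_ratio_J} via the implication that $\omega \in K^\star$ and $x \in e_1(\omega)\setminus\{0\}$ force $(\pi_+(\omega),x) \in J^\star$, which the paper likewise justifies by combining the inclusion \eqref{eq:Mather_included} with the definitions \eqref{eq:def_L} and \eqref{eq:def_psi}. Your unwinding of that implication through the telescoping identity for $\psi^\star$ is precisely the content the paper leaves implicit, so there is nothing to add.
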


In the rest of the section we will use the hyperbolic geometry representation of the projective space $\P^1$. Consider the
unit disk $\D \coloneqq \{ z \in \C ; \; |z| < 1 \}$ endowed with the Poincar\'e hyperbolic metric.
Given two different points $x_1$, $x_2$ in the unit circle $\partial \D$, let
$\overrightarrow{x_2 x_1}$ denote the oriented hyperbolic geodesic from $x_2$ to $x_1$.
We identify $\partial \D$ with the projective space $\P^1$ as follows:
$$
e^{2 \theta i} \in \partial \D \ \leftrightarrow \
\project{(\cos \theta , \sin \theta)} \in \P^1 \, .
$$

Let $(x_1, y_1 ; x_2, y_2)$ be a $4$-tuple of distinct points in $\P^1$.
Then one and only one of the following possibilities holds:
\begin{itemize}
\item \emph{antiparallel configuration}: $x_1 < y_2 < y_1 < x_2 < x_1$ for some cyclic order $<$ on $\P^1$ (see \autoref{f:antipar});
\item \emph{coparallel configuration}:   $x_1 < y_1 < y_2 < x_2 < x_1$ for some cyclic order $<$ on $\P^1$ (see \autoref{f:copar});
\item \emph{crossing configuration}:     $x_1 < y_1 < x_2 < y_2 < x_1$ for some cyclic order $<$ on $\P^1$ (see \autoref{f:cross}).
\end{itemize}
We say that two geodesics
$\overrightarrow{x_2 x_1}$ and $\overrightarrow{y_2 y_1}$ with distinct endpoints
are \emph{antiparallel}, \emph{coparallel}, or \emph{crossing}
according to the configuration of the $4$-tuple $(x_1,y_1;x_2,y_2)$.

\begin{figure}[htb]
	\centering
	\begin{minipage}{0.3\textwidth}
		\centering
		\begin{tikzpicture}[scale=1.2]
			\draw (0,0) circle [radius=1];
			\node[above right] at ( .7071, .7071) {\scriptsize $y_2$};
			\node[above left]  at (-.7071, .7071) {\scriptsize $y_1$};
			\node[below left]  at (-.7071,-.7071) {\scriptsize $x_2$};
			\node[below right] at ( .7071,-.7071) {\scriptsize $x_1$};
			\draw[thick, postaction={decorate,decoration={markings, mark=at position .53 with {\arrow{stealth}}}}] (.7071, .7071) arc [radius=1, start angle=315, end angle=225];
			\draw[thick, postaction={decorate,decoration={markings, mark=at position .53 with {\arrow{stealth}}}}] (-.7071,-.7071) arc [radius=1, start angle=135, end angle= 45];
		\end{tikzpicture}
		\caption{Antiparallel configuration}
		\label{f:antipar}
	\end{minipage}
	\hfill
	\begin{minipage}{0.3\textwidth}
		\centering
		\begin{tikzpicture}[scale=1.2]
			\draw (0,0) circle [radius=1];
			\node[above right] at ( .7071, .7071) {\scriptsize $y_1$};
			\node[above left]  at (-.7071, .7071) {\scriptsize $y_2$};
			\node[below left]  at (-.7071,-.7071) {\scriptsize $x_2$};
			\node[below right] at ( .7071,-.7071) {\scriptsize $x_1$};
			\draw[thick, postaction={decorate,decoration={markings, mark=at position .53 with {\arrow{stealth}}}}] (-.7071, .7071) arc [radius=1, start angle=225, end angle=315];
			\draw[thick, postaction={decorate,decoration={markings, mark=at position .53 with {\arrow{stealth}}}}] (-.7071,-.7071) arc [radius=1, start angle=135, end angle= 45];	
		\end{tikzpicture}
		\caption{Coparallel configuration}
		\label{f:copar}
	\end{minipage}
	\hfill
	\begin{minipage}{0.3\textwidth}
		\centering
		\begin{tikzpicture}[scale=1.2]
			\draw (0,0) circle [radius=1];
			\node[above right] at ( .7071, .7071) {\scriptsize $y_1$};
			\node[above left]  at (-.7071, .7071) {\scriptsize $x_2$};
			\node[below left]  at (-.7071,-.7071) {\scriptsize $y_2$};
			\node[below right] at ( .7071,-.7071) {\scriptsize $x_1$};
			\draw[thick, postaction={decorate,decoration={markings, mark=at position .75 with {\arrow{stealth}}}}] (-.7071,-.7071) -- ( .7071, .7071);
			\draw[thick, postaction={decorate,decoration={markings, mark=at position .75 with {\arrow{stealth}}}}] (-.7071, .7071) -- ( .7071,-.7071);
		\end{tikzpicture}
		\caption{Crossing configuration}
		\label{f:cross}
	\end{minipage}
\end{figure}

The configuration is expressed in terms of the cross-ratio as follows:

\begin{proposition}\label{p:cross_ratio_cases}
Consider a $4$-tuple $(x_1, y_1 ; x_2, y_2)$ of distinct points in $\P^1$.
Then:
\begin{itemize}
\item the configuration is \emph{antiparallel} iff $[x_1,y_1;x_2,y_2] < 0$;
\item the configuration is \emph{coparallel}   iff $0 < [x_1,y_1;x_2,y_2] < 1$;
\item the configuration is \emph{crossing}     iff $[x_1,y_1;x_2,y_2] > 1$.
\end{itemize}
\end{proposition}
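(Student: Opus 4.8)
The plan is to verify the proposition by a direct computation of the cross-ratio $[x_1,y_1;x_2,y_2]$ in terms of the angular positions of the four points on $\P^1\cong\partial\D$, and then to compare the sign and magnitude of that expression against the three possible cyclic configurations. Since both the cross-ratio and the notions ``antiparallel/coparallel/crossing'' are invariant under projective transformations of $\P^1$, I may normalize the four distinct points to any convenient position. Concretely, I would parametrize $\project{(\cos\theta,\sin\theta)}\in\P^1$ by $\theta\in\R/\pi\Z$ and use the identity $(\cos\theta_a,\sin\theta_a)\times(\cos\theta_b,\sin\theta_b)=\sin(\theta_b-\theta_a)$ for the cross-product appearing in the definition of the cross-ratio, so that
\[
[x_1,y_1;x_2,y_2]=\frac{\sin(\theta_{x_2}-\theta_{x_1})}{\sin(\theta_{y_2}-\theta_{x_1})}\cdot\frac{\sin(\theta_{y_2}-\theta_{y_1})}{\sin(\theta_{x_2}-\theta_{y_1})}\,,
\]
with all angle differences interpreted mod $\pi$ and taken, say, in $(0,\pi)$ once a cyclic ordering is fixed. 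This reduces the whole statement to elementary trigonometric inequalities.

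Next I would carry out the three cases in parallel. Fix a cyclic order $<$ on $\P^1$ and lift the four angles to a fundamental interval so that the ordering is respected; in each configuration the four points split $\P^1$ into four arcs whose lengths are positive and sum to $\pi$. For the \emph{antiparallel} configuration $x_1<y_2<y_1<x_2<x_1$, I would check that among the four sines in the displayed formula exactly one (or three) has argument exceeding $\pi$ when computed ``going forward'' from the first index to the second, hence is negative, forcing the product to be negative; this gives $[x_1,y_1;x_2,y_2]<0$. For the \emph{coparallel} case $x_1<y_1<y_2<x_2<x_1$, all four arguments lie in $(0,\pi)$ so the cross-ratio is positive, and I would then show $[x_1,y_1;x_2,y_2]<1$ by rewriting the difference $1-[x_1,y_1;x_2,y_2]$ over a common denominator and applying the sine addition formula $\sin a\sin b-\sin c\sin d$ identities, reducing it to a product of sines of the four sub-arc lengths, all positive. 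The \emph{crossing} case $x_1<y_1<x_2<y_2<x_1$ is handled the same way: again all arguments are in $(0,\pi)$ so the cross-ratio is positive, but now the analogous rearrangement of $[x_1,y_1;x_2,y_2]-1$ comes out positive, giving $[x_1,y_1;x_2,y_2]>1$. Since the three configurations are mutually exclusive and exhaust all $4$-tuples of distinct points, and since the three conditions $(-\infty,0)$, $(0,1)$, $(1,\infty)$ on the cross-ratio are likewise mutually exclusive, the ``iff'' in each bullet follows automatically once each ``only if'' is established.

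The main obstacle I anticipate is purely bookkeeping: keeping track of which sine factors are evaluated ``the short way'' versus ``the long way'' around the circle, i.e.\ correctly assigning signs to the angle differences modulo $\pi$ as the cyclic order is traversed, and making sure the lift of the four angles to $\R$ is chosen consistently in all three cases. A clean way to sidestep most of this is to use the well-known formula for the cross-ratio of four points $e^{i\alpha_1},e^{i\alpha_2},e^{i\alpha_3},e^{i\alpha_4}$ on the unit circle in terms of $\prod \sin\!\big(\tfrac{\alpha_a-\alpha_b}{2}\big)$, combined with the standard dictionary between the sign of such a product and the linking/separation of the two chords; alternatively, one can invoke the classical fact that four points on a circle are in ``linked'' position (the two chords cross) exactly when their cross-ratio is negative in the standard normalization, and then track the effect of our particular identification $e^{2\theta i}\leftrightarrow\project{(\cos\theta,\sin\theta)}$ on the sign. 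Either route turns the proof into a short verification; I would present the trigonometric computation explicitly for the coparallel case and remark that the other two are entirely analogous.
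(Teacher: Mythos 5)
Your argument is correct, but it takes a genuinely different route from the paper. The paper's proof is a two-line normalization: by a linear change of coordinates send the directions $y_1$, $x_2$, $y_2$ to those of $(1,1)$, $(1,0)$, $(0,1)$; then for $x_1=\project{(a,b)}$ one computes $[x_1,y_1;x_2,y_2]=b/a$, and the three ranges $b/a<0$, $0<b/a<1$, $b/a>1$ correspond by inspection to the three possible positions of $x_1$ among the arcs cut out by the other three points. You instead keep all four points free, parametrize them by angles, and use $u\times v=\sin(\theta_v-\theta_u)$ to write the cross-ratio as a ratio of products of sines; the sign analysis for the antiparallel case and the product-to-sum computation for the $<1$ versus $>1$ dichotomy both check out (the numerator of $1-[x_1,y_1;x_2,y_2]$ over a common denominator factors as $\sin(\theta_{x_2}-\theta_{y_2})\sin(\theta_{y_1}-\theta_{x_1})$, whose sign flips exactly when $x_2$ and $y_2$ swap their cyclic positions). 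Your closing observation--that the three configurations and the three cross-ratio ranges are each mutually exclusive and exhaustive for distinct points (the cross-ratio avoids $0$, $1$, $\infty$ precisely when the four points are distinct), so the three ``only if'' implications upgrade to ``iff''--is the right way to finish and is implicitly also what the paper relies on. The paper's normalization trick buys brevity and no trigonometry at the cost of an appeal to ``inspection''; your version is longer but every inequality is an explicit identity. The only care needed in your route is the one you flag yourself: choosing the lift of the four angles consistently with the cyclic order named in each configuration (and noting that reversing the orientation flips all four sine factors simultaneously, leaving the cross-ratio unchanged), after which the case analysis is routine.
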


\begin{proof}
With a linear change of coordinates, we can assume that the directions
$y_1$, $x_2$, $y_2$
contain the vectors $(1,1)$, $(1,0)$, $(0,1)$, respectively.
Let $(a,b)$ be a nonzero vector in the $x_1$ direction.
Then $[x_1,y_1;x_2,y_2] = b/a$.
The proposition follows by inspection.
\end{proof}

Define the following compact subsets of the torus $\P^1 \times \P^1$:
\begin{equation}\label{eq:G}
G^\star \coloneqq \big\{ (e_1(\omega), e_2(\omega)) ; \; \omega \in K^\star \big\} \, ,
\end{equation}
where $\star$ is either $\top$ or $\bot$.
The possibilities for those sets are severely restricted; indeed:

\begin{corollary}\label{c:forbid}
Let $(x_1,x_2)$, $(y_1,y_2) \in G^\star$.
Then:
\begin{itemize}
\item if $\star=\top$ then $(x_1, y_1 ; x_2, y_2)$ cannot be in coparallel configuration;
\item if $\star=\bot$ then $(x_1, y_1 ; x_2, y_2)$ cannot be in crossing configuration.
\end{itemize}
\end{corollary}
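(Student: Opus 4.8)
The plan is to read off \autoref{c:forbid} directly from \autoref{l:ineq_cross_ratio_K} and \autoref{p:cross_ratio_cases}, so that the argument is purely a dictionary between absolute values of cross-ratios and hyperbolic configurations. Given $(x_1,x_2),(y_1,y_2)\in G^\star$, the definition \eqref{eq:G} of $G^\star$ lets me pick $\xi,\eta\in K^\star$ with $x_i=e_i(\xi)$ and $y_i=e_i(\eta)$ for $i\in\{1,2\}$. Then \autoref{l:ineq_cross_ratio_K} applied to $\xi$ and $\eta$ gives $|[x_1,y_1;x_2,y_2]|\ge 1$ when $\star=\top$ and $|[x_1,y_1;x_2,y_2]|\le 1$ when $\star=\bot$.

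Next I would dispose of the degenerate situations in which the four directions fail to be distinct, so that the word ``configuration'' does not apply: by \autoref{p:nested} no $e_1$-direction coincides with any $e_2$-direction, so the only possible coincidences are $x_1=y_1$ or $x_2=y_2$; in either of these one computes straight from the definition of the cross-ratio that $[x_1,y_1;x_2,y_2]=1$, and since $(x_1,y_1;x_2,y_2)$ is then not a $4$-tuple of distinct points it lies in none of the antiparallel, coparallel, or crossing configurations, so there is nothing to exclude. This is the point at which the characterization of $K^\star$ via \eqref{eq:def_L}, and hence \autoref{p:nested}, is genuinely used.

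Assuming then that $x_1,y_1,x_2,y_2$ are distinct, I would invoke \autoref{p:cross_ratio_cases}. If $\star=\top$ and $(x_1,y_1;x_2,y_2)$ were in coparallel configuration, that proposition would force $0<[x_1,y_1;x_2,y_2]<1$, contradicting $|[x_1,y_1;x_2,y_2]|\ge 1$; if $\star=\bot$ and $(x_1,y_1;x_2,y_2)$ were in crossing configuration, it would force $[x_1,y_1;x_2,y_2]>1$, contradicting $|[x_1,y_1;x_2,y_2]|\le 1$. This proves both assertions. There is no serious obstacle here, since everything substantive is already contained in \autoref{l:ineq_cross_ratio_K}; the only step deserving a moment of attention is the degenerate-case bookkeeping just described.
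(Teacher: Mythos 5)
Your proposal is correct and follows essentially the same route as the paper, which states \autoref{c:forbid} as an immediate consequence of \autoref{l:ineq_cross_ratio_K} and \autoref{p:cross_ratio_cases} without spelling out the details. Your explicit treatment of the degenerate cases (where $x_1=y_1$ or $x_2=y_2$, the cross-ratio equals $1$ and no configuration applies, while $e_1$- and $e_2$-directions can never coincide by \autoref{p:nested}) is a point the paper leaves implicit, and it is handled correctly.
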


\begin{proof}
We can assume that $x_1$, $x_2$, $y_1$, $y_2$ are distinct, otherwise there is nothing to prove.
The claims follow by combining \autoref{l:ineq_cross_ratio_K} and \autoref{p:cross_ratio_cases}.
\end{proof}

\subsection{Each invariant direction essentially determines the other}

Now we will show that for points $\omega$ on the Mather sets,
each invariant direction $e_1(\omega)$ or $e_2(\omega)$
uniquely determines the other,
except for a countable number of bad directions.
This fact (stated precisely in \autoref{l:countable} below)
is actually a simple consequence of \autoref{c:forbid},
and forms the core of the proof of \autoref{t:zero}.

\medskip

Consider the set $G^\star$ defined by \eqref{eq:G};
we decompose it into fibers in two different ways:
$$
G^\star
= \bigcup_{x_1 \in e_1(K^\star)} \{x_1\} \times G^\star_2(x_1)
= \bigcup_{x_2 \in e_2(K^\star)} G^\star_1(x_2) \times \{x_2\} \, .
$$
Define also
\begin{align}
N^\star_1 &\coloneqq \{ x_1  \in e_1(K^\star) ; \; G^\star_2(x_1) \text{ has more than one element} \} \, , \label{eq:N1}\\
N^\star_2 &\coloneqq \{ x_2  \in e_2(K^\star) ; \; G^\star_1(x_2) \text{ has more than one element} \} \, . \label{eq:N2}
\end{align}
So the following implication holds:
\begin{equation}\label{eq:N_implication}
\left.
\begin{array}{l}
\xi, \eta \in K^\star  \\
e_i(\xi) = e_i(\eta) \notin N^\star_i \text{ for some $i$}
\end{array}
\right\}
\ \Rightarrow \
\left\{
\begin{array}{l}
e_1(\xi) = e_1(\eta) \\
e_2(\xi) = e_2(\eta)
\end{array}
\right.
\end{equation}

\begin{lemma}\label{l:countable}
For each $\star \in \{\top, \bot\}$ and $i \in \{1,2\}$,
the set $N^\star_i$ is countable.
\end{lemma}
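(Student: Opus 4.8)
The plan is to fix $\star$ (say $\star=\top$; the bottom case is symmetric) and $i=1$, and to show that the set $N_1^\top$ of ``bad'' directions $x_1 \in e_1(K^\top)$ admitting more than one partner in the fibre $G_2^\top(x_1)$ is countable. The idea is to produce, for each such $x_1$, a nonempty \emph{open} arc of $\P^1$ in a canonical way, so that distinct $x_1$'s give disjoint arcs and countability follows because $\P^1$ is second-countable (a disjoint family of nonempty open sets is at most countable).

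Here is how I would get the arc. Fix $x_1 \in N_1^\top$ and pick two distinct points $x_2 \neq x_2'$ in $G_2^\top(x_1)$, so $(x_1,x_2), (x_1,x_2') \in G^\top$. First I would argue that for \emph{any} other $(y_1,y_2)\in G^\top$ with $y_1 \neq x_1$, the point $y_1$ is constrained: applying \autoref{c:forbid} to the two pairs $(x_1,x_2)$ and $(y_1,y_2)$, and also to $(x_1,x_2')$ and $(y_1,y_2)$, rules out the coparallel configuration in both cases. Thinking of the geodesics $\overrightarrow{x_2 x_1}$, $\overrightarrow{x_2' x_1}$, $\overrightarrow{y_2 y_1}$ in the disk model (\autoref{f:antipar}, \autoref{f:copar}, \autoref{f:cross}), non-coparallelism is a strong positional restriction. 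The geodesics $\overrightarrow{x_2 x_1}$ and $\overrightarrow{x_2' x_1}$ share the ideal endpoint $x_1$ and together cut $\partial\D \setminus \{x_1\}$ into arcs; the non-coparallel condition forces $y_1$ to avoid one particular open arc determined by $x_1$, $x_2$, $x_2'$ — the arc ``between'' $x_2$ and $x_2'$ on the side not containing $x_1$, or more precisely whichever open arc $I(x_1)$ has the property that placing $y_1$ there would make $\overrightarrow{y_2 y_1}$ coparallel to one of the two fixed geodesics no matter where $y_2\in\P^1$ sits. I would check by the explicit cross-ratio computation (as in the proof of \autoref{p:cross_ratio_cases}) that such an open arc $I(x_1)$ exists and is nonempty, and that $x_1 \notin \overline{I(x_1)}$.

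The key conclusion is then: if $x_1, x_1'$ are two distinct elements of $N_1^\top$, then $x_1' \notin I(x_1)$ and $x_1 \notin I(x_1')$ (since $(x_1', x_2'')\in G^\top$ for some $x_2''$, and $x_1'$ plays the role of ``$y_1$'' above, hence must avoid $I(x_1)$), and moreover the arcs $I(x_1)$ and $I(x_1')$ are disjoint — because $I(x_1')$ lies on the far side of $x_1'$ away from $x_1$, while $I(x_1)$ lies near $x_1$, and a short case analysis with the cyclic order shows they cannot overlap. Thus $\{I(x_1) : x_1 \in N_1^\top\}$ is a family of pairwise disjoint nonempty open subsets of $\P^1 \cong S^1$, hence countable, hence $N_1^\top$ is countable. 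The symmetric argument with the roles of $e_1$ and $e_2$ swapped (using that linear maps and cross-ratios treat the two coordinates on equal footing, and that \autoref{c:forbid} is symmetric in the pair under the substitution that swaps $x_1 \leftrightarrow x_2$, $y_1 \leftrightarrow y_2$, which interchanges coparallel with itself and crossing with itself) handles $N_2^\top$, and replacing ``coparallel/cannot'' by ``crossing/cannot'' handles $\star = \bot$.

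The main obstacle I anticipate is purely bookkeeping: correctly identifying \emph{which} open arc $I(x_1)$ to associate to a bad point, verifying it is nonempty and bounded away from $x_1$, and then checking the disjointness of $I(x_1)$ and $I(x_1')$ for distinct bad points. This is a finite cyclic-order case analysis in the boundary circle $\partial\D$, best organized by normalizing coordinates (as in \autoref{p:cross_ratio_cases}) so that three of the relevant directions are $(1,1)$, $(1,0)$, $(0,1)$ and reading off the sign and size of the cross-ratio; the geometric pictures \autoref{f:antipar}, \autoref{f:copar}, \autoref{f:cross} make the configurations transparent but one must be careful that the arc is chosen consistently (independently of the auxiliary choice of the second partner $x_2'$, which amounts to noting the constraint only gets stronger with more partners). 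Once the arc is correctly pinned down, disjointness and countability are immediate.
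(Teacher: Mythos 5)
The central step of your argument does not work: the two non-coparallelism constraints coming from \autoref{c:forbid} do not forbid any arc of positions for $y_1$. Concretely, parameterize $\P^1$ cyclically and put $x_1$ at $0$, $x_2$ at $0.3$, $x_2'$ at $0.6$, so that your proposed arc $I(x_1)$ is $(0.3,0.6)$. Take $y_1=0.45$ and $y_2=0.15$. The four points $(x_1,y_1;x_2,y_2)$ occur in the cyclic order $x_1,y_2,x_2,y_1$, which is the \emph{crossing} configuration, while $(x_1,y_1;x_2',y_2)$ occur in the order $x_1,y_2,y_1,x_2'$, which is \emph{antiparallel}; neither is coparallel, so \autoref{c:forbid} is silent and your claimed property of the arc (``coparallel no matter where $y_2$ sits'') fails. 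More generally, for any fixed $y_1\neq x_1$ the set of $y_2$ making one of the two $4$-tuples coparallel is the union of the two open arcs running from $y_1$ to $x_2$, respectively to $x_2'$, on the side away from $x_1$, and this union never covers $\P^1$; so no position of $y_1$ is excluded by these two constraints. The constraint has to be placed on the \emph{second} coordinate instead: letting $I^\top(x_1)$ be the least closed arc of $\P^1\setminus\{x_1\}$ containing $G^\top_2(x_1)$, with endpoints $v,w\in G^\top_2(x_1)$, one checks that for every $z\in\Int I^\top(x_1)$ and every $y_1\neq x_1,z$ one of $(x_1,y_1;v,z)$, $(x_1,y_1;w,z)$ \emph{is} coparallel; hence $G^\top_2(y_1)$ must avoid $\Int I^\top(x_1)$, the arcs $I^\top(\cdot)$ have pairwise disjoint nonempty interiors, and countability follows from separability. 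This is the paper's argument.

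A second, independent gap: the case $\star=\bot$ is not obtained by ``replacing coparallel with crossing.'' When crossing is the forbidden configuration, the arcs $I^\bot(\cdot)$ can be \emph{nested}: with $x_1=0$, $G^\bot_2(x_1)\supset\{0.3,0.6\}$, $y_1=0.4$ and $G^\bot_2(y_1)\supset\{0.45,0.5\}$, no crossing configuration occurs, yet $I^\bot(y_1)\subset\Int I^\bot(x_1)$. So no disjoint-arc argument on the circle can succeed for $\bot$. The paper instead associates to each bad point the ideal triangle $\Delta(x_1)\subset\D$ with vertices $x_1$ and the endpoints of $I^\bot(x_1)$, and uses the no-crossing condition together with geodesic convexity of the components of $\D\setminus\Int\Delta(x_1)$ to show that these triangles have pairwise disjoint interiors in the disk; countability then follows from separability of $\D$.
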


\begin{proof}
We will consider the case $i=1$; the case $i=2$ is entirely analogous.

For each $x \in N^\star_1$, let
$I^\star(x)$ be the least closed subinterval of $\P^1 \setminus \{x\}$
containing $G^\star_2(x)$.

We begin with the case of $N^\top_1$.

\begin{claim}\label{cl:disjoint_intervals}
If $x$, $y \in N^\top_1$ are distinct
then $I^\top(x)$ and $I^\top(y)$ have disjoint interiors in the circle $\P^1$.
(See \autoref{f:intervals}.)
\end{claim}

\begin{figure}[htb]
	
	\centering
	\begin{minipage}{0.4\textwidth}
		\centering
		\begin{tikzpicture}[scale = 2]
			\draw (0,0) circle [radius=1];
					
			\draw[postaction={thick,decorate,decoration={markings, mark=at position .75 with {\arrow{stealth reversed}}}}] (cos 50, sin 50) node[above right]{\footnotesize $y$}
			arc[radius=tan 75, start angle=320, end angle=290]; 
			\draw[postaction={thick,decorate,decoration={markings, mark=at position .75 with {\arrow{stealth reversed}}}}] (cos 50, sin 50)
			arc[radius=tan 85, start angle=140, end angle=150]; 
			\draw[ultra thick] (cos 200, sin 200)
			arc[radius=1, start angle=200, end angle=240];
			\draw (cos 220, sin 220) node[below left]{\footnotesize $I^\top(y)$};

			\draw[postaction={thick,decorate,decoration={markings, mark=at position .75 with {\arrow{stealth reversed}}}}] (cos 20, -sin 20) node[right]{\footnotesize $x$}
			arc[radius=tan 50, start angle=250, end angle=170];
			\draw[postaction={thick,decorate,decoration={markings, mark=at position .75 with {\arrow{stealth reversed}}}}] (cos 20, -sin 20)
			arc[radius=tan 80, start angle=250, end angle=230];
			\draw[ultra thick] (cos 80, sin 80)
			arc[radius=1, start angle=80, end angle=140];
			\draw (cos 110, sin 110) node[above left]{\footnotesize $I^\top(x)$};

		\end{tikzpicture}
		\caption{$x \neq y \in N^\top_1$; the intervals $I^\top(x)$ and $I^\top(y)$ have disjoint interiors.}
		\label{f:intervals}
	\end{minipage}
	\hfill
	\begin{minipage}{0.4\textwidth}
		\centering
		\begin{tikzpicture}[scale = 2]
			\draw (0,0) circle [radius=1];
									
			\filldraw[pattern=horizontal lines light gray] (cos 30, sin 30)
			arc[radius=tan 60, start angle=300, end angle=240]
			arc[radius=tan 70, start angle=60, end angle=20] node[below]{\footnotesize $x$}
			arc[radius=tan 50, start angle=200, end angle=120];
			\draw (.09, .03) node{\footnotesize $\Delta(x)$};
			\draw[postaction={thick,decorate,decoration={markings, mark=at position .53 with {\arrow{stealth}}}}] (cos 150, sin 150) arc[radius=tan 70, start angle=60, end angle=20];
			\draw[postaction={thick,decorate,decoration={markings, mark=at position .53 with {\arrow{stealth}}}}] (cos 30, sin 30) arc[radius=tan 50, start angle=120, end angle=200];

			\filldraw[pattern=horizontal lines light gray] (cos 50, sin 50) node[above right]{\footnotesize $y$}
			arc[radius=tan 20, start angle=320, end angle=180]
			arc[radius=tan 25, start angle=360, end angle=230]
			arc[radius=tan 45, start angle=230, end angle=320];
			\draw (.03, .56) node{\footnotesize $\Delta(y)$};
			\draw[postaction={thick,decorate,decoration={markings, mark=at position .47 with {\arrow{stealth reversed}}}}] (cos 50, sin 50) arc[radius=tan 20, start angle=320, end angle=180];
			\draw[postaction={thick,decorate,decoration={markings, mark=at position .47 with {\arrow{stealth reversed}}}}] (cos 50, sin 50) arc[radius=tan 45, start angle=320, end angle=230];

		\end{tikzpicture}
		\caption{$x \neq y \in N^\bot_1$; the triangles $\Delta(x)$ and $\Delta(y)$ have disjoint interiors.}
		\label{f:triangles}
	\end{minipage}
\end{figure}

\begin{proof}[Proof of the claim]
Let $v$ and $w$ be the endpoints of the interval $I^\top(x)$
and take any point $z$ in its interior.
Then the geodesic $\overrightarrow{z y}$
is coparallel to one of the two geodesics
$\overrightarrow{v x}$ or $\overrightarrow{w x}$.
Since $(x,v)$ and $(x,w)$ belong to $G^\top$,
by \autoref{c:forbid} we conclude that $(y,z)$ does not.
This shows that $G^\top_2(y) \cap \Int I^\top(x) = \emptyset$,
and, in particular, $\partial I^\top(y) \cap \Int I^\top(x) = \emptyset$.
An analogous argument gives $\partial I^\top(x) \cap \Int I^\top(y) = \emptyset$.
It follows that $\Int I^\top(x) \cap \Int I^\top(y) = \emptyset$.
\end{proof}

It follows from separability of the circle that $N^\top_1$ is countable.

\medskip

Now let us consider the case of $N^\bot_1$.
For each $x \in N^\bot_1$, let $\Delta(x)$ be the ideal triangle
whose vertices are $x$ and the two endpoints of the interval $I^\bot(x)$.

\begin{claim}\label{cl:disjoint_triangles}
If $x$, $y \in N^\bot_1$ are distinct
then $\Delta(x)$ and $\Delta(y)$ have disjoint interiors in the disk $\D$.
(See \autoref{f:triangles}.)
\end{claim}

\begin{proof}[Proof of the claim]
Let $v$ and $w$ be the endpoints of the interval $I^\top(x)$.
Since these points belong to $e_2(K^\bot)$, which is disjoint from $e_1(K^\bot)$,
none of them can be equal to $y$.
Let $C$ be the connected component of $\D \setminus \Int \Delta(x)$
whose closure at infinity contains $y$.
Let $z \in G^\bot_2(y)$.
By \autoref{c:forbid}, the geodesic $\overrightarrow{z y}$
does not cross $\overrightarrow{v x}$ nor $\overrightarrow{w x}$.
It follows that $\overrightarrow{z y}$ is disjoint from $\Int \Delta(x)$,
and so it is contained in $C$.
Since $C$ is geodesically convex, it follows that $\Delta(y) \subset C$.
This proves the claim.
\end{proof}

It follows from separability of the disc that $N^\bot_1$ is also countable,
thus completing the proof of \autoref{l:countable}.
\end{proof}

\section{Obtaining zero entropy}\label{s:zero}

In this section we conclude the proof of \autoref{t:zero}.

Fix a dominated one-step cocycle with generator $(A_1, \dots, A_k)$,
and fix $\star \in \{\top,\bot\}$.
Recall the definition \eqref{eq:N1} of the set $N_1^\star$.

\begin{lemma}\label{l:g}
There exists a Borel measurable map
$g^\star \colon e_1(k^\Z) \to e_2(k^\Z)$
such that
\begin{equation}\label{eq:g_property}
\left.
\begin{array}{l}
\omega \in K^\star \\
e_1(\omega) \notin N^\star_1
\end{array}
\right\}
\ \Rightarrow \
g^\star(e_1(\omega)) = e_2(\omega) \, .
\end{equation}
\end{lemma}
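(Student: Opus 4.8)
The plan is to define $g^\star$ essentially as a measurable selection inverting the map $\omega\mapsto e_1(\omega)$ on the Mather set, exploiting that away from the countable bad set $N_1^\star$ the first direction determines the second. First I would recall from \autoref{c:factor_projections} that $e_1$ factors through $\pi_-$ via the continuous map $\tilde e_1$, and from \autoref{p:nested} that $e_2$ factors through $\pi_+$ via $\tilde e_2$; in particular $e_1(k^\Z)$ and $e_2(k^\Z)$ are compact subsets of $\P^1$, so the Borel structures involved are standard. The key input is implication \eqref{eq:N_implication} (equivalently \eqref{eq:g_property}'s hypothesis): if $\xi,\eta\in K^\star$ and $e_1(\xi)=e_1(\eta)\notin N_1^\star$, then $e_2(\xi)=e_2(\eta)$. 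Thus the partial assignment $e_1(\omega)\mapsto e_2(\omega)$, defined for $\omega\in K^\star$ with $e_1(\omega)\notin N_1^\star$, is single-valued; it only remains to extend it to a globally defined Borel map on $e_1(k^\Z)$ and to check measurability.

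Concretely, I would proceed as follows. Consider the compact set $G^\star=\{(e_1(\omega),e_2(\omega)):\omega\in K^\star\}\subset\P^1\times\P^1$ from \eqref{eq:G}, and let $\mathrm{pr}_1,\mathrm{pr}_2$ be the two coordinate projections. Over the set $e_1(K^\star)\setminus N_1^\star$ the fiber of $\mathrm{pr}_1$ is a single point by definition of $N_1^\star$, so $\mathrm{pr}_2\circ(\mathrm{pr}_1|_{G^\star})^{-1}$ is a well-defined function there; since $G^\star$ is compact (hence Borel) and $\mathrm{pr}_1|_{G^\star}$ is continuous with singleton fibers over this set, this function is Borel measurable on $e_1(K^\star)\setminus N_1^\star$ (a continuous injection between compact metric spaces has Borel inverse; restricting, one gets the claim). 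Then I would extend it to all of $e_1(k^\Z)$ by declaring $g^\star$ to equal some fixed constant point of $e_2(k^\Z)$ on the complement $e_1(k^\Z)\setminus\big(e_1(K^\star)\setminus N_1^\star\big)$; since $N_1^\star$ is countable by \autoref{l:countable} and $e_1(K^\star)$ is compact, this complement is Borel, so $g^\star$ is Borel measurable on its whole domain. Property \eqref{eq:g_property} then holds by construction: if $\omega\in K^\star$ and $e_1(\omega)\notin N_1^\star$, then $e_1(\omega)\in e_1(K^\star)\setminus N_1^\star$ and $g^\star(e_1(\omega))=\mathrm{pr}_2$ of the unique point of $G^\star$ over $e_1(\omega)$, which is $e_2(\omega)$.

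The main obstacle, modest as it is, is the measurability bookkeeping: one must be careful that "the unique point in a singleton fiber" genuinely produces a Borel function, and that removing the countable set $N_1^\star$ and extending by a constant does not destroy measurability. Both points are handled by the standard facts that a continuous bijection of compact metric spaces is a homeomorphism onto its image (so its inverse is continuous, a fortiori Borel), that restrictions of Borel maps to Borel sets are Borel, and that a map defined piecewise on countably many Borel pieces, Borel on each, is globally Borel. I do not expect to need any descriptive-set-theoretic machinery beyond this, precisely because $N_1^\star$ is countable rather than merely Borel; this is exactly why \autoref{l:countable} was proved first. The lemma is thus a short assembly step feeding into the entropy estimate of \autoref{t:zero}.
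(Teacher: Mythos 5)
Your construction coincides with the paper's: on $e_1(K^\star)\setminus N_1^\star$ send $e_1(\omega)$ to the unique second coordinate of the corresponding fiber of $G^\star$, and extend by an arbitrary constant $p\in e_2(k^\Z)$ on the (Borel) complement; property \eqref{eq:g_property} is then immediate from the definition \eqref{eq:N1} of $N_1^\star$. The only divergence is in how measurability is certified: the paper writes the graph of $g^\star$ explicitly as
$\bigl[ G^\star \setminus (N_1^\star \times \P^1) \bigr] \cup \bigl[ \bigl( e_1(k^\Z) \setminus (e_1(K^\star)\setminus N_1^\star) \bigr) \times \{p\} \bigr]$,
notes it is Borel, and invokes the criterion that a map with Borel graph is Borel (\cite[Lemma~6.7.1]{Bogachev}).

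One caution about your measurability step. The fact you cite --- a continuous injection between \emph{compact} metric spaces has continuous, hence Borel, inverse --- does not directly apply: $\mathrm{pr}_1|_{G^\star}$ is injective only over $e_1(K^\star)\setminus N_1^\star$, i.e.\ on $G^\star\setminus(N_1^\star\times\P^1)$, which is Borel but in general not compact (nor closed), since $N_1^\star$ is merely countable. So ``restricting, one gets the claim'' is where the real work hides. To close this you can use any of: the Lusin--Souslin theorem (a continuous injection on a Borel subset of a Polish space is a Borel isomorphism onto its image); the Borel-graph criterion as in the paper; or the elementary observation that for every closed $F\subset\P^1$ the preimage of $F$ under your partial map equals $\mathrm{pr}_1\bigl(G^\star\cap(\P^1\times F)\bigr)\cap\bigl(e_1(K^\star)\setminus N_1^\star\bigr)$, the intersection of a compact set with a Borel set (here single-valuedness of the fibers is what lets you replace ``the unique $x_2$ lies in $F$'' by ``some $x_2\in F$ lies in the fiber''). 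With that repair the proof is complete and is essentially the paper's.
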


\begin{proof}
Let $x_1 \in e_1(k^\Z)$.
If $x_1 \in e_1(K^\star) \setminus N_1^\star$ then
there exists a unique $x_2 \in e_2(K^\star)$
such that $(x_1,x_2) \in G^\star$;
let $g^\star(x_1) \coloneqq x_2$.
Otherwise if $x_1 \not\in e_1(K^\star) \setminus N_1^\star$ then
let $g^\star(x_1) \coloneqq p$,
where $p \in e_2(k^\Z)$ is an arbitrary constant.
Thus we have defined a map $g^\star$ that satisfies property~\eqref{eq:g_property}.
Its graph is
$$
\big[ G^\star \setminus (N_1^\star \times \P^1) \big]
\, \cup \,
\big[ \big( e_1(k^\Z) \setminus (e_1(K^\star)\setminus N_1^\star) \big) \times \{p\} \big]  \, ,
$$
and therefore is a Borel measurable subset of $e_1(k^\Z) \times e_2(k^\Z)$.
It follows (use \cite[Lemma~6.7.1]{Bogachev}) that $g^\star$ is a Borel measurable map,
as we wanted to show.
\end{proof}

By the entropy variational principle (see \cite[p.~269]{Petersen}),
in order to prove that the restriction of $T$ to the compact invariant set $K^\star$
has zero topological entropy,
it is sufficient to prove that $h_\mu(T) = 0$ for every ergodic probability measure
$\mu$ supported on $K^\star$.
Fix any such measure $\mu$.
Let us assume that $\mu$ is non-atomic, because otherwise there is nothing to prove.

\begin{lemma}\label{l:0_meas}
$\mu\left(e_1^{-1}(N^\star_1)\right) = 0$.
\end{lemma}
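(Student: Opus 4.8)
The plan is to argue by contradiction: suppose $\mu\bigl(e_1^{-1}(N_1^\star)\bigr) > 0$, and deduce a violation of the structure established in \autoref{l:countable}. The key point to exploit is that $N_1^\star$ is \emph{countable}, so if $e_1^{-1}(N_1^\star)$ carries positive $\mu$-measure, then — since $\mu$ is non-atomic — there must be a single direction $x_1 \in N_1^\star$ whose fiber $e_1^{-1}(x_1)$ already has positive measure; indeed, $e_1^{-1}(N_1^\star) = \bigsqcup_{x_1 \in N_1^\star} e_1^{-1}(x_1)$ is a countable disjoint union, so at least one term has positive measure. Call this direction $x_1$ and set $S \coloneqq e_1^{-1}(x_1) \cap K^\star$, a Borel set with $\mu(S) > 0$.

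Next I would use the factorization from \autoref{c:factor_projections}: the direction $e_1(\omega)$ depends only on the negative coordinates $\pi_-(\omega)$ via the continuous map $\tilde e_1$. Thus $S$ is determined by the condition $\pi_-(\omega) \in \tilde e_1^{-1}(x_1)$, i.e.\ $S = \pi_-^{-1}(\tilde e_1^{-1}(x_1)) \cap K^\star$. The set $S$ is therefore a ``cylinder-like'' set measurable with respect to the past $\sigma$-algebra $\mathcal{P}_- \coloneqq \pi_-^{-1}(\text{Borel sets of } k^{\mathbb{Z}_-})$. Here is where I expect the main obstacle: one must leverage the fact that all points of $S$ share the \emph{same} $e_1$-direction to force a rigidity that contradicts non-atomicity. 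The mechanism should be: for $\mu$-a.e.\ $\omega \in S$, the future $\pi_+(\omega)$ determines $e_2(\omega)$ (by \autoref{c:factor_projections} again), and since $x_1 \in N_1^\star$ the set $G^\star_2(x_1)$ of \emph{possible} second directions over points of $K^\star$ with first direction $x_1$ has more than one element — but by \emph{itself} this is not a contradiction. The real contradiction must come from combining this with the forbidden-configuration geometry: iterate or use recurrence (Poincaré recurrence, or ergodicity of $\mu$) to return to $S$ infinitely often, generating within $G^\star$ many pairs $(x_1, x_2)$ with $x_2$ ranging over a set that, by \autoref{c:forbid}, cannot be ``spread out'' — in the $\top$ case the geodesics $\overrightarrow{x_2\, x_1}$ emanating from the common endpoint $x_1$ to distinct $x_2$'s are forced into a linear order with no coparallel pairs, which constrains them to an interval, but the dynamics (forward iteration contracts $e_1$ toward a point, expands along $e_2$) would push $e_2$ across this interval, contradicting invariance of $K^\star$.

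Concretely, I would run the following argument. By ergodicity and $\mu(S) > 0$, for $\mu$-a.e.\ $\omega$ the orbit visits $S$ with positive frequency; in particular there are infinitely many $n \ge 1$ with $T^n\omega \in S$, meaning $e_1(T^n\omega) = x_1$. Fix such an $\omega$ (a generic point with $\omega \in S$) and let $n_1 < n_2 < \cdots$ be these return times. Writing $x \in e_1(\omega) \setminus\{0\}$ and $x^{(j)} \coloneqq A^{(n_j)}(\omega)x \in e_1(T^{n_j}\omega) = x_1$, all these vectors lie on the common line $x_1$. Meanwhile $e_2(T^{n_j}\omega)$ may vary, but each pair $(x_1, e_2(T^{n_j}\omega))$ lies in $G^\star$, so by \autoref{l:ineq_cross_ratio_K} / \autoref{c:forbid} the directions $\{e_2(T^{n_j}\omega)\}_j$, all paired with the fixed $x_1$, must be \emph{linearly ordered} in $\P^1 \setminus \{x_1\}$ compatibly with the cross-ratio inequality — and crucially, by \autoref{l:dom_easy} applied to the backward cocycle, consecutive differences $\angle\bigl(e_2(T^{n_{j+1}}\omega), A^{(n_{j+1}-n_j)}(T^{n_j}\omega)\, e_2(T^{n_j}\omega)\bigr)$... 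This is the delicate part, and I would instead close the argument more cleanly using \autoref{l:g}: the map $g^\star$ is $\mu$-a.e.\ defined on $e_1(K^\star) \setminus N_1^\star$ and satisfies $g^\star(e_1(\omega)) = e_2(\omega)$; if $\mu(e_1^{-1}(N_1^\star)) > 0$ then on a positive-measure set $e_2(\omega)$ is \emph{not} a measurable function of $e_1(\omega)$ — but $e_2(\omega)$ is a measurable function of $\pi_+(\omega)$ and $e_1(\omega)$ is a measurable function of $\pi_-(\omega)$, and the past and future are $\mu$-independent only if $\mu$ is a product, which it need not be; hence the right tool is that on $S = \{e_1 = x_1\}$, the conditional measure $\mu(\cdot \mid \mathcal{P}_-)$ restricted to $S$ pushes forward under $\omega \mapsto e_2(\omega)$ to a measure supported on $G^\star_2(x_1)$, and we must show this pushforward is a point mass $\mu$-a.e., contradicting $|G^\star_2(x_1)| > 1$ only after checking the pushforward actually charges more than one point — which follows because $\mu$ is non-atomic and $e_2 \circ \pi_+$ separates points of $S$ up to a $\mu$-null set (as $e_2$ together with $e_1$ determines, via the Oseledets/dominated splitting, the full orbit closure locally). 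The cleanest route, which I would ultimately write up: show directly that $g^\star(e_1(\omega)) = e_2(\omega)$ holds $\mu$-a.e.\ \emph{unconditionally} — i.e.\ even on $e_1^{-1}(N_1^\star)$ — by observing that for $\mu$-a.e.\ $\omega$, $T^n\omega$ spends a definite fraction of time in the set where $e_1 \notin N_1^\star$ (which has full measure once we prove the lemma — circular, so instead:) the honest argument uses \autoref{l:ineq_cross_ratio_K} applied to $\omega$ and $T^n\omega$ for returns $n$, forcing $e_1(T^n\omega) \to$ a single direction and hence $N_1^\star \cap \mathrm{supp}(e_1{}_*\mu)$ to be a single point, which the non-atomic ergodic $\mu$ cannot give positive mass to.

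I expect the genuine difficulty to be \textbf{identifying precisely which rigidity forces the contradiction}: the countability of $N_1^\star$ reduces everything to one offending direction $x_1$, and the dominated/Oseledets structure says $e_1(\omega)$ is measurable over the past while $e_2(\omega)$ is measurable over the future; a positive-measure fiber $\{e_1 = x_1\}$ on which $e_2$ is non-constant contradicts the fact (to be extracted from \autoref{l:ineq_cross_ratio_K} together with forward contraction of $e_1$ under the cocycle) that the pair $(e_1,e_2)$ is, up to a null set, rigidly linked on the Mather set. Once that linkage is stated correctly, the proof is two lines; pinning it down is the work.
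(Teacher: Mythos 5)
Your opening reduction is correct and matches the paper's: countability of $N_1^\star$ (\autoref{l:countable}) plus countable additivity give a single direction $x_1\in N_1^\star$ with $\mu\bigl(e_1^{-1}(x_1)\bigr)>0$, and the factorization $e_1=\tilde e_1\circ\pi_-$ together with injectivity of $\tilde e_1$ (forward NOC, \autoref{c:factor_projections}) identifies this fiber as a single past-cylinder $F=\{\omega_-\}\times k^{\Z_+}$. But from that point on you never close the argument. The routes you sketch --- cross-ratio rigidity of the $e_2$-directions over the fixed $x_1$, conditional measures on the past $\sigma$-algebra, the map $g^\star$ --- are all left incomplete, one of them you yourself flag as circular, and the final gesture is a non sequitur: knowing that $e_{1*}\mu$ gives positive mass to the single point $x_1$ does \emph{not} contradict non-atomicity of $\mu$, because the fiber $e_1^{-1}(x_1)=\{\omega_-\}\times k^{\Z_+}$ is an uncountable set of futures which a non-atomic measure could a priori charge. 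Nothing about the forbidden-configuration geometry or the pairing of $e_1$ with $e_2$ is needed here; that geometry was already spent in proving \autoref{l:countable}.

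The missing idea is purely measure-theoretic: apply Poincar\'e recurrence to the positive-measure set $F$. Some $p\ge 1$ has $T^{-p}(F)\cap F\neq\emptyset$, which forces the semi-infinite word $\omega_-$ to be $p$-periodic, and hence $T^{-p}(F)\subset F$. Invariance of $\mu$ then gives $\mu\bigl(F\setminus T^{-p}(F)\bigr)=0$, so telescoping yields $\mu\bigl(\bigcap_{n\ge 0}T^{-np}(F)\bigr)=\mu(F)>0$; but that intersection is the single periodic point $(\omega_-)^{\infty}$, contradicting non-atomicity. This is the step that converts ``a past-cylinder has positive measure'' into ``$\mu$ has an atom,'' and it is exactly what your proposal lacks.
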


\begin{proof}
By \autoref{l:countable}, the set $N^\star_1 \subset \P^1$ is countable.
Since $\sA$ has the forward NOC, it follows from \autoref{p:NOC} that the set $e_1^{-1}(N^\star_1) \subset k^\Z$
is a countable union of sets of the form $\{\omega_-\} \times k^{\Z_+}$.
Assume for a contradiction that $e_1^{-1}(N^\star_1)$ has positive measure.
Then there exists $\omega_- \in k^{\Z_-}$ such that
$F \coloneqq \{\omega_-\} \times k^{\Z_+}$ has positive measure.
By Poincar\'e recurrence, there exists $p \ge 1$ such that
$T^{-p}(F) \cap F \neq \emptyset$.
It follows that the infinite word $\omega_-$ is periodic with period $p$,
which in turn implies that $T^{-p}(F) \subset F$.
By invariance, $\mu \left(F \setminus T^{-p}(F) \right) = 0$
and
\begin{align*}
\mu \left( \bigcap_{n\ge 0} T^{-np}(F) \right)
&= \mu(F) - \mu \left(F \setminus T^{-p}(F) \right) - \mu \left(T^{-p}(F) \setminus T^{-2p}(F) \right) - \cdots \\
&= \mu(F) > 0.
\end{align*}
But the set $\bigcap_{n \ge 0} T^{-np}(F)$ is a singleton,
thus contradicting the assumption that $\mu$ is non-atomic.
This proves the lemma.
\end{proof}

As an immediate consequence of the previous two lemmas, we obtain:
\begin{equation}\label{eq:e2_determines_e1}
e_2 = g^\star \circ e_1 \quad \mu\text{-a.e.}
\end{equation}
That is, \emph{the direction $e_1$ almost surely determines $e_2$}.

Consider the continuous maps $\tilde{e_1}$, $\tilde{e_2}$ given by \autoref{c:factor_projections}.
Due to the backwards NOC,
the map $\tilde{e_2}$ is one-to-one (\autoref{p:NOC}), and therefore there exists
a unique map $f^\star$ that makes the following diagram commutative:
$$
\begin{tikzcd}[column sep=large]
k^{\Z_-}  \arrow{d}[swap]{\tilde{e}_1} \arrow{r}{f^\star}
& k^{\Z_+} \arrow{d}{\tilde{e}_2}  \\
e_1(k^\Z) \arrow{r}[swap]{g^\star} & e_2(k^\Z)
\end{tikzcd}
$$
Moreover, the map $f^\star$ is Borel measurable, and
by the commutativity relations \eqref{eq:diagrams} and
\eqref{eq:e2_determines_e1}, it satisfies:
\begin{equation}\label{eq:back_to_the_future}
\pi_+ = f^\star \circ \pi_- \quad \mu\text{-a.e.}
\end{equation}
That is, \emph{the past almost surely determines the future}.
It is known that this property implies zero entropy,
but for the reader's convenience let us spell out the details.

Let $\cC = \{C_1, \dots, C_k\}$ be the partition of $k^\Z$ into cylinders
$$
C_j \coloneqq \{\omega \in k^\Z ;\; \omega_0 = j\}.
$$
Define another partition
$\tilde \cC = \{\tilde{C}_1, \dots, \tilde{C}_k\}$
by
$$
\tilde{C}_j \coloneqq \pi_-^{-1} \left( (f^\star)^{-1} \left(\pi_+(C_j)\right) \right) \, .
$$
These are Borel sets that actually belong to the
the following $\sigma$-algebra:
$$
\cC_{-\infty}^{-1}
\coloneqq \bigvee_{n<0} T^{-n}(\cC) \, .
$$
By \eqref{eq:back_to_the_future},
$\mu(C_j \vartriangle \tilde{C}_j) = 0$,
that is $\cC = \tilde \cC$ modulo zero sets.
So, using the Kolmogorov--Sinai theorem
and other basic facts about entropy (see e.g.\ \cite{Petersen}),
we obtain:
\begin{alignat*}{2}
h_\mu(T) &= h_\mu(\cC, T)
&\quad &\text{(since $\cC$ is a generating partition)}
\\ &= h_\mu(\cC, T^{-1})
\\ &= H_\mu(\cC | \cC_{-\infty}^{-1})
\\ &= H_\mu(\tilde \cC | \cC_{-\infty}^{-1})
&\quad &\text{(since $\cC = \tilde \cC$ modulo zero sets)}
\\ &= 0
&\quad &\text{(since $\tilde \cC \subset \cC_{-\infty}^{-1}$.)}
\end{alignat*}
This proves \autoref{t:zero}.

\section{Obtaining positive entropy}\label{s:positive}

In this section we prove \autoref{t:positive}.

\subsection{Sufficient conditions for the existence of many bounded products}

\begin{lemma}\label{l:bounded_products}
Given a sequence $B_0$, $B_1$, \dots of matrices in $\SL(2,\R)$,
let $P_i \coloneqq B_{i-1} \cdots B_0$
and let $u_i$, $v_i$ be unit vectors in $\R^2$ such that $P_i u_i = \|P_i\| v_i$.
Suppose that there are constants $0 < \kappa < 1 < C$
such that
$$
\|B_i \| \le C \quad \text{and} \quad \|B_i v_i\| \le \kappa
\quad \text{for every $i$.}
$$
Then
$$
\|P_i \| \le \frac{\sqrt{2} \, C}{\sqrt{1-\kappa^2}}
\quad \text{for every $i$.}
$$
\end{lemma}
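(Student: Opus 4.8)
The plan is to control the growth of $\|P_i\|$ by tracking how the "top singular direction" $v_i$ behaves under one more step. The key geometric observation is that if $v$ is the image of the top singular vector of $P_i$ (so $P_i u_i = \|P_i\| v_i$ and $u_i$ is the top right-singular vector), then the next norm $\|P_{i+1}\|$ can be estimated by decomposing the action of $B_i$ according to how much it contracts in the direction $v_i$. Concretely, for any unit vector $w$, writing $w = \cos\theta\, u_i + \sin\theta\, u_i^\perp$ in the orthonormal singular basis of $P_i$, we have $\|P_i w\|^2 = \|P_i\|^2\cos^2\theta + \fm(P_i)^2 \sin^2\theta$; since $\det P_i = 1$ forces $\fm(P_i) = \|P_i\|^{-1}$, the vector $P_i w$ is nearly parallel to $v_i$ unless $\|P_i\|$ is already small. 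This is the mechanism by which the hypothesis $\|B_i v_i\| \le \kappa$ kicks in.

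First I would set $M_i \coloneqq \|P_i\|$ and aim for the recursive inequality
\begin{equation*}
M_{i+1}^2 \le \kappa^2 M_i^2 + C^2 .
\end{equation*}
To get this, take a unit vector $w$ with $P_{i+1} w = M_{i+1}\, (\text{unit})$, so $M_{i+1} = \|B_i P_i w\|$. Decompose $P_i w = a v_i + b v_i^\perp$ where $v_i^\perp$ is the bottom left-singular direction of $P_i$; then $|a| \le M_i \|w\|\,|\langle\text{stuff}\rangle|$ — more carefully, $\|P_i w\|^2 = a^2 + b^2$ with $|a| \le M_i$ (trivially) and $|b| \le \fm(P_i) = M_i^{-1} \le 1$, using $M_i \ge 1$ which holds since $\det P_i = 1$. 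Hence
\begin{equation*}
M_{i+1} = \|B_i(a v_i + b v_i^\perp)\| \le |a|\,\|B_i v_i\| + |b|\,\|B_i v_i^\perp\| \le \kappa M_i + C,
\end{equation*}
which is even cleaner than the squared version. This already gives $M_{i+1} \le \kappa M_i + C$, and by induction $M_i \le C/(1-\kappa)$ since $M_0 = 1 \le C/(1-\kappa)$.

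That bound $C/(1-\kappa)$ is weaker than the claimed $\sqrt2\,C/\sqrt{1-\kappa^2}$ when $\kappa$ is close to $1$, so the main obstacle is sharpening the estimate to match the stated constant. For that I would instead keep the squared quantities and be more careful: from $P_i w = a v_i + b v_i^\perp$ with $a^2 + b^2 = \|P_i w\|^2 \le M_i^2$ and separately $|b| \le M_i^{-1}$, estimate
\begin{equation*}
M_{i+1}^2 = \|B_i(a v_i + b v_i^\perp)\|^2 \le \big(|a|\,\|B_iv_i\| + |b|\,\|B_iv_i^\perp\|\big)^2.
\end{equation*}
Bounding $\|B_i v_i\| \le \kappa$ and $\|B_i v_i^\perp\| \le C$, then using $|b| \le M_i^{-1}$ together with the Cauchy–Schwarz-type split $(\kappa|a| + C|b|)^2 \le (\kappa^2 M_i^2 + C^2 b^2 M_i^2/\,\cdots)$ — the precise way to land on $M_{i+1}^2 \le \kappa^2 M_i^2 + C^2$ is to note $|a| \le M_i$ and $|b|\le 1$ give $(\kappa|a|+C|b|)^2 \le \kappa^2 a^2 + C^2 + 2\kappa C |ab|$, and then absorb the cross term; alternatively, since $a^2 \le M_i^2$ and the worst case is $b$ as large as allowed, a direct optimization over the ellipse $\{a^2+b^2 \le M_i^2,\ |b|\le M_i^{-1}\}$ yields exactly $M_{i+1}^2 \le \kappa^2 M_i^2 + C^2$. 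Granting the recursion $M_{i+1}^2 \le \kappa^2 M_i^2 + C^2$ with $M_0^2 = 1$, induction gives $M_i^2 \le C^2(1 + \kappa^2 + \kappa^4 + \cdots) + \kappa^{2i} \le \tfrac{C^2}{1-\kappa^2} + 1 \le \tfrac{2C^2}{1-\kappa^2}$ (using $C > 1 > \sqrt{1-\kappa^2}$), hence $M_i \le \sqrt2\,C/\sqrt{1-\kappa^2}$, as claimed. The only delicate point is verifying the ellipse optimization cleanly, which is elementary but is where I would be most careful about constants.
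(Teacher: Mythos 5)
Your overall strategy is sound and in fact runs parallel to the paper's: both proofs decompose vectors in the singular basis of $P_i$, exploit $\fm(P_i)=\|P_i\|^{-1}$ (valid since $\det P_i=1$), and derive a linear recursion for a squared norm, namely $M_{i+1}^2\le \kappa^2 M_i^2+C^2$ versus the paper's $\|P_{i+1}\|_{\mathrm{HS}}^2\le \kappa^2\|P_i\|_{\mathrm{HS}}^2+2C^2$. The paper sidesteps your ``delicate point'' entirely by working with the Hilbert--Schmidt norm: the identity $\|P_{i+1}\|_{\mathrm{HS}}^2=\tr\bigl(B_i^*B_i\,P_iP_i^*\bigr)=\|B_iv_i\|^2\|P_i\|^2+\gamma_i\|P_i\|^{-2}$ is additive over the orthogonal decomposition, so no cross term ever appears, and $\|P_i\|\le\|P_i\|_{\mathrm{HS}}$ finishes the job.

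The step you flagged as delicate is, as written, a genuine gap. The region $\{a^2+b^2\le M_i^2,\ |b|\le M_i^{-1}\}$ over which you propose to optimize is strictly larger than the true constraint set, and the bound $\kappa^2M_i^2+C^2$ fails over it: the maximum of $\kappa|a|+C|b|$ on that region (for $M_i$ large) is attained at the corner $a=\sqrt{M_i^2-M_i^{-2}}$, $b=M_i^{-1}$, whose square is
\begin{equation*}
\kappa^2\bigl(M_i^2-M_i^{-2}\bigr)+2\kappa C\sqrt{1-M_i^{-4}}+C^2M_i^{-2},
\end{equation*}
and the cross term $2\kappa C$ can exceed $C^2$ (take $\kappa=0.9$, $C=1.1$), so this does not yield your recursion. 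The correct constraint is the genuine ellipse you already wrote down in your opening paragraph: for a unit vector $w=\cos\theta\,u_i+\sin\theta\,u_i^\perp$ one has exactly $P_iw=M_i\cos\theta\,v_i+M_i^{-1}\sin\theta\,v_i^\perp$, i.e.\ $a=M_i\cos\theta$ and $b=M_i^{-1}\sin\theta$ are coupled, not independently bounded. With this parametrization, Cauchy--Schwarz gives
\begin{equation*}
\bigl(\kappa M_i|\cos\theta|+CM_i^{-1}|\sin\theta|\bigr)^2\le \kappa^2M_i^2+C^2M_i^{-2}\le\kappa^2M_i^2+C^2,
\end{equation*}
which is the recursion you want; your concluding induction ($M_i^2\le 1+C^2/(1-\kappa^2)\le 2C^2/(1-\kappa^2)$ using $C>1$) is then correct. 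So the proof is repairable with an ingredient you already had, but the justification of the key inequality must be rewritten using the exact ellipse rather than the disk-intersect-strip relaxation.
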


\begin{proof}
Recall that the Hilbert--Schmidt norm of a matrix $A$ is defined as
$\|A\|_\mathrm{HS} \coloneqq \sqrt{\tr A^* A}$.
If $A \in \SL(2,\R)$ then $\|A\|_\mathrm{HS}^2 = \|A\|^2 + \|A\|^{-2}$.

Let $B_i$, $P_i$, $u_i$, $v_i$, $C$ and $\kappa$ be as in the statement of the lemma.
Let $v^\perp_i$ be a unit vector orthogonal to $v_i$.
With respect to the basis $\{v_i, v_i^\perp\}$ we can write
$$
P_i P_i^* = \begin{pmatrix} \rho_i^2 & 0 \\ 0 & \rho_i^{-2} \end{pmatrix} \quad \text{and} \quad
B_i^* B_i = \begin{pmatrix} \alpha_i & \beta_i \\ \beta_i & \gamma_i \end{pmatrix} \, ,
$$
where $\rho_i = \|P_i\|$ and $\alpha_i = \langle B_i^* B_i v_i , v_i \rangle = \|B_i v_i\|^2$.
So
\begin{align*}
\|P_{i+1}\|_\mathrm{HS}^2
&=   \tr B_i^* B_i P_i P_i^*  \\
&=   \alpha_i \rho_i^2 + \gamma_i \rho_i^{-2} \\
&=   \|B_i v_i\|^2  \|P_i\|^2 + \left( \|B_i\|_\mathrm{HS}^2 - \|B_i v_i\|^2 \right) \|P_i\|^{-2} \\
&\le \|B_i v_i\|^2  \|P_i\|_\mathrm{HS}^2 + \|B_i\|_\mathrm{HS}^2 \\
&\le \kappa^2  \|P_i\|_\mathrm{HS}^2 + 2 C^2 \, .
\end{align*}
It follows by induction that $\|P_i\|_\mathrm{HS}^2 \le 2C^2/(1-\kappa^2)$ for every $i$,
which implies the lemma.
\end{proof}

Given $\sA = (A_1,\dots,A_k) \in \SL(2,\R)^k$, let $\langle \sA \rangle$ be
the semigroup generated by $\sA$, that is, the set of
all products of the form $A_{i_n}\dots A_{i_1}$ (where $n \ge 1$).

Let $\cC$ be the set of $\sA \in \SL(2,\R)^k$ such that
for every $v \in \R^2$ and $\epsilon > 0$
there exists $P \in \langle \sA \rangle$
such that $\| P v \| < \epsilon$.
It is easily seen that $\sA \in \cC$ if and only if
for every unit vector $v \in S^1$
there exists $P = A_{i_{n(v)}} \cdots A_{i_1} \in \langle \sA \rangle$
such that $\| P v \| < 1/2$.
It follows from compactness of the unit circle
that the lengths $n(v)$ can be chosen uniformly bounded,
and that $\cC$ is open.

\begin{lemma}\label{l:real_job}
Every $\sA \in \cC$ satisfies the second alternative in \autoref{t:positive}.
\end{lemma}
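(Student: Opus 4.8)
The plan is to realize $K$ as a two-sided subshift assembled from long ``contracting blocks'', keeping the matrix products under control by means of \autoref{l:bounded_products}. Since $\sA\in\cC$ and membership in $\cC$ is exactly the property of contracting every unit vector, compactness of the unit circle furnishes an integer $N\ge1$ and a constant $\kappa_0\in(0,1)$ such that every unit vector $u$ admits a word $\mathbf w(u)=(i_1,\dots,i_m)$ with $m\le N$ and $\|A_{i_m}\cdots A_{i_1}u\|\le\kappa_0$. Put $C\coloneqq\max_i\|A_i\|\ge1$ and recall that $\|B^{-1}\|=\|B\|$ for every $B\in\SL(2,\R)$. Fix an integer $\ell$ with $\kappa_0^\ell C<1$ and set $L\coloneqq1+\ell N$ and $\kappa\coloneqq\kappa_0^\ell C<1$.

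Next I would build, for each unit vector $v$ and each symbol $j\in\{1,\dots,k\}$, a word $\mathbf B(v,j)$ of length $\le L$: it begins with the letter $j$ (so that $A_j$ is applied first to $v$), followed by $\mathbf w(u_1),\dots,\mathbf w(u_\ell)$, where $u_1$ is the direction of $A_jv$ and $u_{t+1}$ is the direction of the vector obtained by applying the first $t$ contracting words to $A_jv$. Writing $B(v,j)$ for the associated matrix product, a one-line induction using the defining property of $\mathbf w$ gives $\|B(v,j)\,v\|\le\kappa_0^\ell\|A_jv\|\le\kappa_0^\ell C=\kappa<1$, while $\|B(v,j)\|\le C^L$. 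The decisive feature is that the first letter $j$ is completely free.

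I would then run the forward construction: set $P_0\coloneqq\Id$ and let $v_0$ be any unit vector; given $P_i$ with top left singular vector $v_i$, choose any $j_{i+1}\in\{1,\dots,k\}$, let $B_i\coloneqq B(v_i,j_{i+1})$, and set $P_{i+1}\coloneqq B_iP_i$. By the previous paragraph the hypotheses of \autoref{l:bounded_products} hold (with its constant ``$C$'' replaced by $C^L$ and ``$\kappa$'' by $\kappa$), so $\|P_i\|\le M_0\coloneqq\sqrt2\,C^L/\sqrt{1-\kappa^2}$ for all $i$. Concatenating the block words produces a one-sided sequence, and any product of consecutive letters of it factors as (part of one block)$\cdot(B_b\cdots B_a)\cdot$(part of one block); since $B_b\cdots B_a=P_{b+1}P_a^{-1}$ has norm $\le M_0^2$ and the outer factors are products of fewer than $L$ generators, every such product has norm $\le M\coloneqq C^{2L}M_0^2$. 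Let $\Lambda^+\subseteq k^{\Z_+}$ be the closed, shift-invariant set of all one-sided sequences whose every consecutive product has norm $\le M$; it contains all the sequences just built. Letting the choices $(j_i)$ range over $\{1,\dots,k\}^{\N}$ and noting that two choice vectors first differing at index $s$ already produce different prefixes of length $sL$ (blocks have length $\le L$), one sees that $\Lambda^+$ has at least $k^r$ admissible words of length $rL$; hence $h_{\mathrm{top}}(\sigma|_{\Lambda^+})\ge(\log k)/L>0$.

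Finally I would take $K$ to be the natural extension of $(\Lambda^+,\sigma)$, realized inside $k^\Z$ as $K\coloneqq\{\omega\in k^\Z:(\omega_m,\omega_{m+1},\dots)\in\Lambda^+\text{ for every }m\in\Z\}$. This is a compact, $T$-invariant, nonempty set, and since the natural extension preserves topological entropy, $h_{\mathrm{top}}(T|_K)=h_{\mathrm{top}}(\sigma|_{\Lambda^+})>0$. For $\omega\in K$ and $n\ge0$ one has $\|A^{(n)}(\omega)\|\le M$ because $(\omega_0,\omega_1,\dots)\in\Lambda^+$, and for $n<0$, $\|A^{(n)}(\omega)\|=\|(A^{(|n|)}(T^n\omega))^{-1}\|=\|A^{(|n|)}(T^n\omega)\|\le M$; thus the norms are uniformly bounded over $K\times\Z$, which is the second alternative of \autoref{t:positive}. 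I expect the core difficulty to be the block construction of the second paragraph: one must pack into a single block of uniformly bounded length both a genuine contraction of the relevant direction below a fixed $\kappa<1$ — so that \autoref{l:bounded_products} applies with uniform constants along the whole orbit — and a free letter to generate positive entropy; prepending an arbitrary generator and then reapplying the contracting words a fixed number of times (enough that $\kappa_0^\ell C<1$) is what reconciles these two demands. The passage to a genuinely two-sided $T$-invariant set is then routine once one goes through the natural extension.
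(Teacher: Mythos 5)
Your argument is correct and follows essentially the same route as the paper: build blocks of uniformly bounded length containing one free letter each, invoke \autoref{l:bounded_products} to bound all the partial products, and extract a subshift of bounded products with entropy at least $(\log k)/L$. The only differences are cosmetic --- the paper uses blocks of exact length $\ell$ with the free letter applied last and defines the two-sided invariant set directly, whereas you allow variable-length blocks with the free letter applied first and pass through the natural extension.
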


\begin{proof}
Fix $\sA \in \cC$. We extend the notation $A^{(n)}(\omega)$ to negative $n$'s by taking
\[
A^{(-n)}(\omega) \coloneqq \big[A^{(n)}(T^{-n}\omega)\big]^{-1} \, .
\]
Let $C \coloneqq \max \|A_i\|$.
It is an easy exercise to show that
there exist $\kappa \in (0,1)$ and an integer $\ell \ge 2$
such that for every unit vector $v \in \R^2$ there exists a product
$P \in \langle \sA \rangle$ of length $\ell-1$ such that
$\|P v \| < C^{-1}\kappa$,
and in particular
$\|A_i P v \| < \kappa$ for every $i=1,\dots,k$.
Let
$$
L \coloneqq \big\{ \omega \in k^\Z ; \; \|A^{(\ell n)}(\omega)\| \le C_1 \ \forall n \in \Z \big\} \, ,
\quad \text{where} \quad
C_1 \coloneqq \frac{\sqrt{2} \, C^{\ell}}{\sqrt{1-\kappa^2}}
$$
Then $L \neq \emptyset$; indeed, in order to find a bi-infinite word $\omega$ in $L$, we construct its future $\pi_+(\omega)$ and its past $\pi_-(\omega)$ separately, using \autoref{l:bounded_products} in a recursive way.
Moreover, given any bi-infinite sequence of
symbols $\dots, \omega_{-\ell}, \omega_0, \omega_{\ell}, \omega_{2\ell},\dots$
in the alphabet $\{1,\dots,k\}$, we can choose the remaining symbols to form a word $\omega$ in $L$.

Let
$$
K \coloneqq \big\{ \omega \in k^\Z ; \; \|A^{(n)}( T^m \omega)\| \le C^{2\ell} C_1^2  \ \forall n,m \in \Z \big\} \, .
$$
By definition, this set is compact and $T$-invariant,
and it is easy to see that it contains $L$.
It follows from the previous observations about $L$ that
the topological entropy of $K$ is at least $\ell^{-1} \log k$,
and thus positive as required.
\end{proof}

\subsection{Checking denseness}

Let $\cH$ be the set of $\sA \in \SL(2,\R)^k$ such that the one-step
cocycle generated by $\sA$ is uniformly hyperbolic.
Consider the open set $\cU \coloneqq \cH \cup \cC$.
By \autoref{l:real_job}, every element of $\cU$
satisfies one of the alternatives of \autoref{t:positive}.
Therefore, to prove the theorem, it is sufficient to show that $\cU$ is dense.

Let $\cE$ be the set of $\sA \in \SL(2,\R)^k$ for which the semigroup
$\langle \sA \rangle$ contains an elliptic element $R$
(that is, such that  $\left| \tr R \right| < 2$).
The sets $\cH$ and $\cE$ are open and pairwise disjoint.
We recall the following result:

\begin{theorem}[{\cite[Prop.~6]{Yoccoz}}]\label{t:UH_or_elliptic}
$\cH \cup \cE$ is dense in $\SL(2,\R)^k$.
\end{theorem}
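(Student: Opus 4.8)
The plan is to prove the statement in the equivalent form: the complement $\cN := \SL(2,\R)^k\setminus(\cH\cup\cE)$ has empty interior. Since $\cH$ and $\cE$ are open, $\cN$ is closed, so it suffices to rule out a nonempty open set $V\subseteq\cN$. Every $\sA\in V$ would then be simultaneously not uniformly hyperbolic and such that $\langle\sA\rangle$ contains no elliptic element, and — this is the crucial point — these two properties would be stable under every sufficiently small perturbation of $\sA$ inside $V$.

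The first step isolates the algebraic structure forced on $\langle\sA\rangle$ by this robustness. Fix $\sA\in V$ and a word $g = A_{i_n}\cdots A_{i_1}\in\langle\sA\rangle$. Perturbing the innermost letter along $A_{i_1}Y$ with $Y\in\mathfrak{sl}(2,\R)$ keeps the product inside $\SL(2,\R)$ and changes $\tr g$ by $t\,\tr(gY)+o(t)$; the linear form $Y\mapsto\tr(gY)$ on $\mathfrak{sl}(2,\R)$ vanishes identically only when $g$ is scalar, i.e.\ $g=\pm\Id$. Hence no $g\in\langle\sA\rangle\setminus\{\pm\Id\}$ can have $|\tr g|=2$, since otherwise an arbitrarily small perturbation of $\sA$ would give $|\tr g|<2$, contradicting $\sA\in V$. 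Moreover $\pm\Id\notin\langle\sA\rangle$: if some word $w$ evaluated to $\pm\Id$, replacing its outermost letter $A_j$ by $R_\epsilon A_j$, where $R_\epsilon$ is a rotation through a small angle $\epsilon\neq0$, turns $w$ into $\pm R_\epsilon$, whose trace has absolute value $2\cos\epsilon<2$ — again a forbidden elliptic element. So, for every $\sA\in V$, every element of $\langle\sA\rangle$ is loxodromic; in particular each generator acts on $\P^1$ with one attracting and one repelling fixed point, $a_i^+$ and $a_i^-$.

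The second step passes to the projective dynamics of $\Gamma:=\langle\sA\rangle$ on $\P^1$. Let $\Lambda_+$ (resp.\ $\Lambda_-$) be the closure of the set of attracting (resp.\ repelling) fixed points of the loxodromic elements of $\Gamma$; these are the minimal nonempty closed subsets of $\P^1$ invariant under all $\project{A_i}$ (resp.\ all $\project{A_i}^{-1}$), and a convexity argument in the disk model puts each $a_i^\pm$ in $\Lambda_\pm$. If $\Lambda_+$ and $\Lambda_-$ were disjoint, one could separate them by a proper multi-interval and, using the strong contraction of loxodromic maps toward their attracting points together with the technique of \autoref{l:adapted}, shrink it to a forward-invariant multicone; by \cite{ABY,BoGo} this would give $\sA\in\cH$, contrary to $\sA\in\cN$. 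Therefore $\Lambda_+\cap\Lambda_-\neq\emptyset$.

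It remains to derive a contradiction from a point $p\in\Lambda_+\cap\Lambda_-$, and this is the step I expect to be the main obstacle. Near $p$ the semigroup exhibits both strong contraction — toward attracting fixed points of loxodromic elements lying arbitrarily close to $p$ — and strong expansion — away from repelling fixed points lying arbitrarily close to $p$. Composing suitably high powers of two loxodromic elements of $\Gamma$ selected with such near-$p$ fixed points, one should be able to produce, possibly after an arbitrarily small perturbation of $\sA$ staying in $V$, a word $w$ with a semi-stable fixed point on $\P^1$, i.e.\ a parabolic element of the corresponding semigroup; this contradicts the conclusion of Step~1 that every element is loxodromic (equivalently, one further tiny perturbation pushes $|\tr w|$ below $2$, landing in $\cE$). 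Carrying this out rigorously — choosing the two elements and the exponents, controlling exactly which arcs the generators move and into what, and keeping every perturbation inside $V$ — is the combinatorial core of the argument, essentially the heart of Yoccoz's proof in \cite{Yoccoz} and close in spirit to the cone analysis of \cite{ABY}. With the contradiction reached in this last case as well, $\cN$ has empty interior, so $\cH\cup\cE$ is dense.
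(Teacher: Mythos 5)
The paper does not prove \autoref{t:UH_or_elliptic} at all: it is imported verbatim from Yoccoz \cite[Prop.~6]{Yoccoz}, so there is no internal proof to compare yours against. Judged on its own terms, your proposal correctly sets up the right framework (a robustly non-elliptic, non-hyperbolic open set $V$ would force every element of the semigroup to be loxodromic, and the forward and backward limit sets on $\P^1$ would have to meet), but it does not constitute a proof, because the decisive step is missing. Your third paragraph, which must convert a common point of $\Lambda_+$ and $\Lambda_-$ into an elliptic (or parabolic) product after an arbitrarily small perturbation, is exactly the content of Yoccoz's argument, and you explicitly defer it (``one should be able to produce\dots'', ``carrying this out rigorously\dots is the combinatorial core''). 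Without that construction nothing has been proved: the whole difficulty of the statement is concentrated there.

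There are also concrete errors in Step~1. Perturbing the generator $A_{i_1}$ changes \emph{every} occurrence of that letter in the word $g$, so the first variation of $\tr g$ is $\sum_l \tr(C_l Y)$ summed over cyclic permutations $C_l$ of $g$ ending at the various occurrences, not $\tr(gY)$; the conclusion can be salvaged when $g$ is parabolic (a sum of $\SL(2,\R)$-conjugates of a parabolic is never scalar), but that requires an argument you do not give. Worse, the exclusion of $\pm\Id$ is wrong as stated: replacing the generator $A_j$ by $R_\epsilon A_j$ does \emph{not} turn $w$ into $\pm R_\epsilon$ when $A_j$ occurs more than once in $w$, and at $w=\pm\Id$ the first derivative of the trace under this perturbation actually vanishes (all cyclic permutations equal $\pm\Id$ and $\tr J=0$), so a first-order argument cannot work there. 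One can instead argue that $\{\sA: |\tr w(\sA)|=2\}$ has empty interior because $\tr w$ is a non-constant polynomial, but this only yields loxodromy on a dense $G_\delta$ of $V$ rather than on all of $V$, which then interferes with the further perturbations you want to make in Step~3. In short: the approach is a reasonable reconstruction of the shape of Yoccoz's proof, but the heart of it is absent and the preparatory step is not correct as written.
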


Therefore, to show that $\cU \coloneqq \cH \cup \cC$ is dense in $\SL(2,\R)^k$,
we need to show:

\begin{lemma}\label{l:elliptic_helps}
$\cC \cap \cE$ is dense in $\cE$.
\end{lemma}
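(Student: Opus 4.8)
The plan is to show that given any $\sA = (A_1, \dots, A_k) \in \cE$ and any $\epsilon > 0$, there is a perturbation $\tilde{\sA} \in \cC$ within $\epsilon$ of $\sA$. By hypothesis, the semigroup $\langle \sA \rangle$ contains an elliptic element $R = A_{i_n} \cdots A_{i_1}$. An elliptic element of $\SL(2,\R)$ is conjugate to a rotation, so it has irrational or rational rotation number; after an arbitrarily small perturbation of the matrices involved we may assume the rotation number is irrational, so that $R$ is conjugate to an irrational rotation $R_\theta$ and the forward orbit $\{R^m v\}_{m \ge 0}$ of any nonzero vector $v$ is dense in its ellipse (equivalently, $\project{R}$ acts minimally on $\P^1$).

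First I would record the consequence that for an irrational elliptic $R \in \langle \sA \rangle$, the family of products $\{R^m : m \ge 1\} \subset \langle \sA \rangle$ has the property that for every unit vector $v$ there is some $m$ with $\|R^m v\|$ at most (say) $1/2$ times the minimal axis length of the invariant ellipse of $R$ — no wait, that is not quite what $\cC$ needs, since $R$ is an isometry of its ellipse and does not contract. The point of using ellipticity is different: the invariant ellipse of $R$ is generically \emph{not} a circle, so $\|R^m v\|$ oscillates, and in particular for every $v$ there is some $m$ with $\|R^m v\| < \|v\|$ \emph{unless} $v$ lies along the major axis. To genuinely contract every direction, I would compose: take a second element $S \in \langle \sA \rangle$ (for instance another conjugate $S = A_{j_1}^{-1} R A_{j_1}$-type product, or simply $A_{j} R A_{j}$, living in the semigroup) whose invariant ellipse has a different major axis from that of $R$. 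Then for any unit $v$: if $v$ is not near the major axis of $R$, some power $R^m$ strictly shrinks it; if $v$ is near the major axis of $R$, then it is away from the major axis of $S$, so some power $S^{m'}$ shrinks it. Iterating this alternation drives $\|P v\|$ below $1$ for a suitable $P \in \langle \sA \rangle$, witnessing $\sA \in \cC$.

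The cleanest way to organize this: after a small perturbation, arrange that $\langle \sA \rangle$ contains an irrational elliptic $R$ with non-circular invariant ellipse of eccentricity bounded below, and that $\langle \sA \rangle$ also contains $R' := B R B^{-1}$ for some $B \in \langle \sA \rangle$ (available since the semigroup contains words) with $B$ chosen so that $\project{B}$ moves the major axis of $R$ off itself. Then the two "pinching" maps $\{R^m\}$ and $\{(R')^{m}\}$ have the property that their bad directions (major axes) are disjoint closed subsets of $\P^1$; by compactness there is a uniform $\kappa<1$ and a finite bound $N$ such that every unit $v$ satisfies $\|R^m v\| \le \kappa$ for some $m \le N$ or $\|(R')^{m}v\| \le \kappa$ for some $m \le N$. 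Composing blocks of such pinches and using the multiplicativity argument exactly as in \autoref{l:bounded_products} (run in reverse — here we want \emph{every} direction to eventually be contracted, and since $\|P\| \le C^{\text{length}}$ is bounded on bounded-length products, repeated contraction by $\kappa$ wins) shows that for every $v$ there is $P \in \langle \sA \rangle$ with $\|Pv\| < 1$, i.e.\ $\tilde{\sA} \in \cC$. Since the perturbation was arbitrarily small and $\sA \in \cE$ arbitrary, $\cC \cap \cE$ is dense in $\cE$.

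The main obstacle is the uniformity: a single elliptic $R$ leaves its major-axis directions invariant and does not contract them, so one genuinely needs a \emph{second}, generically-positioned elliptic (or hyperbolic) element of the semigroup with a transverse bad set, and one must verify this second element can be manufactured inside $\langle \sA \rangle$ by an arbitrarily small perturbation — this is where one uses that $\cE$ is open (so the perturbed tuple stays in $\cE$) together with the freedom to adjust, say, one matrix $A_j$ slightly so that $A_j$ conjugates the major axis of $R$ away from itself, which is an open dense condition on that one matrix. Once the two-ellipse configuration with transverse bad sets is in place, the rest is the compactness/uniform-contraction bookkeeping, which is routine.
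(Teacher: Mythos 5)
Your overall strategy --- first perturb so that the semigroup $\langle\sA\rangle$ contains an element $R$ conjugate to an \emph{irrational} rotation, and then argue that such (generic) tuples lie in $\cC$ --- is the same as the paper's, which isolates the first step as \autoref{l:irrational} (proved via the trace-derivative lemma of \cite{ABY}). The gap is in the second step, namely in manufacturing the ``second contracting family'' inside the semigroup. The element $R'=BRB^{-1}$ is in general \emph{not} in $\langle\sA\rangle$: the semigroup contains only positive words in the $A_i$, not inverses, so conjugates of $R$ by arbitrary semigroup elements are unavailable. (Conjugates by the first letter of the word representing $R$ are available via cyclic permutation of that word, but you do not invoke this, nor check that the resulting exceptional direction is transverse to that of $R$.) The alternative $A_jRA_j$ does lie in the semigroup, but nothing forces it to be elliptic, and you do not analyze what its powers contract. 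This matters because the exceptional direction of $R$ cannot be handled by first displacing it with a generator and then contracting with powers of $R$: writing $R=CR_\theta C^{-1}$, the total contraction available from $\{R^m\}$ applied to any vector is bounded below by the factor $\fm(C)/\|C\|$, whereas inserting $A_j$ may expand by $\|A_j\|$, which can exceed $\|C\|/\fm(C)$; the naive alternation therefore need not drive the norm below $1$. (A smaller slip: the direction that powers of $R$ fail to contract is the one where the vector already realizes the \emph{minimum} norm on its invariant ellipse, i.e.\ the minor axis, not the major axis; this does not affect the structure of your argument.)

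The paper closes this gap more cheaply. Since $\cC$ and $\cE$ are conjugation-invariant, one may conjugate the whole tuple so that $R$ is a genuine rotation; its powers are then isometries and can transport any direction to any other at zero cost in norm. The open dense condition that not all $A_i$ commute supplies a generator $A_i$ not commuting with $R$, and the singular value decomposition of $A_i$ yields $n,m\ge 0$ such that $H=R^nA_iR^m$ is hyperbolic --- an element of the semigroup with a contracting eigendirection $s$. For any unit $v$ one applies a power of $R$ to steer $v$ near $s$ (no cost) and then a power of $H$ to contract below any $\epsilon$. If you wish to keep your two-transverse-exceptional-directions scheme instead, you must exhibit both contracting families as bona fide semigroup elements and verify, after perturbation, that their exceptional directions are distinct; as written, that step is missing.
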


Let $\cI$ be the set of $\sA \in \cE$ such that
$\langle \sA \rangle$ contains a matrix conjugate to an irrational rotation.

\begin{lemma}\label{l:irrational}
$\cI$ is dense in $\cE$.
\end{lemma}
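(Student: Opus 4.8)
The plan is to start with an arbitrary $\sA \in \cE$ and an arbitrary neighborhood $\cV$ of $\sA$, and produce an element of $\cI \cap \cV$. By definition of $\cE$, the semigroup $\langle \sA \rangle$ contains an elliptic element $R = A_{i_n} \cdots A_{i_1}$, so $R$ is conjugate (in $\GL(2,\R)$) to a rotation $R_\theta$ by some angle $\theta \in (0,\pi)$; after passing to a power we may as well think of $R$ as a genuine rotation in a suitable inner product, with rotation number $\theta/\pi$. If $\theta/\pi$ happens to be irrational, we are already done. Otherwise we need to perturb the generators slightly so that the rotation number of the corresponding word becomes irrational, while staying inside $\cV$.

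The key point is that the function $\sA \mapsto \tr\big( A_{i_n}(\sA) \cdots A_{i_1}(\sA) \big)$, which governs the conjugacy type of this particular product, is a real-analytic (in fact polynomial) function of the entries of the matrices, and it is nonconstant on every neighborhood of $\sA$ inside $\SL(2,\R)^k$: indeed one can, for instance, multiply $A_{i_1}$ on the right by a small elliptic or hyperbolic element and change the trace. First I would check that the set where this trace lies strictly between $-2$ and $2$ is open and nonempty near $\sA$ (it contains $\sA$ itself since $R$ is elliptic), hence contains an open subset $\cW \subset \cV$ on which the product stays elliptic. On $\cW$ the rotation angle $\theta(\sA')$ of the product depends continuously (even analytically) on $\sA'$, and by the trace argument it is nonconstant on $\cW$. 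Therefore $\theta(\cW)$ contains an open interval in $(0,\pi)$, which necessarily contains an angle that is an irrational multiple of $\pi$; any $\sA' \in \cW$ realizing such an angle gives a product conjugate to an irrational rotation, so $\sA' \in \cI \cap \cV$.

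The main obstacle is the bookkeeping needed to guarantee that the relevant trace function is genuinely nonconstant on an arbitrarily small neighborhood, i.e.\ that we have not accidentally chosen a word whose trace is locally constant (which cannot happen for a nontrivial product in $\SL(2,\R)$, but deserves a clean argument). This is handled by noting that right-multiplying one of the matrices $A_{i_j}$ appearing in the word by $\exp(tX)$ for a generic $X \in \mathfrak{sl}(2,\R)$ changes the trace of the product to first order in $t$; a direct computation of the derivative $\frac{d}{dt}\big|_{t=0}$ shows it is nonzero for a suitable choice of $X$, since the product is not a scalar matrix. Once nonconstancy on a neighborhood is secured, the rest is the elementary observation that a nonconstant continuous function on a connected open set has an interval in its image, and every interval contains irrational multiples of $\pi$.
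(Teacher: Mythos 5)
Your overall strategy coincides with the paper's: fix the elliptic word $P=A_{i_n}\cdots A_{i_1}$, perturb the generators so that $\tr P$ varies continuously and non-constantly inside $(-2,2)$, and then choose a parameter at which the rotation angle is an irrational multiple of $\pi$. The paper implements the perturbation by replacing every generator $A_i$ with $R_\theta A_i$ and quotes \cite[Lemma~A.4]{ABY} for the nonvanishing of $\frac{d}{d\theta}\tr\big(R_\theta A_{i_n}\cdots R_\theta A_{i_1}\big)$ at $\theta=0$; you instead right-multiply a single generator by $\exp(tX)$. The endgame (continuity of the angle, image contains an interval, intervals contain irrational multiples of $\pi$) is fine.

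The one place where your justification does not go through as stated is the non-degeneracy claim, and it is exactly the point the paper outsources to \cite{ABY}. If the letter you perturb occurs only once in the word, then $\frac{d}{dt}\big|_{t=0}\tr(\cdots)=\tr(CX)$ for a single cyclic shift $C$ of $P$, and non-scalarity of $P$ (automatic since $|\tr P|<2$ forces $P\neq\pm I$) yields an $X\in\mathfrak{sl}(2,\R)$ with $\tr(CX)\neq0$. But if the letter occurs at several positions $l$, the derivative is $\tr\big(\big(\sum_l C_l\big)X\big)$, where the $C_l$ are the cyclic shifts of the word at those occurrences; ``the product is not a scalar matrix'' does not by itself exclude that $\sum_l C_l$ is scalar, since each $C_l$ is only \emph{conjugate} to $P$. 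The gap is genuine but localized and fixable: each $C_l=G_lPG_l^{-1}$ with $G_l\in\SL(2,\R)$ is elliptic, so its off-diagonal entries satisfy $b_lc_l<0$ and $b_l-c_l=\tr(JC_l)\neq0$ where $J=\left(\begin{smallmatrix}0&-1\\1&0\end{smallmatrix}\right)$; since $G\mapsto\tr(JG PG^{-1})$ is continuous, nonvanishing, and $\SL(2,\R)$ is connected, all $b_l-c_l$ have the same sign. Hence $\tr\big(\big(\sum_l C_l\big)J\big)=\sum_l(b_l-c_l)\neq0$, so the traceless choice $X=J$ (i.e.\ right-composing the chosen generator with a small rotation) already gives a nonzero derivative. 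This sign-coherence computation is essentially the content of \cite[Lemma~A.4]{ABY}; you should either carry it out as above or cite it, since the ``direct computation'' you allude to does not cover words with repeated letters.
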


\begin{proof}
Let $(A_1,\dots,A_k) \in \cE$,
and fix an elliptic product $A_{i_n} \dots A_{i_1}$.
Let $P_\theta \coloneqq R_\theta A_{i_n} \dots R_\theta A_{i_1}$,
where $R_\theta$ denotes the rotation by angle $\theta$.
By \cite[Lemma~A.4]{ABY},
the function $\theta \mapsto \tr P_\theta$
has a nonzero derivative at $\theta=0$.
Therefore we can find $\theta_0$ arbitrarily close to $0$
such that $P_{\theta_0}$ is conjugate to an irrational rotation.
Therefore $(R_{\theta_0} A_1, \dots, R_{\theta_0} A_k) \in \cI$,
proving the lemma.
\end{proof}

\begin{proof}[Proof of \autoref{l:elliptic_helps}]
Let $\cN$ be the set of $\sA = (A_1,\dots,A_k) \in \SL(2,\R)^k$
such that not all $A_i$ commute; then $\cN$ is open and dense.
We will show that
\begin{equation}\label{eq:NIC}
\cN \cap \cI \subset \cC \, ,
\end{equation}
and so the desired result will follow from \autoref{l:irrational}.

Take $\sA = (A_1,\dots,A_k) \in \cN \cap \cI$.
Let $R \in \langle \sA \rangle$ be conjugate to an irrational rotation.
Since the sets $\cN$, $\cI$ and $\cC$ are invariant by conjugation,
we can assume that $R$ is an irrational rotation.
Since $\sA \in \cN$, there exists a generator $A_i$ that does not commute with $R$.
Using the singular value decomposition of $A_i$, we see that there exist
$n$, $m \ge 0$ such that $H \coloneqq R^n A_i R^m$ is a hyperbolic matrix.
Let $s$ be the contracting eigendirection of $H$.
Now, given any unit vector $v$ and any $\epsilon>0$,
we can find $j \ge 0$ such that the unit vector $R^j v$ is sufficiently close to $s$,
and so there exists $\ell \ge 0$ such that $\| H^\ell R^j v\| < \epsilon$.
This shows that $\sA \in \cC$,
thus proving \eqref{eq:NIC} and the lemma.
\end{proof}

As explained before, \autoref{t:positive} follows.

\medskip

Comparing to the present paper, the proof of Theorem~2 in \cite{BoBoDi} uses similar but slightly simpler arguments to get zero exponents. It does not obtain bounded norms, however. The present construction, especially \autoref{l:bounded_products}, is more related to strategy suggested on \cite[Remark~11.3]{BoBoDi}.

\appendix
\section{Complement}\label{s:appendix}

\subsection{Optimization of other dynamical quantities}

The results we have proved up to this point concern the optimization (maximization or minimization)
of the upper Lyapunov exponent $\lambda_1$.
Let us discuss briefly how to obtain results for the lower Lyapunov exponent $\lambda_2$
and for the difference $\lambda_1 - \lambda_2$ (which is a measure of non-conformality).

Suppose $T \colon X \to X$ is a continuous transformation of a
compact metric space and $A \colon X \to \GL(2,\R)$ is a continuous map.

\medskip

Define $B \colon X \to \GL(d,\R)$ by
\begin{equation}\label{eq:inverse}
B(\omega) \coloneqq A(T^{-1}\omega)^{-1},
\end{equation}
and consider it as a cocycle over $T^{-1}$.
Then a point $\omega\in X$ is Oseledets regular with respect to $(T,A)$
iff if it is regular with respect to $(T^{-1},B)$, and
$$
\lambda_1(T^{-1}, B, \omega) = -\lambda_2(T, A,\omega) \quad \text{and} \quad
\lambda_2(T^{-1}, B, \omega) = -\lambda_1(T, A,\omega) .
$$
In particular,
$$
\lambda^\top_2(T, A) = - \lambda^\bot_1 (T^{-1}, B)  \quad \text{and} \quad
\lambda^\bot_2(T, A) = - \lambda^\top_1 (T^{-1}, B) .
$$
If $(T,A)$ is an one-step cocycle then so is $(T^{-1},B)$
(after taking an appropriate conjugation between $T$ and $T^{-1}$),
and a multicone for one of them is a complementary multicone for the other.

It is then obvious how to adapt Theorems \ref{t:Mather}, \ref{t:zero} and \ref{t:positive}
to $\lambda_2$-optimization.

\medskip

Now define another matrix-valued map
\begin{equation}\label{eq:normalize}
C(\omega) \coloneqq \left|\det A(\omega)\right|^{-1/2} A(\omega).
\end{equation}
Then for all $\omega$ in a full probability set,
$$
\lambda_1(A,\omega) - \lambda_2(A,\omega)
= 2\lambda_1(C, \omega) = - 2\lambda_2(C,\omega) \, .
$$
Also note that the cocycle $(T,A)$ is dominated if and only if
$(T,C)$ is uniformly hyperbolic.
If $(T,A)$ is an one-step cocycle then so is $(T,C)$,
and a multicone for one of them is a multicone for the other.

It is then obvious how to adapt Theorems \ref{t:Mather} and \ref{t:zero}
to $(\lambda_1-\lambda_2)$-optimization.
In the converse direction, let us see $\SL(2,\R)$-cocycles,
can be adapted to cocycles taking values in
$\GL_+(2,\R)$ (the group of matrices with positive determinant)
as follows:

\begin{corollary}\label{c:positive}
Fix $k \ge 2$ and let $T$ be the full shift in $k$ symbols.
There exists an open and dense subset $\cV$ of $\GL_+(2,\R)^k$
such that for every $\sA \in \cV$,
\begin{enumerate}
\item either the one-step cocycle over $T$ generated by $\sA$ is dominated;
\item
or there exists a compact $T$-invariant set $K \subset k^\Z$
of positive topological entropy and such that the ``non-conformalities''
$\|A^{(n)}(\omega)\|/\fm(A^{(n)}(\omega))$.
are uniformly bounded over $(\omega,n) \in K \times \Z$.
\end{enumerate}
\end{corollary}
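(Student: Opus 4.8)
The plan is to reduce \autoref{c:positive} to \autoref{t:positive} via the normalization $C(\omega) = |\det A(\omega)|^{-1/2} A(\omega)$ discussed above. To a tuple $\sA = (A_1,\dots,A_k) \in \GL_+(2,\R)^k$ we associate the tuple $\Phi(\sA) \coloneqq (C_1,\dots,C_k)$ with $C_i \coloneqq (\det A_i)^{-1/2} A_i \in \SL(2,\R)$, obtaining a map $\Phi \colon \GL_+(2,\R)^k \to \SL(2,\R)^k$. First I would record the elementary dictionary between the two cocycles. Unwinding the definitions gives $C^{(n)}(\omega) = |\det A^{(n)}(\omega)|^{-1/2} A^{(n)}(\omega)$ for all $\omega$ and $n$, hence $\|C^{(n)}(\omega)\| = |\det A^{(n)}(\omega)|^{-1/2}\|A^{(n)}(\omega)\|$ and $\fm(C^{(n)}(\omega)) = |\det A^{(n)}(\omega)|^{-1/2}\fm(A^{(n)}(\omega))$. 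Since $C^{(n)}(\omega) \in \SL(2,\R)$ we have $\fm(C^{(n)}(\omega)) = \|C^{(n)}(\omega)\|^{-1}$, so
\[
\frac{\|A^{(n)}(\omega)\|}{\fm(A^{(n)}(\omega))} = \frac{\|C^{(n)}(\omega)\|}{\fm(C^{(n)}(\omega))} = \|C^{(n)}(\omega)\|^2 \, .
\]
In particular, the non-conformalities of $A$ are uniformly bounded over a set $K \times \Z$ exactly when the norms $\|C^{(n)}(\omega)\|$ are, and (as already observed in this subsection) the one-step cocycle generated by $\sA$ is dominated if and only if the one generated by $\Phi(\sA)$ is uniformly hyperbolic.

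Next I would apply \autoref{t:positive} to obtain the open and dense subset $\cU \subset \SL(2,\R)^k$, and set $\cV \coloneqq \Phi^{-1}(\cU)$. The map $\Phi$ is smooth and surjective; in fact the map $\sA \mapsto \big(\Phi(\sA),(\det A_1,\dots,\det A_k)\big)$ is a diffeomorphism of $\GL_+(2,\R)^k$ onto $\SL(2,\R)^k \times (\R_{>0})^k$ under which $\Phi$ is the projection to the first factor, so $\Phi$ is a trivial fiber bundle, in particular continuous and open. Hence $\cV$ is open and dense. For $\sA \in \cV$ we have $\Phi(\sA) \in \cU$, so $\Phi(\sA)$ satisfies one of the two alternatives of \autoref{t:positive}: if the one-step cocycle generated by $\Phi(\sA)$ is uniformly hyperbolic, then the one generated by $\sA$ is dominated, which is alternative (1) of the corollary; otherwise there is a compact $T$-invariant set $K \subset k^\Z$ of positive topological entropy with $\|C^{(n)}(\omega)\|$ uniformly bounded over $(\omega,n) \in K \times \Z$, and then by the displayed identity the non-conformalities $\|A^{(n)}(\omega)\| / \fm(A^{(n)}(\omega))$ are uniformly bounded over $(\omega,n) \in K \times \Z$, which is alternative (2).

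I do not expect a genuinely hard step here; once \autoref{t:positive} is available the argument is essentially bookkeeping. The only point deserving a moment's care is that $\Phi$ is an open map, so that $\Phi^{-1}$ preserves density — but this is immediate from the product (trivial-bundle) structure of $\Phi$, or simply from the fact that a surjective submersion is open.
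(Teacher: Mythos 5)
Your proposal is correct and follows exactly the paper's route: the paper also defines the normalization $p(A) = |\det A|^{-1/2}A$, sets $\cV$ to be the preimage of $\cU$ under the $k$-fold product of $p$, and uses the same dictionary (dominated $\leftrightarrow$ uniformly hyperbolic, non-conformality of $A^{(n)}$ $=$ $\|C^{(n)}\|^2$). Your write-up merely spells out the openness of the map and the bookkeeping identities in more detail than the paper does.
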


Notice that in the first case we have $(\lambda_1 - \lambda_2)^\bot(\sA) > 0$,
while in the second case there exists
a measure $\mu \in \cM_T$ such that
$(\lambda_1-\lambda_2)(\sA, \mu) = 0$ and moreover $h(T, \mu) > 0$.

\begin{proof}[Proof of \autoref{c:positive}]
Let $p \colon \GL_+(2,\R) \to \SL(2,\R)$
be the continuous open mapping $A \mapsto \left| \det A\right|^{-1/2} A$.
Let $\cU$ be given by \autoref{t:positive},
and define $\cV$ as the pre-image of $\cU$
by $p^k$ (the cartesian product of $k$ copies of $p$).
Then $\cV$ has the stated properties.
\end{proof}

\subsection{Alternative characterizations of extremal exponents and of domination}\label{ss:alternative}

Some remarks in the Introduction were left unjustified,
so let us deal with them now.

\medskip

First, equality \eqref{eq:alternative_top} actually holds in much greater generality,
and is related to the semiuniform subadditive ergodic theorem of Schreiber, Sturman, and Stark -- we refer the reader to \cite{Morris13} (see Theorems 2.1 and A.3) for a complete discussion.

\medskip

Next, let us prove relation \eqref{eq:alternative_bot}.
By subadditivity, its RHS equals
$$
R\coloneqq
\inf_n \frac{1}{n} \log \inf_{i_1, \dots, i_n} \| A_{i_n} \dots A_{i_1} \| \, .
$$
So $\lambda_1^\bot(\sA) \ge R$ by definition.
To check the converse inequality, fix $\epsilon>0$
and take symbols $i_1, \dots, i_n$ such that
$\frac{1}{n} \log \| A_{i_n} \dots A_{i_1} \| < R + \epsilon$.
Consider the shift-invariant probability measure on $k^\Z$ supported on the
periodic orbit $(i_1 \dots i_n)^\infty$.
Then $\lambda_1^\bot(\sA) \le \lambda_1(A,\mu) < R + \epsilon$.
Since $\epsilon$ is arbitrary, we conclude that $\lambda_1^\bot(\sA) = R$,
so proving \eqref{eq:alternative_bot}.

\medskip

Finally, let us show that an one-step $2\times 2$ cocycle is dominated
if and only if the number $(\lambda_1-\lambda_2)^\bot$
defined by \eqref{eq:diff_bot} is positive.
The ``only if'' part is evident, and actually does not require the one-step condition.
To prove the ``if'' part,
notice the equality
$$
(\lambda_1-\lambda_2)^\bot(\sA) =
\lim_{n\to \infty} \frac{1}{n} \log \inf_{i_1, \dots, i_n} \frac{\| A_{i_n} \dots A_{i_1} \|}{\fm(A_{i_n} \dots A_{i_1})} \, ,
$$
which follows from \eqref{eq:alternative_bot} applied
to the ``normalized'' one-step cocycle defined by \eqref{eq:normalize}.
So if this number is positive then
we can find positive constants $c$, $\delta$ such that \eqref{eq:domination_via_non_conformality}
holds, and therefore the cocycle is dominated.

Let us remark that for general cocycles, $(\lambda_1-\lambda_2)^\bot(A)>0$ does not imply
that the cocycle is dominated: for example $T$ can be uniquely ergodic and the cocycle can have different Lyapunov exponents without being dominated: see e.g.\ \cite[\S~4]{Herman}.

\subsection{More on the existence of optimizing measures}\label{ss:nonexistence}

Given a cocycle $(T,A)$,
the numbers $\lambda_1(A,\mu)$ and $\lambda_2(A,\mu)$ respectively
depend upper- and lower-semicontinuously on $\mu \in \cM_T$,
and therefore by compactness of $\cM_T$,
$\lambda_1$-maximizing and $\lambda_2$-minimizing measures always exist.
For a similar reason, $(\lambda_1-\lambda_2)$-maximizing measures always exist.

There are one-step cocycles where no $\lambda_1$-minimizing measure exists:
see \cite[Remark~1.2]{BMo}; a simple example is
$\sA = (H, c R_\theta)$ where $H\in \SL(2,\R)$ is hyperbolic, $\theta/\pi$ is irrational, and $c>1$.
Similarly, there are one-step cocycles where no $\lambda_2$-maximizing measure exists:
consider the same example with $c<1$ instead.

Let us give an example where no $(\lambda_1 - \lambda_2)$-minimizing measure exists.
We will actually exhibit an example of an one-step $\SL(2,\R)$-cocycle without
$\lambda_1$-minimizing measures.

Given a hyperbolic matrix $L$ in $\SL(2,\R)$, let $u_L$, $s_L \in \P^1$
denote its eigendirections, with $u_L$ corresponding to an eigenvalue of modulus bigger than $1$.
For convenience, the action of $L$ on $\P^1$ will also be denoted by $L$.

Take $A_1$, $A_2$ hyperbolic matrices in $\SL(2,\R)$
such that $\tr A_1$, $\tr A_2 > 2$ and $\tr A_1 A_2 < -2$;
then by \cite[Prop.~3.4]{ABY} there exists a cyclical order $<$ on $\P^1$ such that
$$
u_{A_2} <u_{A_2 A_1} <s_{A_2 A_1} < s_{A_1} < u_{A_1} < u_{A_1 A_2} < s_{A_1 A_2} < s_{A_2} < u_{A_2} \, .
$$
Now take a hyperbolic matrix $A_3 \in \SL(2,\R)$
such that (see Fig.~\ref{f:123}):
$$
u_{A_3} \in (s_{A_1},u_{A_1}), \quad
s_{A_3} \in (s_{A_2},u_{A_2}), \quad \text{and} \quad
A_3 u_{A_2} = s_{A_1} \, .
$$

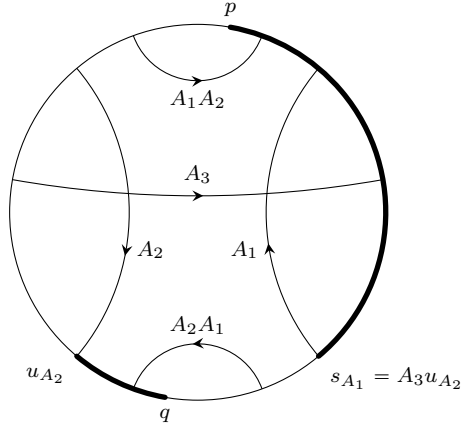
\begin{figure}[hbt]
	\begin{tikzpicture}[scale = 2.5]
		
		
		\draw (0,0) circle [radius=1];
			
		\draw[line width=2pt, line cap=round] (cos 50, -sin 50)
		node[below right]{\footnotesize $s_{A_1} = A_3 u_{A_2}$}
		arc[radius=1, start angle=-50, end angle=80]
		node[above]{\footnotesize $p$};

		\draw[line width=2pt, line cap=round] (cos 230, sin 230)
		node[below left]{\footnotesize $u_{A_2}$}
		arc[radius=1, start angle=230, end angle=260]
		node[below]{\footnotesize $q$};

		\draw[postaction={thick,decorate,decoration={markings, mark=at position .4 with {\arrow{stealth}}}}]  (cos 310, sin 310) arc[radius=tan 50, start angle=220, end angle=140];

		\draw[postaction={thick,decorate,decoration={markings, mark=at position .64 with {\arrow{stealth}}}}]  (cos 130, sin 130) arc[radius=tan 50, start angle=40, end angle=-40];

		\draw[postaction={thick,decorate,decoration={markings, mark=at position .515 with {\arrow{stealth}}}}]  (cos 170, sin 170) arc[radius=tan 80, start angle=260, end angle=280];

		\draw[postaction={thick,decorate,decoration={markings, mark=at position .53 with {\arrow{stealth}}}}] (cos 110, sin 110) arc[radius=tan 20, start angle=200, end angle=340];
		
		\draw[postaction={thick,decorate,decoration={markings, mark=at position .53 with {\arrow{stealth}}}}]  (cos 290, sin 290) arc[radius=tan 20, start angle=20, end angle=160];
	
		\draw (0,.6) node{\footnotesize $A_1 A_2$};
		\draw (0,.2) node{\footnotesize $A_3$};
		\draw (0,-.6) node{\footnotesize $A_2 A_1$};
		\draw (.25,-.2) node{\footnotesize $A_1$};
		\draw (-.25,-.2) node{\footnotesize $A_2$};
		
	\end{tikzpicture}
	\caption{The example of \autoref{p:123}. The thick part represents the (non-strictly) forward-invariant multicone $M$. For each $L$, the arrow labelled $L$ represents the hyperbolic geodesic from $s_L$ to $u_L$.}\label{f:123}
\end{figure}

\begin{proposition}\label{p:123}
The one-step cocycle generated by $\sA\coloneqq(A_1,A_2,A_3)$ has no $\lambda_1$-minimizing measure.
\end{proposition}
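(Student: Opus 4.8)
The plan is to show that $\lambda_1^\bot(\sA)=0$ and that this value is not attained by any invariant measure. First I would make rigorous the picture in \autoref{f:123}: there is a multicone $\multicone=I_1\cup I_2$, the boundary rays of $I_1$ being $s_{A_1}$ and $p$ and those of $I_2$ being $u_{A_2}$ and $q$, chosen so that $A_1(\multicone),A_3(\multicone)\subset I_1$ and $A_2(\multicone)\subset I_2$. From the prescribed cyclic order on $\P^1$, the positions of the eigendirections, and the equality $A_3u_{A_2}=s_{A_1}$, one checks that each of these images lies in $\Int\multicone$, \emph{with the single exception} $A_3u_{A_2}=s_{A_1}\in\partial\multicone$; in particular, after deleting from $\multicone$ a small neighbourhood $U$ of the ray $u_{A_2}$ one has $A_i(\multicone\setminus U)\subset\Int\multicone$ for every $i$. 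Consequently every product $A_{i_n}\cdots A_{i_1}$ maps $\multicone$ into itself, so (exactly as in the proof of \autoref{p:nested}) it is not elliptic, and $\project\multicone$ is forward-invariant for the projective cocycle.

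Since the $A_i$ lie in $\SL(2,\R)$ we have $\lambda_1(A,\mu)=-\lambda_2(A,\mu)\ge0$ for every $\mu$, hence $\lambda_1^\bot(\sA)\ge0$; for the reverse inequality it is enough, by \eqref{eq:alternative_bot}, to exhibit products of bounded norm and unbounded length. Writing $a_i>0$ for the logarithm of the larger eigenvalue of $A_i$, one uses that the action of $A_2^m$ on $\P^1$ contracts $\project\multicone$ into an $O(e^{-2a_2m})$-neighbourhood of $u_{A_2}$, that $\project{A_3}$ then carries this into an $O(e^{-2a_2m})$-neighbourhood of $s_{A_1}$ (here the hypothesis $A_3u_{A_2}=s_{A_1}$ is crucial), and that the action of $A_1^k$ with $k=\lceil(a_2/a_1)m\rceil$ keeps it inside a fixed neighbourhood of $s_{A_1}$, along which norms shrink by $e^{-a_1}$ at each step. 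A direct estimate in the spirit of \autoref{l:bounded_products} then gives $\|A_1^kA_3A_2^m\|\le C$ with $C$ independent of $m$, whence $\lambda_1^\bot(\sA)\le\lim_{m\to\infty}(m+1+k)^{-1}\log C=0$.

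Now suppose, for contradiction, that $\mu\in\cM_T$ attains $\lambda_1(A,\mu)=0$; replacing $\mu$ by an ergodic component we may take it ergodic, so that also $\lambda_2(A,\mu)=0$, and hence $n^{-1}\log\|A^{(n)}(\omega)x\|\to0$ for $\mu$-a.e.\ $\omega$ and every direction $x$. The pair $(A_1,A_2)$ alone generates a \emph{dominated} cocycle — from the cyclic order one builds a strictly forward-invariant multicone $J=J_1\cup J_2$ with $u_{A_i}\in\Int J_i$ and $s_{A_i}\notin J$ — so every invariant measure carried by $\{1,2\}^\Z$ has $\lambda_1>\lambda_2$, hence $\lambda_1>0$ because $\lambda_1=-\lambda_2$. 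Therefore $\mu(\{\omega:\omega_0=3\})>0$, so $\mu$-a.e.\ $\omega$ contains the symbol $3$ infinitely often in each direction.

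For such $\omega$, cut the orbit into blocks from $\{1,2\}^{\ast}$ separated by the $3$'s. Domination of $(A_1,A_2)$ says that a block $w$ of length $\ell$ has $\|A_w\|$ and $\fm(A_w)^{-1}$ comparable to $e^{\delta\ell/2}$ ($\delta$ the domination gap), sends every direction outside an $O(\fm(A_w)/\|A_w\|)$-neighbourhood of its incoming direction $\tilde e_2(\text{start }w)\in\project{J_{\mathrm{co}}}$ into the same order of its outgoing direction $\tilde e_1(\text{end }w)\in\project{J}$, and expands norms there by $\asymp\|A_w\|$. Since $\lambda_1(A,\mu)=0$, the expansion over each block has to be cancelled across the adjacent $3$; controlling the distortion of $\project{A_3}$ via \autoref{l:angle_derivative}, this forces, $\mu$-a.e., an \emph{exponentially sharp matching}: $\project{A_3}$ applied to the outgoing direction of a block lands within $O(e^{-\delta\ell'})$ of the incoming direction of the next block (of length $\ell'$), and symmetrically for the inverse cocycle. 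But the arc $\project{A_3}(\project{J})$ meets the complement of $\project{J}$ essentially only at $\project{A_3}(u_{A_2})=s_{A_1}$ — this is where the prescribed positions $u_{A_3}\in(s_{A_1},u_{A_1})$ and $s_{A_3}\in(s_{A_2},u_{A_2})$ are used — so the matching can be satisfied only along orbits that before every $3$ end in arbitrarily long runs of $2$'s and after every $3$ begin with arbitrarily long runs of $1$'s; hence $\mu$-a.e.\ $\omega$ has symbol $3$ of zero asymptotic frequency, i.e.\ $\mu(\{\omega_0=3\})=0$, contradicting the previous paragraph. This proves \autoref{p:123}. I expect the hard part to be precisely this last paragraph: making ``$\lambda_1(A,\mu)=0$ forces an exponentially sharp alignment across every occurrence of $3$'' into a rigorous statement, and extracting from the three geodesic axes the fact that such an alignment is possible only along orbits asymptotic in the past to $2^{\infty}$ and in the future to $1^{\infty}$ — orbits that carry no measure with $\lambda_1=0$.
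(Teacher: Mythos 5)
Your first two paragraphs are essentially sound: the multicone bookkeeping, the inequality $\lambda_1(\sA,\mu)=-\lambda_2(\sA,\mu)\ge 0$, and the bounded products $A_1^kA_3A_2^m$ with $k\asymp(a_2/a_1)m$ do give $\lambda_1^\bot(\sA)=0$ via \eqref{eq:alternative_bot}. (One caveat: \autoref{l:bounded_products} does not apply even ``in spirit'' to this word, since during the $A_2^m$-phase the most expanded image direction $v_i$ tends to $u_{A_2}$ and $\|A_2v_i\|\to e^{a_2}>1$, violating the hypothesis $\|B_iv_i\|\le\kappa<1$; a direct computation in the eigenbasis of $A_2$ and $A_1$ bounds $\|A_1^kA_3A_2^mv\|$ uniformly over unit vectors $v$, which suffices.) The reduction in your third paragraph --- $(A_1,A_2)$ alone is dominated, hence any zero-exponent ergodic measure must charge the symbol $3$ --- is also correct.

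The genuine gap is the last paragraph, which carries the entire content of the proposition, and both steps you flag as hard do fail as stated. First, $\lambda_1(\sA,\mu)=0$ controls only the \emph{Birkhoff average} of the per-block expansion; it does not force an $O(e^{-\delta\ell'})$ alignment across \emph{every} occurrence of $3$, since some blocks may over-expand a given vector provided others over-contract it. At best one gets smallness of the matching defect in an averaged logarithmic sense, and extracting even that requires an integral identity you have not set up. Second, even granting pointwise sharp matching, ``zero frequency of $3$'' does not follow: matching at precision $e^{-\delta\ell'}$ only forces the terminal run of $2$'s in one block and the initial run of $1$'s in the next to have length bounded below by a \emph{constant fraction} of the adjacent block lengths, and a configuration in which all blocks have equal length $L$ with $\ge cL$ trailing $2$'s and $\ge c'L$ leading $1$'s ($c+c'\le 1$) satisfies this while keeping the frequency of $3$ equal to $1/(L+1)>0$. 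The paper resolves exactly this difficulty by a softer mechanism: it takes the non-strictly invariant multicone $M=(s_{A_1},p)\cup(u_{A_2},q)$, notes that each $\project{A_i}$ maps $M$ into $M$ with no image component equal to a component of $M$, hence strictly contracts the Hilbert metric ($g(\omega,x)<1$ pointwise, though not uniformly --- the contraction degenerates at the heteroclinic boundary point $A_3u_{A_2}=s_{A_1}$); for any ergodic $\mu\neq\delta_{1^\infty},\delta_{2^\infty}$, a lift $\hat\mu$ to $\Omega\times\overline M$ gives zero mass to $\Omega\times\partial M$, so $\int\log g\,d\hat\mu<0$, and since $\log g-\log f$ is a coboundary for the skew product, \autoref{l:integral} yields $\lambda_1(\sA,\mu)\neq 0$. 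If you want to salvage your combinatorial approach, you would need such an integral (or Atkinson-type recurrence) identity to convert ``zero exponent'' into a fibrewise statement; the route through frequencies of the symbol $3$ does not close as written.
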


We remark that the example is on the boundary of the hyperbolic
component $H \subset \SL(2,\R)^3$ described in \cite[Prop.~4.16]{ABY}.

Before proving the proposition, let us describe a general geometrical construction.
Consider a cocycle given by
$T \colon \Omega \to \Omega$ and $A \colon \Omega \to \GL(2,\R)$.
Let $S$ be the skew-product map on $\Omega \times \P^1$ induced by the cocycle.
The derivative along the $\P^1$ fiber
of the map $S$ at a point $(\omega,x) \in \Omega \times \P^1$
is a linear map
\begin{equation}\label{eq:derivative}
L(\omega,x) \colon T_x \P^1 \to T_{A(\omega) x} \P^1 \, .
\end{equation}
Fix a (unique modulo rescaling) rotation-invariant Riemannian metric on $\P^1$,
and let
$f(\omega, x)$ denote the operator norm of $L(\omega,x)$.
(In other words, $f$ measures the rates of expansion of angle.)

Now suppose that $\mu$ is an ergodic $T$-invariant measure
and $\hat{\mu}$ is a $S$-invariant probability measure that projects to $\mu$.
Then we have the following fact (whose easy proof is left to the reader):

\begin{lemma}\label{l:integral}
If $\lambda_1(A,\mu) = 0$ then
$\int_{\Omega \times \P^1} \log f \, d\hat{\mu} = 0$.
\end{lemma}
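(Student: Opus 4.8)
The plan is to make the integrand $\log f$ explicit, to recognise the Birkhoff sums of $\log f$ along the skew-product $S$ as a telescoping quantity, and then to invoke Birkhoff's ergodic theorem for $S$ and $\hat{\mu}$.

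First I would compute $\log f$. Fixing a unit representative $\hat{x}$ of $x \in \P^1$, the fiberwise derivative $L(\omega,x)$ of $S$ maps the one-dimensional space $T_x\P^1$ to $T_{A(\omega)x}\P^1$, so its operator norm is just its length-distortion factor; since the rotation-invariant metric on $\P^1$ agrees with the angle $\angle$ up to a multiplicative constant that cancels, \autoref{l:angle_derivative} yields
\[
\log f(\omega,x) = \log|\det A(\omega)| - 2\log\|A(\omega)\hat{x}\| \, .
\]
I would record at this point that $\log f$ is continuous on the compact space $\Omega\times\P^1$, hence bounded and $\hat{\mu}$-integrable, so Birkhoff's theorem will be applicable.

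Next I would iterate: with $\hat{x}_j$ a unit representative of $\project{A^{(j)}(\omega)}x$, an immediate induction gives $A^{(n)}(\omega)\hat{x} = \big(\prod_{j=0}^{n-1}\|A(T^j\omega)\hat{x}_j\|\big)\hat{x}_n$, so the Birkhoff sums of $\log f$ telescope to
\[
\sum_{j=0}^{n-1}\log f\big(S^j(\omega,x)\big) = \log\big|\det A^{(n)}(\omega)\big| - 2\log\big\|A^{(n)}(\omega)\hat{x}\big\| \, .
\]
By Birkhoff's theorem applied to $(S,\hat{\mu},\log f)$, the integral $\int_{\Omega\times\P^1}\log f\,d\hat{\mu}$ equals the $\hat{\mu}$-average of the pointwise time average $\lim_n \tfrac1n\sum_{j<n}\log f\circ S^j$, so it remains to identify this limit. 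The first summand contributes, $\mu$-a.e., $\int\log|\det A|\,d\mu = \lambda_1(A,\mu)+\lambda_2(A,\mu)$ (using $|\det A^{(n)}| = \|A^{(n)}\|\,\fm(A^{(n)})$ and ergodicity of $\mu$); in the situation of interest, where $A$ is $\SL(2,\R)$-valued, this vanishes and moreover forces $\lambda_2(A,\mu) = -\lambda_1(A,\mu) = 0$. The second summand is controlled by the squeeze $\fm(A^{(n)}(\omega)) \le \|A^{(n)}(\omega)\hat{x}\| \le \|A^{(n)}(\omega)\|$, whose two ends have $\hat{\mu}$-a.e.\ exponential rates $\lambda_2(A,\mu)$ and $\lambda_1(A,\mu)$, both equal to $0$ under the hypothesis $\lambda_1(A,\mu)=0$. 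Hence the time average of $\log f$ is $0$ $\hat{\mu}$-a.e., and therefore $\int_{\Omega\times\P^1}\log f\,d\hat{\mu} = 0$.

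There is no serious obstacle here; the only points requiring a little care are the identification of $\|L(\omega,x)\|$ with the expression coming from \autoref{l:angle_derivative} (i.e.\ checking that the chosen rotation-invariant metric introduces no spurious factors), the telescoping bookkeeping, and the observation that in the intended application one works with $\SL(2,\R)$ matrices, so that $\lambda_1(A,\mu)=0$ automatically forces $\lambda_2(A,\mu)=0$ and the squeeze pins down the fiberwise time average; the remainder is a routine application of Birkhoff's ergodic theorem on the compact phase space $\Omega\times\P^1$.
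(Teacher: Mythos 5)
The paper gives no proof of this lemma (it is explicitly ``left to the reader''), so there is nothing to compare against; your argument is correct and is surely the intended one. The identification of $\log f(\omega,x)$ as $\log|\det A(\omega)|-2\log\|A(\omega)\hat{x}\|$ via \autoref{l:angle_derivative}, the telescoping of its Birkhoff sums along $S$ to $\log|\det A^{(n)}(\omega)|-2\log\|A^{(n)}(\omega)\hat{x}\|$, and the squeeze $\fm(A^{(n)}(\omega))\le\|A^{(n)}(\omega)\hat{x}\|\le\|A^{(n)}(\omega)\|$ combined with Birkhoff's theorem for $(S,\hat{\mu})$ all check out.

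One remark worth keeping: your observation that one must also know $\lambda_2(A,\mu)=0$ (equivalently $\int\log|\det A|\,d\mu=0$) is not cosmetic but an actual correction to the statement. Read literally in the $\GL(2,\R)$ setting in which the lemma is placed, the claim fails: for the constant cocycle $A=\mathrm{diag}(1,1/2)$ over a fixed point one has $\lambda_1(A,\mu)=0$, yet $\int\log f\,d\hat{\mu}$ ranges over the whole interval $[-\log 2,\log 2]$ as $\hat{\mu}$ runs over the invariant lifts of $\mu$ (the fiberwise time average equals $\lambda_2-\lambda_1$ off the second Oseledets direction and $\lambda_1-\lambda_2$ on it). Since the lemma is only invoked for $\SL(2,\R)$-valued cocycles in the proof of \autoref{p:123}, where $\lambda_1=0$ forces $\lambda_2=0$, your restriction to that case is exactly right.
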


\begin{proof}[Proof of \autoref{p:123}]
Let $A_1$, $A_2$, $A_3$ be as above,
and consider the one-step cocycle $(T,A)$,
where $T$ is the shift on $\Omega \coloneqq \{1,2,3\}^{\Z_+}$,
and $A \colon \Omega \to \SL(2,\R)$ is given by $A(\omega) = A_{\omega_0}$.
Let $S$ be the induced skew-product map on $\Omega \times \P^1$.

Due to the ``heteroclinic connection'' $A_3 u_{A_2} = s_{A_1}$,
the cocycle is not uniformly hyperbolic and so, as explained in \autoref{ss:alternative},
we have $\lambda_1^\bot(\sA) = 0$.
To prove the proposition we will show that
$\lambda_1(\sA,\mu) > 0$
for every ergodic $\mu \in \cM_T$.

Fix a point $q$ in the interval $(u_{A_2 A_1},s_{A_2 A_1})$.
Note that 
$$
u_{A_1 A_2} < A_1 q < A_2^{-1} q < s_{A_1 A_2} \, .
$$
In particular, we can fix a point $p$ in the interval $(A_1 q, A_2^{-1} q)$.
Let $M\coloneqq (s_{A_1},p) \cup (u_{A_2},q)$, as in Fig.~\ref{f:123}.
Then the set $M$ is forward-invariant under the projection action of
each matrix $A_i$; for example the case $i=2$ follows from the fact that $A_2 p \in (u_{A_2},q)$.

Endow each connected component of $M$
with its Riemannian Hilbert metric.
Given a point $(\omega,x) \in \Omega \times M$,
let $g(\omega,x)$ denote the operator norm of the linear map \eqref{eq:derivative},
where we take Hilbert metrics on both tangent spaces.
Since the set $A(\omega)(M)$ is contained in $M$
and none of its connected components coincided with a connected component of $M$,
we have $g(\omega,x) < 1$.

Take any ergodic $T$-invariant measure $\mu$,
and lift it to a $S$-invariant measure $\hat{\mu}$
supported on the forward $S$-invariant compact set $\Omega \times \overline{M}$.
We can assume that $\mu$ is neither $\delta_{1^\infty}$ nor $\delta_{2^\infty}$,
because otherwise $\lambda_1(\sA,\mu) >0$ trivially.
It is then easy to see that $\hat{\mu}$ gives zero weight to
the subset $\Omega \times \partial M$,
and in particular the integral $I \coloneqq \int \log g \, d\hat{\mu}$ is well-defined.
It is immediate from the definitions that $\log g - \log f$
is a coboundary with respect to $S$, and therefore $\int \log f \, d\hat{\mu} = I$.
Since $g<0$, we have $I<0$
and so \autoref{l:integral} gives
$\lambda_1(\sA,\mu) \neq 0$, as we wanted to show.
\end{proof}

\subsection{Examples of non-uniqueness of optimizing measures} \label{ss:non_uniqueness}

Let us show that in the context of \autoref{t:zero},
the Mather sets $K^\top$ and $K^\bot$ are not necessarily uniquely ergodic.
In other words, the $\lambda_1$-maximizing and $\lambda_1$-minimizing measures can fail to be unique.

Take a pair of matrices $A_1$ and $A_2$ in $\GL(2,\R)^2$
with respective eigenvalues $\chi_1(A_1) > \chi_2(A_1)$ and $\chi_1(A_2) > \chi_2(A_2)$,
all of them positive.
Let $v_j(A_i) \in \P^1$ be the eigendirection of $A_i$ corresponding to the eigenvalue $\chi_j(A_i)$.
We can choose the pair $\sA = (A_1,A_2)$ so that:
\begin{itemize}
	\item the geodesics $\overrightarrow{v_2(A_1) v_1(A_1)}$ and $\overrightarrow{v_2(A_2) v_1(A_2)}$ cross;
	\item $\sA$ has a strictly forward-invariant cone $M \subset \P^1$ with the forward nonoverlapping property;
	\item $\sA$ has a strictly backwards-invariant cone $N \subset \P^1$ with the backards nonoverlapping property.
\end{itemize}
See \autoref{f:two_tops}.

\begin{figure}[htb]
	\centering
	\begin{minipage}{0.4\textwidth}
		\centering
		\begin{tikzpicture}[scale = 2]
			\draw (0,0) circle [radius=1];

			\draw[line width=2pt, line cap=round] (cos 297, sin 297)
			arc[radius=1, start angle=297, end angle=345];
			\draw (cos 35, sin 35) node[above right]{\footnotesize $A_1(M)$};

			\draw[line width=2pt, line cap=round] (cos 15, sin 15)
			arc[radius=1, start angle=15, end angle=63];
			\draw (cos 325, sin 325) node[below right]{\footnotesize $A_2(M)$};

			\draw[line width=2pt, line cap=round] (cos 117, sin 117)
			arc[radius=1, start angle=117, end angle=165];
			\draw (cos 145, sin 145) node[above left]{\footnotesize $A_2^{-1}(N)$};

			\draw[line width=2pt, line cap=round] (cos 195, sin 195)
			arc[radius=1, start angle=195, end angle=243];
			\draw (cos 215, sin 215) node[below left]{\footnotesize $A_1^{-1}(N)$};

			\draw[postaction={thick,decorate,decoration={markings, mark=at position .75 with {\arrow{stealth}}}}] (cos 240, sin 240) node[below left]{\footnotesize $v_2(A_1)$} -- (cos 60, sin 60) node[above right]{\footnotesize $v_1(A_1)$};
			
			\draw[postaction={thick,decorate,decoration={markings, mark=at position .75 with {\arrow{stealth}}}}] (cos 120, sin 120) node[above left]{\footnotesize $v_2(A_2)$} -- (cos 300, sin 300) node[below right]{\footnotesize $v_1(A_2)$};

			\draw[postaction={thick,decorate,decoration={markings, mark=at position .53 with {\arrow{stealth}}}}] (cos 160, sin 160) node[left]{\footnotesize $e_2(\xi)$} arc[radius=tan 70, start angle=250, end angle=290] node[right]{\footnotesize $e_1(\xi)$};

			\draw[postaction={thick,decorate,decoration={markings, mark=at position .53 with {\arrow{stealth}}}}] (cos 200, sin 200) node[left]{\footnotesize $e_2(\eta)$} arc[radius=tan 70, start angle=110, end angle=70] node[right]{\footnotesize $e_1(\eta)$};

		\end{tikzpicture}
		\caption{An example with $K^\top = \{1^\infty, 2^\infty\}$.}
		\label{f:two_tops}
	\end{minipage}
	\hfill
	\begin{minipage}{0.4\textwidth}
		\centering
		\begin{tikzpicture}[scale = 2]
			\draw (0,0) circle [radius=1];
			
			\draw[line width=2pt, line cap=round] (cos 297, sin 297)
			arc[radius=1, start angle=297, end angle=345];
			\draw (cos 35, sin 35) node[above right]{\footnotesize $A_1(M)$};

			\draw[line width=2pt, line cap=round] (cos 15, sin 15)
			arc[radius=1, start angle=15, end angle=63];
			\draw (cos 325, sin 325) node[below right]{\footnotesize $A_2(M)$};

			\draw[line width=2pt, line cap=round] (cos 117, sin 117)
			arc[radius=1, start angle=117, end angle=165];
			\draw (cos 145, sin 145) node[above left]{\footnotesize $A_1^{-1}(N)$};

			\draw[line width=2pt, line cap=round] (cos 195, sin 195)
			arc[radius=1, start angle=195, end angle=243];
			\draw (cos 215, sin 215) node[below left]{\footnotesize $A_2^{-1}(N)$};

			\draw[postaction={thick,decorate,decoration={markings, mark=at position .53 with {\arrow{stealth}}}}]  (cos 120, sin 120) node[above left]{\footnotesize $v_2(A_1)$} arc[radius=tan 30, start angle=210, end angle=330] node[above right]{\footnotesize $v_1(A_1)$};
			
			\draw[postaction={thick,decorate,decoration={markings, mark=at position .53 with {\arrow{stealth}}}}] (cos 240, sin 240) node[below left]{\footnotesize $v_2(A_2)$} arc[radius=tan 30, start angle=150, 						
end angle=30] node[below right]{\footnotesize $v_1(A_2)$};
			
			\draw[postaction={thick,decorate,decoration={markings, mark=at position .75 with {\arrow{stealth}}}}] (cos 160, sin 160) node[left]{\footnotesize $e_2(\eta)$} -- (cos 340, sin 340) node[right]{\footnotesize $e_1(\eta)$};
			
			\draw[postaction={thick,decorate,decoration={markings, mark=at position .75 with {\arrow{stealth}}}}] (cos 200, sin 200) node[left]{\footnotesize $e_2(\xi)$} -- (cos 20, sin 20) node[right]{\footnotesize $e_1(\xi)$};
			
		\end{tikzpicture}
		\caption{An example with $K^\bot = \{1^\infty, 2^\infty\}$.}
		\label{f:two_bots}
	\end{minipage}
\end{figure}

\begin{claim}\label{cl:exclusion}
If $\xi$, $\eta \in K^\top$ are such that
\begin{equation}\label{eq:two_transitions}
\xi_{-1} = 1, \quad \xi_{0} = 2, \quad \eta_{-1} = 2, \quad \eta_{0} = 1.
\end{equation}
then $\xi \not\in K^\top$ or  $\eta \not\in K^\top$.
\end{claim}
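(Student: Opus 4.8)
The plan is to reach a contradiction from the geometric obstruction \autoref{c:forbid}, so I would assume that $\xi,\eta\in K^\top$ both satisfy \eqref{eq:two_transitions} and show this cannot happen. The first step is to locate the four invariant directions. By \autoref{p:nested} the direction $e_1(\xi)$ lies in $A_1(M)$ (since $\xi_{-1}=1$) and $e_1(\eta)$ lies in $A_2(M)$ (since $\eta_{-1}=2$), where $A_i(M)$ are the arcs of \autoref{f:two_tops}; applying the same nested-intersection formula to the inverse cocycle $(A_1^{-1},\dots,A_k^{-1})$ with the backwards-invariant cone $N$ gives $e_2(\xi)\in A_2^{-1}(N)$ (since $\xi_0=2$) and $e_2(\eta)\in A_1^{-1}(N)$ (since $\eta_0=1$).

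The second step is to read off the cyclic configuration of these four points on $\P^1$. Here I would use that $v_1(A_i)\in A_i(M)$ and $v_2(A_i)\in A_i^{-1}(N)$ (both follow from invariance of the cones, since $v_1(A_i)$, $v_2(A_i)$ are the attracting and repelling fixed points of $A_i$ on $\P^1$), together with the standing assumptions that the arcs $A_1(M),A_2(M)$ are disjoint and $A_1^{-1}(N),A_2^{-1}(N)$ are disjoint by the forward and backwards NOC, and that the geodesics $\overrightarrow{v_2(A_1)v_1(A_1)}$ and $\overrightarrow{v_2(A_2)v_1(A_2)}$ cross. A short case analysis then shows that the four arcs occur on $\P^1$ in the cyclic order $A_2(M),A_1(M),A_2^{-1}(N),A_1^{-1}(N)$ shown in \autoref{f:two_tops}, and hence that $e_1(\eta),e_1(\xi),e_2(\xi),e_2(\eta)$ occur in this cyclic order. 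In the terminology introduced before \autoref{p:cross_ratio_cases}, this means that the $4$-tuple $(e_1(\xi),e_1(\eta);e_2(\xi),e_2(\eta))$ is in coparallel configuration.

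The final step is to apply \autoref{c:forbid}. Since $\xi,\eta\in K^\top$, the pairs $(e_1(\xi),e_2(\xi))$ and $(e_1(\eta),e_2(\eta))$ both belong to $G^\top$, so that $4$-tuple cannot be in coparallel configuration, contradicting the previous step. Therefore $\xi$ and $\eta$ cannot both lie in $K^\top$, which is the assertion of the claim.

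I expect the only genuinely non-routine ingredient to be the determination of the cyclic order of the four arcs in the second step: among the a priori possible cyclic arrangements (two orders for the subarcs inside $M$, two for those inside $N$) the crossing condition on the two fixed-point geodesics rules out all but the one displayed in the figure. This is exactly where the crossing hypothesis is needed; without it the $4$-tuple could instead be in crossing or antiparallel configuration, on which \autoref{c:forbid} imposes no restriction. Everything else is a direct application of \autoref{p:nested} and \autoref{c:forbid}.
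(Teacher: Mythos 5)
Your argument is correct and follows essentially the same route as the paper's proof: use the nested-intersection description of the invariant directions to place $e_1(\xi),e_1(\eta),e_2(\xi),e_2(\eta)$ in the four arcs $A_1(M),A_2(M),A_2^{-1}(N),A_1^{-1}(N)$, observe that the resulting $4$-tuple is in coparallel configuration, and invoke \autoref{c:forbid}. The only difference is that where the paper simply points to \autoref{f:two_tops} for the coparallel configuration, you spell out the case analysis showing that the crossing of the eigendirection geodesics together with the nonoverlapping of the arcs forces that configuration; this is a welcome elaboration, not a different proof.
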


\begin{proof}[Proof of the claim]
The four relations in \eqref{eq:two_transitions} respectively imply:
$$
e_1(\xi) \in A_1(M), \quad e_2(\xi) \in A_2^{-1}(N), \quad e_1(\eta) \in A_2(M), \quad e_2(\eta) \in A_1^{-1}(N).
$$
It follows that the geodesics
$\overrightarrow{e_2(\xi)e_1(\xi)}$ and $\overrightarrow{e_2(\eta)e_1(\eta)}$ are coparallel
(see \autoref{f:two_tops}).
The claim now follows from \autoref{c:forbid}.
\end{proof}

Let $1^\infty$ and $2^\infty \in \{1,2\}^\Z$ be the two fixed points of the shift,
and let $\zeta^{12}$ and $\zeta^{21} \in k^\Z$ be the following ``homoclinic points'':
$$
\zeta^{12}_n =
\begin{cases}
1 &\quad\text{if $n<0$,}\\
2 &\quad\text{if $n \ge 0$,}
\end{cases}
\qquad
\zeta^{21}_n =
\begin{cases}
2 &\quad\text{if $n<0$,}\\
1 &\quad\text{if $n \ge 0$.}
\end{cases}
$$
It follows from \autoref{cl:exclusion} that
$K^\top$ is contained in the closure of the orbit of either $\zeta^{12}$ or $\zeta^{21}$.
Since $K^\top$ equals the union of supports of the invariant probability measures
that give full weight to $K^\top$ itself,
it follows that $K^\top \subset \{1^\infty, 2^\infty\}$.

Of course we can choose $A_1$, $A_2$ such that additionally
$\chi_1(A_1) = \chi_1(A_2)$; in this case $K^\top$ equals $\{1^\infty, 2^\infty\}$
and so it is not uniquely ergodic.

In a very similar way we produce an example where $K^\bot = \{1^\infty, 2^\infty\}$.
The only difference is that $\sA = (A_1,A_2)$ are chosen so that
the geodesics $\overrightarrow{v_2(A_1) v_1(A_1)}$ and $\overrightarrow{v_2(A_2) v_1(A_2)}$ are coparallel,
and so if the points $\xi$, $\eta$ satisfy \eqref{eq:two_transitions}
then the geodesics $\overrightarrow{e_2(\xi)e_1(\xi)}$ and $\overrightarrow{e_2(\eta)e_1(\eta)}$ cross.
(See \autoref{f:two_bots}.)

\subsection{Open questions and directions for future research}\label{ss:open}

There are several different directions along which one could try to extend the results of this paper.

Notice that the NOC is indeed necessary for the validity of \autoref{t:zero};
an example is given in \autoref{r:NOCs} for $\alpha=\beta$.
However all the examples we know are very non-generic.
So we ask whether the NOC can be replaced by a weaker condition,
preferably one that is ``typical'' (open and dense) among $k$-tuples of matrices that generate
dominated cocycles.

\medskip

Regarding more general cocycles,
we remark that there is also a notion of multicones for one-step cocycles over
subshifts of finite type: see \cite{ABY}.
It seems to be straightforward to adapt the arguments given here
to that more general situation
(and thus also for $n$-step cocycles) with appropriate nonoverlapping conditions,
but we have not checked the details.

Even more generally, we would like to have results about Lyapunov-optimizing measures
for cocycles that are not locally constant.
We believe that some of the construction of this paper should extend
to cocycles admitting unstable and stable holonomies (over a hyperbolic base dynamics).

\medskip

Let us return to one-step cocycles over the full shift.
A possible strengthening of the conclusions of \autoref{t:zero}
would be to replace subexponential complexity (zero entropy)
by linear complexity (as in \cite{BM,JP}),
or polynomial complexity (as in \cite{HMS}).
Perhaps under generic conditions we can even
obtain bounded complexity (periodic orbits), in the style of \cite{Contreras}.

\medskip

Another line of study is to consider a relative Lyapunov-optimization problem for one-step cocycles
where the frequencies of each matrix are fixed.
The paper \cite{JS90} deals with a problem which can be reformulated in this terms.
See \cite{GaLo} for general results on relative optimization in the classical commutative setting.
Let us also remark that this relative optimization setting is natural in the context of Lagrangian dynamics, where it corresponds to fixing the homology; see \cite{Mather}.

It should also be worthwhile to investigate the relations between Lyapunov-optimizing results
as ours and the geometry of Riemann surfaces.

\medskip

Regarding non-dominated one-step  $\SL(2,\R)$-cocycles,
\autoref{t:positive} says that we should not expect $\lambda_1$-minimizing measures
to have zero entropy.
However, it seems likely that $\lambda_1$-maximizing measures should have zero entropy.
Notice that the corresponding Mather set (whose existence is given by \cite{Morris13})
is automatically uniformly hyperbolic.

Let us also remark that the only examples of $k$-tuples of matrices
that do not satisfy the dichotomy of \autoref{t:positive} (or \autoref{c:positive})
are very particular ones (e.g., appropriate $k$-tuples with a common invariant direction).
So we ask whether these counterexamples can be described explicitly,
or at least whether they are contained in a finite union of submanifolds of positive codimension.

\medskip

Of course most of the concepts and questions discussed in this paper make sense in higher dimension.
In particular, we ask whether a higher-dimensional version of our zero entropy \autoref{t:zero}
(stated in terms of domination of index $1$) holds true.
As mentioned above, the construction of Barabanov functions can be adapted to this situation:
see \cite[\S~2.2]{BMo}.
\autoref{l:ineq_cross_ratio_J} should also be possible to extend:
compare with \cite[Prop.~2.6]{BM}.
However, the rest of our proof relies on low-dimensional arguments.

\medskip

Finally, we remark that the results obtained here can be considered as
part of the multifractal analysis of Lyapunov exponents of linear cocycles,
a broad field of study launched essentially by Feng~\cite{Feng}.

\bigskip
\begin{ack}
We thank Artur Avila, Gonzalo Contreras, Oliver Jenkinson, Artur O.~Lopes,
Ian D.~Morris, Eduardo Garibaldi, Mark Pollicott, and Philippe Thieullen
for enlightening discussions.
We also thank two referees for their careful reading, and important corrections and suggestions.
The first named author acknowledges the hospitality of IMPAN and the support of CNPq (Brazil), FAPERJ (Brazil), Fondecyt 1140202 (Chile), and the Center of Dynamical Systems and Related Fields (Chile).
The second named author was hosted by PUC-Rio and supported by MNiSW grant
N201~607640 (Poland).
Both authors were also supported by EU~BREUDS.
\end{ack}


\end{document}